\tikzset{l/.style={font=\fontsize{8}{8}\selectfont}}
\definecolor{salmon}{rgb}{1,0.47,0.425}
\newcommand{\Vect}{{\rm Vect}}
\newcommand{\FinVect}{{\rm FinVect}}
\newcommand{\Span}{{\rm Span}}
\newcommand{\SES}{{\rm SES}}
\newcommand{\Rep}{{\rm Rep}}
\newcommand{\Gpd}{{\rm Gpd}}
\newcommand{\Ext}{{\rm Ext}}
\newcommand{\Hom}{{\rm Hom}}
\newcommand{\im}{{\rm im}}
\renewcommand{\hom}{{\rm hom}}
\renewcommand{\to}{\rightarrow}
\newcommand{\tensor}{\otimes}
\newcommand{\ten}{\otimes }
\newcommand{\maps}{\colon}
\newcommand{\id}{{\rm id}}
\newcommand{\iso}{\cong}
\newcommand{\g}{{\mathfrak g}}
\newcommand{\R}{{\mathbb R}}
\newcommand{\C}{{\mathbb C}}
\newcommand{\CC}{{\mathcal C}}
\newcommand{\F}{{\mathbb F}}
\newcommand{\N}{\ensuremath{\mathbb{N}}\xspace}
\newcommand{\Q}{{\mathbb Q}}
\newcommand{\Z}{\ensuremath{\mathbb{Z}}\xspace}
\newcommand{\Aut}{{\rm Aut}}
\newcommand{\EXT}{{\rm EXT}}
\newcommand{\Hall}{\mathcal{H}}
\newcommand{\aut}{{\rm aut}}
\DeclareMathOperator{\Mor}{\ensuremath{\mathrm{Mor}}\xspace}
\newcommand{\comment}[1]{}
\renewcommand{\u}[1]{\underline{#1}}
\newtheorem{theorem}{Theorem}
\newtheorem{definition}[theorem]{Definition}
\newtheorem{defn}[theorem]{Definition}
\newtheorem{lemma}[theorem]{Lemma}
\newtheorem{proposition}[theorem]{Proposition}
\newtheorem{example}[theorem]{Example}
\newtheorem{conjecture}[theorem]{Conjecture}
\newtheorem*{theorem*}{Theorem}
\newtheorem*{definition*}{Definition}
\newtheorem*{lemma*}{Lemma}
\newtheorem*{corollary*}{Corollary}
\newtheorem*{proposition*}{Proposition}
\newtheorem*{example*}{Example}
\newtheorem*{conjecture*}{Conjecture}
\newtheorem*{remark*}{Remark}
\newtheorem*{notation*}{Notation}
\newtheorem*{convention*}{Convention}
\begin{document}
\sloppy
\begin{frontmatter}
\author{Christopher D. Walker}
\title{A Categorification of Hall Algebras}
\campus{Riverside}
\degree{Doctor of Philosophy}
\field{Mathematics}
\degreeyear{2011}
\degreemonth{June}
\chair{Dr. John Baez}
\othermembers{Dr. Vijayanthi Chari\\Dr. Julia Bergner}
\numberofmembers{3}
\maketitle
\copyrightpage{}
\approvalpage{}
\begin{dedication}
\begin{center}
To\\ 
Nela,\\
Mikaela, \\
Taylor, \\
and Avery
\end{center}
\end{dedication}
\begin{abstract}
In recent years, there has been great interest in the study of categorification, specifically as it applies to the theory of quantum groups. In this thesis, we would like to provide a new approach to this problem by looking at Hall algebras. It is know, due to Ringel, that a Hall algebra is isomorphic to a certain quantum group. It is our goal to describe a categorification of Hall algebras as a way of doing so for their related quantum groups. To do this, we will take the following steps. First, we describe a new perspective on the structure theory of Hall algebras. This view solves, in a unique way, the classic problem of the multiplication and comultiplication not being compatible. Our solution is to switch to a different underlying category $\Vect^K$ of vector spaces graded by a group $K$ called the Grothendieck group. We equip this category with a nontrivial braiding which depends on the $K$-grading. With this braiding and a given antipode, we find that the Hall algebra does become a Hopf algebra object in $\Vect^K$. Second, we will describe a categorification process, call `groupoidification', which replaces vector spaces with groupoids and linear operators with `spans' of groupoids. We will use this process to construct a braided monoidal bicategory which categorifies $\Vect^K$ via the groupoidification program. Specifically, graded vector spaces will be replaced with groupoids `over' a fixed groupoid related to the Grothendieck group $K$. The braiding structure will come from an interesting groupoid $\EXT(M,N)$ which will behave like the Euler characteristic for the Grothendieck group $K$. We will finish with a description of our plan to, in future work, apply the same concept to the structure maps of the Hall algebra, which will eventually give us a Hopf $2$-algebra object in our braided monoidal bicategory. 
\end{abstract}

\newpage
\tableofcontents
\end{frontmatter}
\chapter{Introduction}
In this thesis we will categorify `half' of the quantum group associated to a simply-laced Dynkin diagram, meaning $U^+_q(\g)$ where $\g$ is the Lie algebra corresponding to that Dynkin diagram. It is known that $U^+_q(\g)$ is isomorphic to the `Hall algebra' associated to any quiver Q formed by drawing arrows on the edges of the Dynkin diagram. Here we categorify the Hall algebra using a method called `groupoidification'. The stardard Hall algebra construction itself can be thought of as a decategorification. Specifically, the Hall algebra is a decategoried version of Rep(Q), the category of representations of the quiver $Q$. However, the challenge is showing that $\Rep(Q)$ is a kind of categorified Hopf algebra in some braided monoidal bicategory. What we will do is construct this braided monoidal bicategory, and describe why this is the right setting to categorify the theory of Hall algebras.  

The plan of the thesis is as follows. In Chapter \ref{HA}, we start by describing the basic theory of Hall algebras in a new context first described by the author in \cite{HopfObject}. Hall algebras have been a popular topic in recent years because of their connection to quantum groups. As stated previously, the Hall algebra of a quiver is isomorphic to 'half' the quantum group associated to the quiver's underlying Dynkin diagram. By `half' we mean the positive part of the standard triangle decomposition, $U_q^+(\g)$. This construction provides interesting insight into many structures on the quantum group, but unfortunately does not do everything we hope. 

A fundamental problem arises when we try to make a Hall algebra into a Hopf algebra. In the initial definition of the Hall algebra, we start with a nice associative multiplication. We also find that the Hall algebra is a coalgebra with an equally nice coassociative comultiplication. However, when we try to check that the algebra and coalgebra fit together to form a bialgebra, we see this fails. Instead, the combination of these maps obeys `Green's Formula', a relationship between the multiplication and comultiplication which we describe in detail below (Proposition \ref{GF}). This formula basically says that the Hall algebra is `almost' a bialgebra in the standard category $\Vect$. Specifically, we only miss having a bialgebra by a coefficient. To see where this extra coefficient comes from, consider the string diagrams which describe the general bialgebra compatibility axiom. As is standard, we will write multiplication of two elements as:
\[\xy
(-5,0);(0,-6)**\crv{(-5,-2)&(0,-4)};
(5,0);(0,-6)**\crv{(5,-2)&(0,-4)};
(0,-6);(0,-9)**\crv{};
\endxy\]
and comultiplication of an element as:
\[\xy
(-5,0);(0,6)**\crv{(-5,2)&(0,4)};
(5,0);(0,6)**\crv{(5,2)&(0,4)};
(0,6);(0,9)**\crv{};
\endxy\]
We can then draw the bialgebra axiom as follows. We first draw multiplication, followed by comultiplication, which looks like this:
\[ \xy 0;/r.10pc/:
(-10,-20)*{}="b1"; (10,-20)*{}="b2";
(-10,20)*{}="T1"; (10,20)*{}="T2";
(0,10)*{}="C";(0,-10)*{}="D";
"T1";"C"**\crv{(-10,17)&(0,13)};
"T2";"C"**\crv{(10,17)&(0,13)};
"b1";"D"**\crv{(-10,-17)&(0,-13)};
"b2";"D"**\crv{(10,-17)&(0,-13)};
"D";"C"**\crv{};
\endxy \] 
\noindent This should equal the result of comultiplying each element and then multiplying the resulting tensor product of elements. This will look like:
\[\xy 0;/r.10pc/:
(-10,-20)*{}="b1"; (10,-20)*{}="b2";
(-10,20)*{}="t1"; (10,20)*{}="t2";
(-10,15)*{}="A";(10,15)*{}="B";
(-10,-15)*{}="E";(10,-15)*{}="F";
"t1";"A"**\crv{};
"t2";"B"**\crv{};
"A";"E"**\crv{(-10,10)&(-18,0)&(-10,-10)};
"B";"F"**\crv{(10,10)&(18,0)&(10,-10)};
"E";"b1"**\crv{};
"F";"b2"**\crv{};
"B";"E"**\crv{(10,10)&(-10,-10)}\POS?(.5)*{\hole}="H";
"A";"H"**\crv{(-10,10)};
"H";"F"**\crv{(10,-10)};
\endxy \]
But there is wrinkle, namely the braiding of the strings halfway down the diagram. This means we must be working in a braided monoidal category. For the Hall algebra, the seemingly natural choice to work in would be $\Vect$, the category of vector spaces and linear operators. In $\Vect$ the obvious braiding would simply swap elements with no coefficient. However we have already noted that in $\Vect$ the Hall algebra does not satisfy the bialgebra condition as desired.

To `fix' this, a new structure called a `twisted' bialgebra is usually introduced, where swapping the order of elements can still be done, but at the price of an extra coefficient. This coefficient becomes $q^{-\langle A,D\rangle}$ when swapping elements $A$ and $D$, where $\langle A,D\rangle$ is a bilinear form on a group $K$, which is the Grothendieck group of $\Rep(Q)$. 

To obtain a true (untwisted) bialgebra, one then extends the Hall algebra to some larger algebra and alters the multiplication and comultiplication. This process is interesting in its own right, because the result is isomorphic to a larger piece of a quantum group, namely the universal enveloping algebra of the Borel subalgebra, $\mathfrak{b}$. However, we want to take a different direction to avoid the artificial nature of this fix.

We will approach the problem directly. Instead of describing the Hall algebra as a `twisted' bialgebra, we will find a braided monoidal category other than $\Vect$ where the Hall algebra is a true bialgebra object. We accomplish this by giving the category of $K$-graded vector spaces, $\Vect^K$, a braiding that encodes the twisting in the Hall algebra. This works since the extra coefficient $q^{-\langle A,D\rangle}$ from Green's Formula depends on the crossing strands in the diagram for the bialgebra axiom. This approach is more natural because it accounts for the extra coefficient without introducing a `twisted' bialgebra and later changing the multiplication and comultiplication. This more elegant approach was mentioned by Kapranov \cite{Kapranov} but details were not provided. Also, Kapranov was working with the same twisted multiplication and comultiplication as Ringel \cite{Ringel2}, where we are using the simpler, non-twisted versions of the maps instead.

We conclude Chapter \ref{HA} by providing the antipode for this bialgebra to show that the Hall algebra is a Hopf algebra object in our new category. The next step is to categorify all of this. What this means is that we would like to replace our braided monoidal category of $K$-graded vector spaces with a braided monoidal bicategory. We expect that the categorified Hall algebra will live in here. All of this is based on a general categorification of linear algebra called `groupoidification'. In Chapter \ref{groupoidification} we describe the basics of groupoidification, as first described by Baez, Hoffnung, and the current author \cite{BaezHoffnungWalker:2009HDA7}. Specifically, we show how one can replace vector spaces with groupoids, and linear operators with spans of groupoids. While $\Vect$ is a category, groupoids form a bicategory, because we also have a notion of maps between spans of groupoids.

However, the Hall algebra is not just a vector space, but a graded vector space. So we really need to not just categorify vector spaces, but graded vector spaces as well. To do this, we need to consider not just groupoids, but groupoids equipped with a functor to a fixed groupoid $\mathcal{A}_0$.  We call these `groupoids over $\mathcal{A}_0$'.  Specifically, the groupoid $\mathcal{A}_0$ will be the underlying groupoid of the abelian category $\Rep(Q)$. A groupoid over $\mathcal{A}_0$ resembles a $K$-graded vector space, because $K$ is the Grothendieck group of $\Rep(Q)$. For any object in this groupoid, the vector it corresponds to will have grade equal to the image of that element in $\mathcal{A}_0$.

In Chapter \ref{GHA} we will describe in detail the braided monoidal bicategory of these groupoids over $\mathcal{A}_0$, along with the appropriate notion of `span' and `map between spans'. For this bicategory, the monoidal structure will be constructed very simply from the direct sum in the abelian category $\Rep(Q)$ and the cartesian product of groupoids. The braiding, however, will we significantly more interesting. We first note that in the Hall algebra construction described in Chapter \ref{HA}, the braiding was primarily defined by the positive, rational coefficients $q^{-\langle A,D\rangle}$. One of the interesting features of the groupoidification program is that we have a way to describe any positive real number as the cardinality of some groupoid. We find such a groupoid, called $\EXT$, which combines the familiar functors $\Hom$ and $\Ext^1$. This groupoid is the building block of the braiding span for this monoidal bicategory. We finish the section by verifying that our construction indeed gives a braided monoidal bicategory.

What remains to be done is verifying that there is an object in this bicategory which is an analog of the Hall algebra. It turns out that the correct choice for this will be the groupoid $\mathcal{A}_0$ over itself! In Chapter \ref{H2A} we will describe the multiplication and comultiplication spans, and verify that they degroupoidify into the multiplication and comultiplication for the Hall algebra. What remains to be done is verifying that these spans fit together to form a `Hopf $2$-algebra' in our braided monoidal bicategory. There have been several notions of Hopf $2$-algebra presented in recent years. One of the first versions came from Neuchl \cite{Neuchl} who called it a Hopf category. Hopf $2$-algebras have also been studied by  Pfeiffer \cite{Pfeiffer}, and another version was recently described by Fregier and Wagemann \cite{Fregier:H2A}. With any of these definitions, the work lies in verifying quite a large number of coherence laws, but the early calculations presented here lead us to believe we have made the right choices for our categorified Hall algebra.

We should admit that the approach described here is only one of many strategies for categorifying quantum groups, and far from the most sophisticated.
Crane and Frenkel \cite{CraneFrenkel} first sketched out the idea of a Hopf category, and conjectured the existence of a canonical basis for any quantum group, which suggests that one can construct a Hopf category with these basis elements as objects. This canonical basis was later made precise by Lusztig \cite{Lusztig1, Lusztig2, Lusztig3}and Kashiwara \cite{Kashiwara1, Kashiwara2, Kashiwara3}. Recently there has been significant progress by others in providing a categorification of quantum groups. Khovanov and Lauda \cite{KL1, KL2, KL3} presented an approach which uses a diagramatic calculus to provide a categorification directly from the Cartan datum. They initially presented calculations for $U_q(\mathfrak{sl}_2)$, but the also gave a conjecture for the quantum group $U_q(\g)$ for any simple Lie algebra $\g$. Also, categorifying the representation theory of quantum groups has recently received attention from Chuang and Rouquier \cite{ChuangRouquier, Rouquier}. There is still more work to be done. In this thesis we work with quantum groups for values of $q$ which are powers of primes. Categorification of quantum groups over roots of unity has received some attention \cite{Khovanov}, but definitely needs to be developed further. Also, we have yet to construct a braided monoidal bicategory which categorifies the representation theory.   
% Date last edited: Feb. 7th, 2011

\chapter{Hall Algebras}\label{HA}
In this chapter we will introduce the theory of Hall algebras, and provide a new perspective on the structure theory which we will later use to categorify everything. 
\section{Hall Algebras}\label{hall}
In this section we will describe the construction of the Ringel-Hall algebra. We begin with a quiver $Q$ (i.e. a directed graph) whose underlying graph is that of a simply-laced Dynkin diagram. We will then consider the abelian category $\Rep(Q)$ of all finite dimensional representation of the quiver $Q$ over a fixed finite field $\mathbb{F}_q$.

We start by fixing a finite field $\F_q$ and a directed graph $D$, 
which might look like this:
\[ \xymatrix@=10pt{
 &&&&& \bullet  \\
 \bullet \ar@(ul,dl)[] \ar@/^1pc/[rr]
 && \bullet \ar@/^1pc/[ll] \ar@/^1pc/[rr] \ar@/_1pc/[rr] \ar[rr]
 && \bullet \ar[ur] \ar[dr] \\
 &&&&& \bullet  
 }
\]
We shall call the category $Q$ freely generated by $D$ a \textbf{quiver}.
The objects of $Q$ are the vertices of $D$, while the morphisms are
edge paths, with paths of length zero serving as identity morphisms.

By a \textbf{representation} of the quiver $Q$ we mean a functor 
\[   R \maps Q \to \FinVect_q, \]
where $\FinVect_q$ is the category of finite-dimensional vector spaces
over $\F_q$.  Such a representation simply assigns a vector space
$R(d) \in \FinVect_q$ to each vertex of $D$ and a linear operator
$R(e) \maps R(d) \to R(d')$ to each edge $e$ from $d$ to $d'$.  By a
\textbf{morphism} between representations of $Q$ we mean a natural
transformation between such functors.  So, a morphism $\alpha \maps R
\to S$ assigns a linear operator $\alpha_d \maps R(d) \to S(d)$ to
each vertex $d$ of $D$, in such a way that
\[
\xymatrix{
R(d) \ar[d]_{\alpha_d} \ar[r]^{R(e)} & R(d') \ar[d]^{\alpha_{d'}} \\
S(d) \ar[r]_{S(d)} & S(d')
}
\]
commutes for any edge $e$ from $d$ to $d'$.  There is a category
$\Rep(Q)$ where the objects are representations of $Q$ and the
morphisms are as above.  This is an abelian category, so we can speak
of indecomposable objects, short exact sequences, etc.\ in this
category.

In 1972, Gabriel \cite{Gabriel} discovered a remarkable fact.  Namely:
a quiver has finitely many isomorphism classes of indecomposable
representations if and only if its underlying graph, ignoring the
orientation of edges, is a finite disjoint union of Dynkin diagrams of
type $A, D$ or $E$.  These are called {\bf simply laced} Dynkin
diagrams.

Henceforth, for simplicity, we assume the underlying graph of our
quiver $Q$ is a simply laced Dynkin diagram when we ignore the
orientations of its edges.  Let $X$ be the underlying groupoid of
$\Rep(Q)$: that is, the groupoid with representations of $Q$ as
objects and \textit{isomorphisms} between these as morphisms.  We will
use this groupoid to construct the Hall algebra of $Q$.

As a vector space, the Hall algebra is just $\R[\u{X}]$.
Recall that this is the vector space
whose basis consists of isomorphism classes of objects in $X$.  In
fancier language, it is the zeroth homology of $X$.

We now focus our attention on the Hall algebra product.  Given three quiver
representations $M,N,$ and $E$, we define the set:
\[\mathcal{P}_{MN}^E=
\{(f,g): 
0\to N \stackrel{f}{\rightarrow} E \stackrel{g}{\rightarrow} M \to 0 
\textrm{\; is exact} \} \]
and we call its set cardinality $P_{MN}^E$. In the chosen category this set has a finite cardinality, since each representation is a finite-dimensional vector space over a finite field. 
The Hall algebra product counts these exact sequences, but with a
subtle `correction factor':
\[[M] \cdot [N] =\sum_{[E] \in \u{X}} 
\frac{P_{MN}^E}{\aut(M) \, \aut(N)}\, [E] \,.\]
Where we call $\aut(M)$ the set cardinality of the group $\Aut(M)$.

Somewhat surprisingly, the above product is associative.  In fact,
Ringel \cite{Ringel} showed that the resulting algebra is isomorphic
to the positive part $U_q^+ \g$ of the quantum group corresponding to
our simply laced Dynkin diagram!  So, roughly speaking, the Hall algebra
of a simply laced quiver is `half of a quantum group'.

This isomorphism also relates to a coalgebra structure on the Hall algebra. Using the same ideas from the multiplication formula, we can define a comultiplication on the Hall algebra to be a carefully weighted sum on ways to `factor' a representation via short exact sequences. Formulaically this becomes:
\[\Delta(E)=\sum_{[M],[N] \in \u{X}} 
\frac{|\mathcal{P}_{MN}^E|}{ \aut(E)}\, [N]\ten [M] \,.\]
Again, Ringel found that these are the correct factor to make the comultiplication coassociative. However, we immediately run into a problem; these two maps do not satisfy the compatibility condition for a bialgebra.

\section{The Category of K-graded Vector Spaces}\label{gvs}

It is interesting to note that the standard multiplication and comultiplication on $U_q^+ \g$ (which the Hall algebra is isomorphic to) also do not satisfy the compatibility axiom of a bialgebra, so we should not expect the Hall algebra to, either. This does not mean there is not an interesting relationship between the multiplication and comultiplication in the Hall algebra. This relationship is often described as being a `twisted' bialgebra, where we do not use the standard extension of the multiplication to the tensor product. We would like to take a different point of view here. It turns out that the bialgebra axiom can be satisfied if we change the category in which we ask for them to be compatible. 

In order to describe this new category, we will start with a definition of the Grothendieck group of a general abelian category.
\begin{definition}
Let $\mathcal{A}$ be an abelian category. We can define an equivalence relation on isomorphism classes of objects in $\mathcal{A}$ by $[A]+[B]=[C]$ if there exists a short exact sequence $0\to A\to C\to B\to 0$. The set of equivalence classes under this relation form a group $K_0(\mathcal{A})$ called the {\rm \textbf{Grothendieck group}}.
\end{definition} 
$K_0(\mathcal{A})$ has a universal property in the following sense. Given any abelian group $G$, any additive function $f$ from isomorphism classes of $\mathcal{A}$ to the group $G$ will give a unique abelian group homomorphism $\tilde{f}\maps K_0(\mathcal{A})\to G$ such that the following diagram commutes:

\[\xymatrix{
\mathcal{A}\ar[rr]\ar[dr]_f &  &  K_0(\mathcal{A}) \ar[dl]^{\exists !\tilde{f}} \\
  & G & \\
}\]

The original purpose of the Grothendieck group was to study Euler characteristics, and this is precisely why we are interested in them here.
In many of the standard references for Hall algebras \cite{Hubery, Schiffman} the characteristics of the Grothendieck group of $\Rep(Q)$ are explained explicitly. Many of these properties follow from the fact that $\Rep(Q)$ is hereditary.  We can also describe these properties in the general case of an abelian category $\mathcal{A}$ which has finite homological dimension. However, to construct the entire Hall algebra, our
abelian category will need to hold to the extra finiteness properties that the
groups $\Ext^i(M,N)$ must be finite. This condition is sufficient since it makes the sets $\mathcal{P}^E_{MN}$ finite, and makes the bilinear form in the next proposition well defined.

\begin{proposition}Let $\mathcal{A}$ be an abelian $k$-linear category for some field $k$. Suppose that $\mathcal{A}$ has finite homological dimension $d$ and $\dim\Ext^i(M,N)$ is finite for all objects $M,N\in \mathcal{A}$. If $K=K_0(\mathcal{A})$ is the Grothendieck group of $\mathcal{A}$, then $K$ admits a bilinear form $\langle\cdot,\cdot\rangle\maps K\times K\to \C$ given by:
\[\langle \underline{m},\underline{n}\rangle = \sum_{i=0}^d (-1)^{i}\dim\Ext^i(M,N)\]
\end{proposition}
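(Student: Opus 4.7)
The plan is to use the universal property of the Grothendieck group stated just before the proposition. Since $K = K_0(\mathcal{A})$ is generated by isomorphism classes of objects modulo the relation $[A] + [C] = [B]$ coming from short exact sequences $0 \to A \to B \to C \to 0$, it suffices to show that the proposed formula
\[ \chi(M,N) := \sum_{i=0}^{d} (-1)^i \dim \Ext^i(M,N) \]
is well-defined on isomorphism classes of objects (which is immediate since $\Ext^i$ is a functor) and that it is additive on short exact sequences in each variable separately. Once bi-additivity on objects is established, the universal property promotes $\chi$ to an abelian group homomorphism $K \otimes_{\Z} K \to \C$, which is exactly what is meant by a bilinear form in this setting.

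First I would verify that the sum defining $\chi(M,N)$ is finite and takes values in $\C$ (in fact in $\Z$). Finiteness of the number of nonzero terms is exactly the assumption that $\mathcal{A}$ has finite homological dimension $d$, so $\Ext^i(M,N) = 0$ for $i > d$. Finiteness of each term $\dim \Ext^i(M,N)$ is the other hypothesis. So $\chi$ is a well-defined integer-valued function on pairs of isomorphism classes.

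Next I would prove additivity in the first variable. Given a short exact sequence $0 \to A \to B \to C \to 0$ and an object $N$, the functors $\Ext^i(-, N)$ fit into the standard long exact sequence
\[ 0 \to \Hom(C, N) \to \Hom(B, N) \to \Hom(A, N) \to \Ext^1(C, N) \to \Ext^1(B, N) \to \cdots \]
which terminates at $\Ext^d(A, N) \to 0$ thanks to finite homological dimension. In any finite exact sequence of finite-dimensional vector spaces, the alternating sum of dimensions vanishes; applying this to the long exact sequence above yields
\[ \chi(C, N) - \chi(B, N) + \chi(A, N) = 0, \]
i.e. $\chi(B, N) = \chi(A, N) + \chi(C, N)$, which is precisely additivity in the first slot. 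Additivity in the second variable is entirely analogous, using the long exact sequence obtained from a short exact sequence in the second argument.

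Finally I would conclude. Fixing $N$ and applying the universal property of $K$ to the additive function $M \mapsto \chi(M, N)$ gives a homomorphism $K \to \C$; symmetrically, fixing $M$ and varying $N$ gives a homomorphism $K \to \C$. Together these assemble into a $\Z$-bilinear map $\langle \cdot, \cdot \rangle \maps K \times K \to \C$ with the required formula on classes of objects. The only real content is the additivity step, and the main (mild) obstacle there is simply justifying that $\Ext^i(-, N)$ and $\Ext^i(M, -)$ do produce the long exact sequences needed and that these sequences terminate; both facts are standard consequences of the hypotheses on $\mathcal{A}$.
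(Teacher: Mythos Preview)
Your proof is correct and uses the same underlying tool as the paper: the long exact sequence of $\Ext$ groups associated to a short exact sequence in $\mathcal{A}$. The differences are in scope and packaging rather than in strategy.

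The paper works only in the hereditary case $d=1$, and checks additivity only for the split sequence $0\to N_1 \to N_1\oplus N_2 \to N_2 \to 0$; it then tracks kernels and images of each map in the six-term exact sequence by hand to obtain the desired equality. Your argument is tighter on two counts. First, you verify additivity on an \emph{arbitrary} short exact sequence, which is what the definition of $K_0$ actually requires (the paper's computation, though stated only for the split case, would go through verbatim for a general sequence, so this is a cosmetic gap there rather than a real one). Second, rather than chasing dimensions term by term, you invoke once the standard fact that the alternating sum of dimensions in a bounded exact sequence of finite-dimensional vector spaces vanishes; this handles all $d$ at once and is the cleaner way to phrase the computation the paper is doing by hand.
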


\begin{proof}
We prove the theorem for $d=1$ (i.e. when the category is hereditary) since this is the main case we will use. The case when $d=0$ is simply bilinearity of $\Hom$, and the cases where $d>1$ follows by a similar argument to $d=1$.\\

For $d=1$ we need to show that:
\[\dim\Hom(M,N_1\oplus N_2)-\dim\Ext^1(M,N_1\oplus N_2)=\]
\[\dim\Hom(M,N_1)-\dim\Ext^1(M,N_1)+\dim\Hom(M,N_2)-\dim\Ext^1(M,N_2)\]
we begin with the short exact sequence:
\[0\to N_1\stackrel{i_1}{\rightarrow} N_1\oplus N_2 \stackrel{\pi_2}{\rightarrow} N_2\to 0\]
which, since $d=1$, gives rise to the long exact sequence:
\[0\to \Hom(M,N_1) \stackrel{\tilde{i_1}}{\rightarrow} \Hom(M,N_1\oplus N_2) \stackrel{\tilde{\pi_2}}{\rightarrow} \Hom(M,N_2) \stackrel{h}{\rightarrow}\] \[\Ext^1(M,N_1)\stackrel{\hat{i_1}}{\rightarrow} \Ext^1(M,N_1\oplus N_2) \stackrel{\hat{\pi_2}}{\rightarrow} \Ext^1(M,N_2) \to 0.\]
%\begin{itemize}
%\item $\im \tilde{i_1}\iso \Hom(M,N_1)$
%\item $\im \tilde{i_1}\iso \ker \tilde{\pi_2}$
%\item $\im \tilde{\pi_2}\iso \ker h$
%\item $\im h\iso \ker \hat{i_1}$
%\item $\im \hat{i_1}\iso \ker \hat{\pi_2}$
%\item $\im \hat{\pi_2}\iso \Ext^1(M,N_2)$
%\end{itemize}
%Also, by the standard dimension theorem we also know:
%\begin{itemize}
%\item $\dim\Hom(M,N_1\oplus N_2)=\dim \im \tilde{\pi_2}+\dim \ker \tilde{\pi_2}$
%\item $\dim\Hom(M,N_2)=\dim \im h+\dim \ker h$
%\item $\dim\Ext^1(M,N_1\oplus N_2)=\dim \im \hat{\pi_2}+\dim \ker \hat{\pi_2}$
%\item $\dim\Ext^1(M,N_1)=\dim \im \hat{i_1}+\dim \ker \hat{i_1}$
%\end{itemize}
Using a variety of basic equations from the fact that this sequence is exact, as well as some dimension arguments, the left hand sides becomes:
\[\dim\Hom(M,N_1\oplus N_2)-\dim\Ext^1(M,N_1\oplus N_2)\]
\[=\dim \im \tilde{\pi_2}+\dim \ker \tilde{\pi_2}-\dim \im \hat{\pi_2}-\dim \ker \hat{\pi_2}\]
and the right hand side turns into:
\[\dim\Hom(M,N_1)-\dim\Ext^1(M,N_1)+\dim\Hom(M,N_2)-\dim\Ext^1(M,N_2)\]
\[=\dim \im h+\dim \ker h+\dim\im \tilde{i_1}-\dim \im \hat{i_1}-\dim \ker \hat{i_1}-\dim\im \hat{\pi_2}\]
\[=\dim \ker \hat{i_1}+\dim \im \tilde{\pi_2}+\dim\ker \tilde{\pi_2}-\dim \ker \hat{\pi_2}-\dim \ker \hat{i_1}-\dim\im \hat{\pi_2}\]
\[=\dim \im \tilde{\pi_2}+\dim\ker \tilde{\pi_2}-\dim \ker \hat{\pi_2}-\dim\im \hat{\pi_2}.\]
\end{proof}

In general, it is possible to construct a braided monoidal category $\Vect^G$ from any abelian group $G$ equipped with a bilinear form $\langle\cdot,\cdot\rangle$. One common example is the category of super-algebras, which can be thought of in this context in terms of the group $\Z_2$ with its unique non-trivial bilinear form. Joyal and Street \cite{JoyalStreet} described the general idea of constructing a braided monoidal category from a bilinear form. In the next theorem, we will describe how this braiding works in detail for our desired case of the Grothendieck group $K=K_0(\mathcal{A})$ with the previously described bilinear form. 

\begin{theorem}\label{BMC} Let $\mathcal{A}$ be an abelian, $k$-linear category with finite homological dimension. Let $K=K_0(\mathcal{A})$ be its Grothendieck group, and suppose $\dim\Ext^i(M,N)$ is finite for all objects $M,N \in\mathcal{A}$. Then, the category $\Vect^K$ of $K$-graded vector spaces and grade preserving linear operators is a braided monoidal category, with the braiding given by:
\[B_{V,W}:V\ten W\to W\ten V\]
\[v\ten w \mapsto q^{-\langle \underline{n},\underline{m}\rangle}w\ten v\]
where $q$ is a non-zero element of $k$.
\end{theorem}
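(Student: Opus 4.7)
The plan is to verify the three classes of data needed for a braided monoidal category: the monoidal structure on $\Vect^K$, well-definedness of the braiding $B_{V,W}$, and the two hexagon axioms.

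First I would set up the monoidal structure. For $V = \bigoplus_{n \in K} V_n$ and $W = \bigoplus_{n \in K} W_n$ in $\Vect^K$, define $V \ten W$ with $K$-grading $(V \ten W)_k = \bigoplus_{m+n=k} V_m \ten W_n$, using the group operation in $K$. The unit object is $k$ concentrated in grade $0$, and the associator and unitors are inherited from the underlying vector-space structure, so their coherence (the pentagon and triangle) reduces to the classical pentagon and triangle in $\Vect$. This step is essentially bookkeeping.

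Next I would check that $B_{V,W}$ is grade-preserving: on a homogeneous element $v \ten w$ with $v \in V_n$ and $w \in W_m$, the image $q^{-\ip{\u{n}}{\u{m}}}\, w \ten v$ lies in $W_m \ten V_n \subset (W \ten V)_{m+n}$, matching the grade $n+m$ of $v \ten w$. Naturality in $V$ and $W$ is automatic because grade-preserving maps act only within each homogeneous piece and the scalar $q^{-\ip{\u{n}}{\u{m}}}$ depends only on the grades, not on the specific vectors. Invertibility of $B_{V,W}$ follows by multiplying by $q^{\ip{\u{n}}{\u{m}}}$.

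The heart of the argument is the two hexagon axioms. For the first hexagon, I would evaluate both sides on a homogeneous element $v \ten (w \ten x)$ with $v \in V_n$, $w \in W_{m_1}$, $x \in X_{m_2}$. The direct braiding $B_{V,W \ten X}$ produces the scalar $q^{-\ip{\u{n}}{\u{m_1}+\u{m_2}}}$, while the composite $(\id_W \ten B_{V,X}) \circ (B_{V,W} \ten \id_X)$ produces $q^{-\ip{\u{n}}{\u{m_1}}} \cdot q^{-\ip{\u{n}}{\u{m_2}}}$. The two scalars agree precisely because $\ip{\cdot}{\cdot}$ is bilinear in the second slot, which is supplied by the previous proposition. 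The second hexagon is handled identically using bilinearity in the first slot.

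The only potential obstacle is ensuring that the bilinear form $\ip{\cdot}{\cdot}$ is genuinely bilinear on all of $K$, but this is exactly the content of the preceding proposition under the hypothesis of finite homological dimension and finite-dimensional $\Ext^i$. Once that is invoked, the hexagons become one-line calculations and the theorem follows.
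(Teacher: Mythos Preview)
Your proposal is correct and follows essentially the same route as the paper: both reduce the hexagon axioms to a comparison of scalars on homogeneous tensors and then invoke bilinearity of $\langle\cdot,\cdot\rangle$ from the preceding proposition. You are slightly more explicit than the paper in checking naturality, invertibility, and the second hexagon (the paper only writes out one hexagon and says the other is similar), but the core argument is identical.
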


\begin{proof} The monoidal structure on this category is just the tensor product in the category $\Vect$. To define a braiding on this category, we first note that the braiding is defined by isomorphisms in the category which are graded linear operators. Because of linearity, it is enough to define these isomorphisms on a single graded piece. Also note that for any two $K$-graded vector spaces $V$ and $W$, a graded piece of the tensor product $V\ten W$ can be written as the sum of tensor products of graded pieces from $V$ and $W$, or more precisely:
\[(V\ten W)_{\underline{d}}=\bigoplus_{\underline{n}\in K} V_{\underline{n}}\ten W_{\underline{d}-\underline{n}}.\]
This lets us define the braiding $B_{V,W}\maps V\ten W\to W\ten V$ only on the tensor product of the graded piece $V_{\underline{n}}\ten W_{\underline{m}}$. 
We thus define the map: 
\[B_{\underline{n},\underline{m}}\maps V_{\underline{n}}\ten W_{\underline{m}}\to W_{\underline{m}}\ten V_{\underline{n}}\]
\[v\ten w \mapsto q^{-\langle \underline{n},\underline{m}\rangle}w\ten v\]
which is easily seen to be an isomorphism. We only need to check the hexagon equations, i.e. ones of the form: 
\[\xymatrix{
             & (W\ten V)\ten U\ar[r]^\alpha & W\ten (V\ten U)\ar[dr]^{1\ten B_{V,U}} & \\
(V\ten W)\ten U \ar[ur]^{B_{V,W}\ten 1}\ar[dr]_\alpha & & & W\ten (U\ten V) \\
             & V\ten (W\ten U) \ar[r]_{B_{V,W\ten U}} & (W\ten U)\ten V\ar[ur]_\alpha & \\
}\]
We will make the argument for the above hexagon identity, noting the the other versions follow by a similar argument. Now, since we have restricted ourselves to vector spaces with a single grade, it is enough to chase a general element around this diagram. let $v\in V_{\underline{n}}$, $w\in W_{\underline{m}}$, and $u\in U_{\underline{p}}$. The top path of the hexagon diagram yields the composite:
\[(v\ten w)\ten u \mapsto q^{-\langle\underline{n},\underline{m}\rangle-\langle\underline{n},\underline{p}\rangle}w\ten(u\ten v).\]
For the bottom path we note that $v\ten w\in (V\ten W)_{\underline{m+p}}$, so we get the composite:
\[(v \ten w)\ten u \mapsto q^{-\langle \underline{n}, \underline{m+p}\rangle} w \ten(u \ten v).\]
Hence, commutativity of the diagram will follow from the equality \[-\langle\underline{m},\underline{n}\rangle-\langle\underline{m},\underline{p}\rangle=-\langle \underline{m}, \underline{n+p}\rangle,\] which is precisely bilinearity of the form $\langle\cdot, \cdot\rangle$.
\end{proof}

\section{The Hopf Algebra Structure}
Now we consider our Hall algebra in the braided monoidal category $\Vect^K$. The concept of a Hopf algebra object in a braided monoidal category was described by Majid \cite{Majid}, where he called it a `braided group', but later \cite{Majid2} described it in the way we will use here. The basic idea is to ask if the standard defining commutative diagrams for a Hopf algebra hold in some braided monoidal category, instead of the symmetric monoidal category $\Vect$. For the remainder of this section, we will let $Q$ be a simply laced Dynkin quiver. We will focus back on the specific abelian category $\Rep(Q)$ and the category of $K_0(\Rep(Q))-$graded vector spaces, which we showed in Section \ref{gvs} to be a braided monoidal category. Remember that $\Rep(Q)$ is hereditary and satisfies all the finiteness conditions of Section \ref{gvs}. We can now state the main theorem of this paper.
\begin{theorem}\label{HO} The Hall algebra of $\Rep(Q)$ is a Hopf algebra object in the category $Vect^K$. \end{theorem}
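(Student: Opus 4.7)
The plan is to verify, in order, the axioms defining a Hopf algebra object in $\Vect^K$: that $m$ and $\Delta$ are morphisms in $\Vect^K$; associativity and coassociativity; the unit and counit axioms; the bialgebra compatibility using the braiding of Theorem~\ref{BMC}; and finally the existence of an antipode.

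First I would dispatch the easier structure. The $K$-grading is given by $[M]\mapsto\underline{M}\in K$. Because any short exact sequence $0\to N\to E\to M\to 0$ forces $\underline{E}=\underline{M}+\underline{N}$, every term appearing on the right of $[M]\cdot[N]$ and of $\Delta([E])$ carries the correct grade, so $m$ and $\Delta$ are morphisms in $\Vect^K$. Associativity of $m$ and coassociativity of $\Delta$ are due to Ringel~\cite{Ringel}. The unit $\eta$ sends $1$ to the class $[0]$ of the zero representation and the counit $\epsilon$ picks off the coefficient of $[0]$; the unit, counit, and the identities $\epsilon\circ m = \epsilon\otimes\epsilon$, $\Delta\circ\eta=\eta\otimes\eta$, $\epsilon\circ\eta=\id$ all reduce to the evident facts $P^E_{0N}=\delta_{E,N}$ and $P^E_{M0}=\delta_{E,M}$.

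The main content of the theorem is the bialgebra compatibility. Using the braiding of Theorem~\ref{BMC}, the axiom in $\Vect^K$ reads, in Sweedler notation,
\[ \Delta(x\cdot y) \;=\; \sum q^{-\langle \underline{x_{(2)}},\,\underline{y_{(1)}}\rangle}\,(x_{(1)}\cdot y_{(1)})\otimes (x_{(2)}\cdot y_{(2)}). \]
Evaluating both sides on basis elements $[M]$ and $[N]$ using the formulas from Section~\ref{hall} turns each side into a weighted count of pairs of short exact sequences in $\Rep(Q)$ sharing a common middle term. Equality of the two resulting counts is precisely the content of Green's Formula (Proposition~\ref{GF}): the twisting coefficient that Green's counting argument produces is $q^{-\langle\cdot,\cdot\rangle}$ with respect to the Euler form on $K$, which is exactly the coefficient prescribed by the braiding of Theorem~\ref{BMC}. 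In other words, in $\Vect^K$ Green's Formula is no longer an obstruction to being a bialgebra --- it \emph{is} the bialgebra axiom. This matching is the chief obstacle of the proof and is the reason for introducing $\Vect^K$ in Section~\ref{gvs} in the first place.

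Finally, I would supply an antipode. The Hall algebra is a connected graded braided bialgebra: the degree-zero piece is spanned by $[0]$, and every homogeneous component is finite dimensional with a well-founded length filtration coming from composition series in $\Rep(Q)$. Set $S([0])=[0]$, and on an element of positive degree define $S([M])$ by solving the antipode equation $m\circ(S\otimes\id)\circ\Delta=\eta\circ\epsilon$ for its leading term; the Hall comultiplication has the form $\Delta([M]) = [0]\otimes[M] + [M]\otimes[0] + (\text{lower length terms on each factor})$, so this recursion expresses $S([M])$ uniquely in terms of values of $S$ on classes of strictly smaller length. Induction on length then yields a well-defined, grade-preserving map, and the companion identity $m\circ(\id\otimes S)\circ\Delta=\eta\circ\epsilon$ follows from the same length induction once the braided bialgebra axiom is in hand, completing the verification that the Hall algebra is a Hopf algebra object in $\Vect^K$.
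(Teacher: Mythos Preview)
Your argument is correct and, for the bialgebra part, follows the paper's own route essentially verbatim: check that $m$ and $\Delta$ are $K$-graded because $\underline{E}=\underline{M}+\underline{N}$ in any short exact sequence, cite Ringel for (co)associativity, and then observe that Green's Formula (Proposition~\ref{GF}) is literally the bialgebra axiom once the symmetric flip of $\Vect$ is replaced by the braiding of Theorem~\ref{BMC}. Your Sweedler-notation identity unwinds to exactly the computation the paper carries out in the lemma following Proposition~\ref{GF}.

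The one place where you depart from the paper is the antipode. The paper simply declares $S([M])=-[M]$ ``on generators'' and asserts this is an antipode, without further justification. You instead invoke the standard connected-graded (Takeuchi-style) argument: since the degree-zero piece is spanned by $[0]$ and $\Delta$ has the form $[M]\mapsto [0]\otimes[M]+[M]\otimes[0]+(\text{shorter})$, the convolution inverse of the identity exists and is produced by a well-founded recursion on length. Your version has two advantages: it makes clear that the antipode is uniquely determined, and it automatically gives both antipode identities. The paper's formula, interpreted as $S=-\id$ on the primitive simple classes $[S_i]$ and extended as a braided anti-algebra map, agrees with your recursive $S$ by that uniqueness, so the two descriptions coincide; your presentation is just more explicit about why the map is well defined.
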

To prove this theorem we need to work through the following lemmas. For what follows, we will set $X$ to be the underlying groupoid of $\Rep(Q)$, $\underline{X}$ to be the set of isomorphism classes in $X$, and $K=K_0(\Rep(Q))$. Recall that $R[\underline{X}]$ is the vector space of all finite linear combinations of elements of $\u{X}$. This vector space, which is the underlying vector space of the Hall algebra, is easily seen to be $K$ graded.
\begin{lemma}
The vector space $\Hall=\R[\underline{X}]$ is a $K$-graded vector space, with the grading on each isomorphism class $[M]\in\u{X}$ given by its image in $K$.
\end{lemma}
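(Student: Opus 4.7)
The plan is to exhibit the $K$-grading directly on the preferred basis of $\Hall$ and then extend by linearity. Recall that $\Hall = \R[\underline{X}]$ has, by construction, a basis consisting of the isomorphism classes $[M]$ of objects $M \in \Rep(Q)$. So to put a $K$-grading on $\Hall$, it suffices to assign each such basis vector $[M]$ an element of $K$, and then declare $\Hall_{\underline{k}}$ to be the span of all basis vectors assigned to $\underline{k}$.

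First I would recall the canonical map $\Ob(\Rep(Q)) \to K = K_0(\Rep(Q))$ sending an object $M$ to its class $\underline{M}$ in the Grothendieck group. The key observation is that this map factors through isomorphism classes: if $M \cong M'$, then from the short exact sequence $0 \to M \to M' \to 0 \to 0$ (or directly from the defining relation of the Grothendieck group, since isomorphic objects give the same class) one has $\underline{M} = \underline{M'}$ in $K$. Hence the assignment $[M] \mapsto \underline{M}$ is well-defined on $\underline{X}$.

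Next I would define, for each $\underline{k} \in K$,
\[
\Hall_{\underline{k}} \;=\; \mathrm{span}_{\R}\bigl\{\, [M] \in \underline{X} \;:\; \underline{M} = \underline{k}\,\bigr\},
\]
and verify the direct sum decomposition $\Hall = \bigoplus_{\underline{k} \in K} \Hall_{\underline{k}}$. This is immediate from the basis description of $\Hall$: the isomorphism classes partition according to the value of $\underline{M}$, so every element of $\Hall$ is a finite sum of homogeneous components in a unique way. Thus $\Hall$ is $K$-graded, and the grade of each basis element $[M]$ is precisely $\underline{M}$ as claimed.

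There is essentially no obstacle here; the only thing to be slightly careful about is the well-definedness of the grading on isomorphism classes, which is handled by the universal property of $K_0$. The lemma is really a bookkeeping statement that sets up the notation for the subsequent claims that the Hall algebra's multiplication, comultiplication, and braiding interact correctly with this grading.
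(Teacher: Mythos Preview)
Your argument is correct and is exactly the obvious verification the paper has in mind: the paper states this lemma without proof, noting just beforehand that the underlying vector space ``is easily seen to be $K$ graded.'' Your write-up simply makes explicit the well-definedness on isomorphism classes and the resulting direct sum decomposition, which is precisely the intended content.
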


For the next two lemmas, we note that the multiplication and comultiplication described were shown to be associative and coassociative in the original category $\Vect$ by Ringel \cite{Ringel}. This fact passes to our new category since neither axiom requires or depends on the particular braiding on vector spaces, so we will not repeat the argument. After stating both lemmas, we will provide a brief description of why each one is a morphism in the new category $\Vect^K$.

\begin{lemma}\label{mult} The multiplication map $m:\Hall\ten \Hall\to \Hall$ defined on basis elements by:

\[m([M]\ten[N]) =\sum_{[E]} 
\frac{P_{MN}^E}{\aut(M) \, \aut(N)}\, [E]\]
is a morphism in $\Vect^K$.
\end{lemma}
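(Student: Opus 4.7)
The plan is to unpack what it means for $m$ to live in $\Vect^K$: by the preceding lemma, $\Hall$ carries a $K$-grading where $[M]$ sits in degree $\underline{m}\in K$, the image of $M$ in the Grothendieck group. The tensor product in $\Vect^K$ inherits the standard grading, so $[M]\ten [N]$ has degree $\underline{m}+\underline{n}$. Linearity of $m$ is built into the definition (we extend the formula on basis elements $\R$-linearly), so the entire content of the lemma is that $m$ preserves the $K$-grading, i.e.\ every basis element $[E]$ appearing in $m([M]\ten[N])$ with nonzero coefficient satisfies $\underline{e}=\underline{m}+\underline{n}$ in $K$.

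The key step is therefore to identify when the coefficient $P_{MN}^E/\bigl(\aut(M)\aut(N)\bigr)$ can be nonzero. By definition of $\mathcal{P}_{MN}^E$, a nonzero coefficient means there exists at least one short exact sequence
\[
0\to N\to E\to M\to 0
\]
in $\Rep(Q)$. But this is exactly the defining relation of the Grothendieck group: any such sequence forces $\underline{e}=\underline{m}+\underline{n}$ in $K_0(\Rep(Q))=K$.

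Putting the two observations together, $m$ sends the degree-$(\underline{m}+\underline{n})$ basis element $[M]\ten[N]$ to a linear combination of basis elements $[E]$ each of which lives in degree $\underline{m}+\underline{n}$. Hence $m$ restricts to a linear map between the appropriate graded pieces, which is precisely the requirement to be a morphism in $\Vect^K$.

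There is no real obstacle here; the lemma is essentially a direct translation of the universal property of $K_0$. The only thing to flag, should it need emphasis, is that the grading of the tensor product used is the ordinary $K$-grading on $\Hall\ten\Hall$ (with $\ten$ the underlying tensor product of the monoidal structure on $\Vect^K$ described in Theorem \ref{BMC}), and that the nontrivial braiding plays no role at this step—it will only enter when we later check compatibility of $m$ with the comultiplication.
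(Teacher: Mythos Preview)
Your proof is correct and matches the paper's own argument essentially verbatim: the paper observes that a nonzero coefficient $P_{MN}^E$ forces the existence of a short exact sequence $0\to N\to E\to M\to 0$, so by the defining relation of the Grothendieck group the images satisfy $[M]+[N]=[E]$ in $K$, and hence the grade is preserved. Your additional remark that the braiding plays no role at this step is accurate and worth keeping.
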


\begin{lemma}\label{comult} The comultiplication map $\Delta:\Hall\to \Hall\ten \Hall$ defined on basis elements by:

\[\Delta(E)=\sum_{[M],[N]} 
\frac{P_{MN}^E}{ \aut(E)}\, [N]\ten [M]\]
is a morphism in $\Vect^K$. 
\end{lemma}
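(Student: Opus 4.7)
The plan is to verify that the comultiplication map $\Delta$ preserves the $K$-grading, which is the only condition required for it to be a morphism in $\Vect^K$. Since $\Delta$ is defined on basis elements and extended linearly, it suffices to check that for every isomorphism class $[E]\in\underline{X}$ of grade $\underline{e}\in K$, the image $\Delta([E])$ lies in the graded piece $(\Hall\otimes\Hall)_{\underline{e}}$.

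First, I would spell out the grading on $\Hall\otimes\Hall$. By the previous lemma, each basis element $[N]\otimes [M]$ has grade $\underline{n}+\underline{m}\in K$, since the tensor product of $K$-graded vector spaces carries the additive grading $(V\otimes W)_{\underline{d}}=\bigoplus_{\underline{n}}V_{\underline{n}}\otimes W_{\underline{d}-\underline{n}}$ used in Theorem \ref{BMC}.

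Next, I would observe that the coefficient $P_{MN}^E/\aut(E)$ in the formula for $\Delta([E])$ is non-zero exactly when $\mathcal{P}_{MN}^E$ is non-empty, that is, when there exists a short exact sequence
\[ 0\to N\to E\to M\to 0 \]
in $\Rep(Q)$. By the very definition of the Grothendieck group $K=K_0(\Rep(Q))$, the existence of such a sequence forces the relation $\underline{e}=\underline{n}+\underline{m}$ in $K$. Hence every term $[N]\otimes[M]$ that appears with a non-zero coefficient in $\Delta([E])$ satisfies $\underline{n}+\underline{m}=\underline{e}$, so $\Delta([E])\in(\Hall\otimes\Hall)_{\underline{e}}$.

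There is no real obstacle here beyond unpacking the definitions; the proof is essentially the dual of the argument used for Lemma \ref{mult}, with the Grothendieck group relation used to constrain the pair $([N],[M])$ summed over, rather than the class $[E]$. Since associativity is not at issue and the braiding of $\Vect^K$ plays no role in checking that $\Delta$ is merely a graded linear map, the lemma follows at once from the universal property of $K_0(\Rep(Q))$.
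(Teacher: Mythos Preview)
Your proposal is correct and follows essentially the same approach as the paper: the paper likewise observes that any term appearing with nonzero coefficient corresponds to a short exact sequence $0\to N\to E\to M\to 0$, so the defining relation of the Grothendieck group forces $[M]+[N]=[E]$ in $K$, and hence the grade is preserved. Your write-up is slightly more explicit about the grading on the tensor product, but the argument is the same.
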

Note that when Q is a simply-laced Dynkin quiver, the sums in Lemmas \ref{mult} and \ref{comult} are finite. Both of these lemmas are true for a similar reason. The important fact to note here is that for a fixed $M$, $N$, and $E$ in either sum, there is a short exact sequence $0\to N\to E\to M\to 0$. So by the definition of the Grothendieck group $K$, we have that their images obey the identity $[M]+[N] = [E]$. These images determine the grade of the corresponding graded piece they sit in, so the grade is clearly preserved by both maps.

Now we can focus on the compatibility of the new maps, which was the main reason for constructing this new category. We first need an important identity for the multiplication and comultiplication known as Green's Formula.

\begin{proposition}\label{GF}{\rm (Green's Formula).} For all $M$, $N$, $X$, and $Y$ in $\Rep(Q)$ we have the identity:
\[\sum_{[E]}{\frac{P^E_{MN}P^E_{XY}}{\aut(E)}}=\sum_{[A],[B],[C],[D]}{q^{-\langle A,D\rangle}\frac{P^{M}_{AB}P^{N}_{CD}P^{X}_{AC}P^{Y}_{BD}}{\aut(A)\aut(B)\aut(C)\aut(D)}}.\]
\end{proposition}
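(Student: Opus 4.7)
The plan is to prove Green's Formula via a ``$2\times 2$ cross diagram'' argument, pairing the objects counted by the LHS (pairs of short exact sequences sharing a common middle $E$) with those counted by the RHS (tuples of four outer extensions arranged around a grid of corner objects $A, B, C, D$). I would set both sides up as weighted counts over the groupoid of isomorphism classes in $\Rep(Q)$ and exhibit an explicit ``extract the corners'' map whose fibers carry exactly the factor $q^{-\langle A, D\rangle}$.

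First I would build the cross. The LHS $\sum_{[E]} P^E_{MN}P^E_{XY}/\aut(E)$ counts pairs of short exact sequences
\[0 \to N \to E \to M \to 0, \qquad 0 \to Y \to E \to X \to 0\]
up to isomorphism of $E$. Given such a pair, I extract corner objects inside $E$ by setting $D := N \cap Y$, $C := N/D$, $B := Y/D$, and $A := E/(N+Y)$. A diagram chase (essentially the snake lemma applied to the two given sequences) produces four induced short exact sequences
\[0 \to B \to M \to A \to 0, \quad 0 \to D \to N \to C \to 0,\]
\[0 \to C \to X \to A \to 0, \quad 0 \to D \to Y \to B \to 0,\]
which are precisely the sequences counted by $P^M_{AB}$, $P^N_{CD}$, $P^X_{AC}$, $P^Y_{BD}$ on the RHS.

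Next I would count the fibers of this extraction map. For fixed corners $A, B, C, D$ and fixed representatives of the four outer short exact sequences, any middle object $E$ completing the cross sits in a ``diagonal'' extension $0 \to D \to E'' \to A \to 0$ distilled from the commutative square; because $\Rep(Q)$ is hereditary, all higher obstructions vanish and the set of such completions becomes an $\Ext^1(A, D)$-torsor. Two completions that differ by the natural gauge action of $\Hom(A, D)$ (reparametrising the common middle) produce isomorphic cross diagrams, so the net contribution of each $(A, B, C, D)$-fiber is $|\Ext^1(A, D)|/|\Hom(A, D)| = q^{-\langle A, D\rangle}$, exactly the twist appearing on the RHS.

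Finally I would reconcile automorphism weights. Using a Riedtmann-type identity of the form $P^E_{MN}\,|\Hom(M, N)| = |\Ext^1(M, N)_E|\,\aut(E)$ for each of the two factorizations of $E$, the weight $1/\aut(E)$ on the LHS redistributes across the four outer sequences and, after the fiber count above, collapses into the factor $1/(\aut(A)\aut(B)\aut(C)\aut(D))$ on the RHS. The main obstacle is the homological fiber count: one must cleanly separate those automorphisms of the cross that are genuine reparametrizations (to be divided out) from those that label genuinely distinct middle terms, and verify that the hereditary hypothesis really does kill every higher obstruction arising from the cross. Once this accounting is in place, everything else is bookkeeping and the two sides match term-by-term over $[A], [B], [C], [D]$.
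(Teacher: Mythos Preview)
The paper does not actually prove Green's Formula; it explicitly says ``The proof of Green's formula is quite complex, and involves a large amount of homological algebra'' and defers entirely to Ringel \cite{RingelGreen}, Hubery \cite{Hubery}, and Schiffmann \cite{Schiffman}. So there is no in-paper argument to compare your proposal against.

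That said, your outline is recognisably the standard cross-diagram proof from those references: extract the four corners $A,B,C,D$ from the pair of factorisations of $E$, obtain the four induced outer short exact sequences, and then count how many middle objects $E$ reconstruct a given cross. Your identification of the fibre contribution as $|\Ext^1(A,D)|/|\Hom(A,D)|=q^{-\langle A,D\rangle}$ is the heart of the matter and is where the hereditary hypothesis is genuinely used. Be aware that the honest version of this step is more delicate than a single torsor statement: one must set up the $3\times 3$ commutative grid, show that the long exact sequence obstruction to filling in the middle lands in $\Ext^2$ (hence vanishes), and then carefully separate the $\Aut(E)$-action from the $\Hom(A,D)$-reparametrisation when matching the automorphism weights on the two sides. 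Your sketch names all the right ingredients, including the Riedtmann identity for the weight bookkeeping, but as written it is a proof plan rather than a proof; the ``main obstacle'' you flag is exactly where the several pages of homological algebra in the cited sources go.
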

The proof of Green's formula is quite complex, and involves a large amount of homological algebra. It was first presented by Ringel \cite{RingelGreen}, and also appears in \cite{Hubery} and \cite{Schiffman} with good explanations. What we are interested in is the consequence of Green's formula.

We observe in Green's formula the presence of our braiding coefficient $q^{-\langle A,D \rangle}$. It is important to note that this coefficient depends on what some might view as the ``outside'' objects $A$ and $D$, and not the ``inside'' objects $B$ and $C$. We deal with this by using a different comultiplication than the one usually described in the literature \cite{Hubery, Schiffman}. In fact, in the category $\Vect$ our chosen comultiplication is the opposite of the standard choice.

\begin{lemma}
In the category $\Vect^K$ the multiplication $m$ and comultiplication $\Delta$ satisfy the bialgebra condition, and thus $\Hall$ is a bialgebra object in $\Vect^K$.
\end{lemma}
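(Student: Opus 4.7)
The plan is to verify the bialgebra axiom directly on basis elements by expanding both sides of the identity $\Delta \circ m = (m \otimes m) \circ (1 \otimes B \otimes 1) \circ (\Delta \otimes \Delta)$ and recognizing that the resulting scalar equation is precisely Green's Formula. Since both maps are already known to be morphisms in $\Vect^K$, the compatibility condition makes sense, and because all the structure maps are $\R$-linear it suffices to check equality on a generating element $[M]\otimes [N]$.

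First I would compute the left-hand side by applying $m$ and then $\Delta$:
\[
\Delta(m([M]\otimes [N])) = \sum_{[E],[X],[Y]} \frac{P^E_{MN}\,P^E_{XY}}{\aut(M)\,\aut(N)\,\aut(E)}\,[Y]\otimes [X].
\]
Next I would compute the right-hand side by first comultiplying each factor, then braiding the middle two tensorands, then multiplying in parallel. The key point is that in $(1\otimes B\otimes 1)$, the braiding swaps a factor of grade $[A]$ past a factor of grade $[D]$, producing exactly the coefficient $q^{-\langle A,D\rangle}$ predicted by Theorem \ref{BMC}. Collecting the factors from $\Delta([M])$, $\Delta([N])$, the braiding, and the two copies of $m$ gives
\[
\sum \frac{q^{-\langle A,D\rangle}\,P^M_{AB}\,P^N_{CD}\,P^X_{AC}\,P^Y_{BD}}{\aut(M)\,\aut(N)\,\aut(A)\,\aut(B)\,\aut(C)\,\aut(D)}\,[Y]\otimes [X].
\]
After canceling the common $\aut(M)\,\aut(N)$ prefactors, equating the coefficients of each basis vector $[Y]\otimes[X]$ reduces the whole statement to the identity
\[
\sum_{[E]} \frac{P^E_{MN}\,P^E_{XY}}{\aut(E)} = \sum_{[A],[B],[C],[D]} \frac{q^{-\langle A,D\rangle}\,P^M_{AB}\,P^N_{CD}\,P^X_{AC}\,P^Y_{BD}}{\aut(A)\,\aut(B)\,\aut(C)\,\aut(D)},
\]
which is exactly Green's Formula (Proposition \ref{GF}).

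The conceptual obstacle that had to be overcome already lies earlier: choosing the comultiplication to be the $\Vect$-opposite of Ringel's convention, so that the object appearing on the left of the tensor product is the sub-object $N$ rather than the quotient $M$. This is what forces the two strands that cross in the bialgebra diagram to carry the grades $[A]$ and $[D]$ (the outer terms in Green's Formula) rather than $[B]$ and $[C]$, so that the braiding coefficient $q^{-\langle A,D\rangle}$ from Theorem \ref{BMC} matches the coefficient appearing on the right of Green's Formula on the nose. Once this matching is in place, the proof of the lemma is just bookkeeping. Since the sum on the right of Green's Formula is finite when $Q$ is a simply laced Dynkin quiver, no convergence issues arise, and the verification is complete.
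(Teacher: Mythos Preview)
Your proposal is correct and follows essentially the same approach as the paper: expand $\Delta\circ m$ and $(m\otimes m)\circ(1\otimes B\otimes 1)\circ(\Delta\otimes\Delta)$ on basis elements, observe that the braiding contributes exactly $q^{-\langle A,D\rangle}$, and reduce the resulting coefficient identity to Green's Formula. Your additional remark about why the opposite comultiplication forces the crossing strands to carry grades $[A]$ and $[D]$ is the same observation the paper makes just before stating the lemma.
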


\begin{proof}
All the hard work for this proof was done in proving Green's Formula. We now just need to check that Green's Formula gives us the bialgebra compatibility. First we will multiply two objects, then comultiply the result to get:
\[\begin{array}{rl}
\Delta([M]\cdot[N]) & = \displaystyle{\sum_{[E]} \frac{P^E_{MN}}{\aut(M)\aut(N)}\Delta([E])} \\
                    & = \displaystyle{\sum_{[X],[Y]}\sum_{[E]} \frac{P^E_{MN}P^E_{XY}}{\aut(M)\aut(N)\aut(E)}[Y]\ten [X]}\\
\end{array}
\]
On the other hand, if we first comultiply each object, then multiply the resulting tensor products we have:
\[\Delta([M])\cdot \Delta([N]) = \sum_{[A],[B],[C],[D]} \frac{P^M_{AB}P^N_{CD}}{\aut(M)\aut(N)} ([B]\ten[A])\cdot ([D]\ten [C])\]
To continue, we need to remember the in our category $\Vect^K$ the braiding is non-trivial. This means that if we want to extend the multiplication on $\mathcal{H}$ to $\mathcal{H}\ten \mathcal{H}$ we must include the braiding coefficient. Specifically, we get the formula:
\[([B]\ten [A])\cdot ([D]\ten [C]) = q^{-\langle A,D\rangle} [B]\cdot[D]\ten [A]\cdot[C]\]
When substituted above, this yields:
\[\begin{array}{c}
\displaystyle{\sum_{[A],[B],[C],[D]} \frac{P^M_{AB}P^N_{CD}}{\aut(M)\aut(N)} ([B]\ten[A])\cdot ([D]\ten [C])}\\
 = \displaystyle{\sum_{[A],[B],[C],[D]} q^{-\langle A,D\rangle}\frac{P^M_{AB}P^N_{CD}}{\aut(M)\aut(N)} [B]\cdot[D]\ten [A]\cdot[C]}\\
 = \displaystyle{\sum_{[X],[Y]}\sum_{[A],[B],[C],[D]}\frac{q^{-\langle A,D\rangle} P^{M}_{AB}P^{N}_{CD}P^{X}_{AC}P^{Y}_{BD}}{\aut(M)\aut(N)\aut(A)\aut(B)\aut(C)\aut(D)}[Y]\ten[X]}\\
\end{array}
\]
Thus, Green's formula give the equality of the two sides.
\end{proof}

For completeness, we will also define an antipode for this bialgebra object to make it a Hopf object. This map is also a morphism in $\Vect^K$ since it clearly preserves the grading.
\begin{lemma}
The map $S:\Hall\to \Hall$ defined on generators by:
\[S([M])=-[M]\]
is a $K$-grade preserving linear operator, and is an antipode for $\Hall$. Thus $\Hall$ is a Hopf algebra object in $\Vect^K$.
\end{lemma}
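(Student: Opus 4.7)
The lemma bundles two claims: grade preservation of $S$ and that $S$ is an antipode. Grade preservation is immediate, since $[M]$ and $-[M]$ have the same image $\underline{m} \in K$ and therefore lie in the same graded component; the scalar $-1$ acts grade-wise. So the real content of the lemma is the antipode property.

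My reading of the setup is the following. Fix the unit $\eta\maps\R \to \Hall$ sending $1 \mapsto [0]$ and the counit $\epsilon\maps\Hall \to \R$ sending $[M] \mapsto \delta_{[M],[0]}$; both are clearly grade-preserving in $\Vect^K$. Since Ringel's theorem identifies $\Hall$ with $U_q^+(\g)$ and presents it as the algebra generated by the classes $[s]$ of simple objects, I interpret the formula $S([M]) = -[M]$ as the rule defining $S$ on these generators, with $S$ then extended to all of $\Hall$ by the braided anti-algebra homomorphism law $S \circ m = m \circ B_{\Hall,\Hall} \circ (S \otimes S)$.

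The main steps I would carry out are: (i) show every simple class is primitive. If $s$ is simple, the only short exact sequences $0 \to N \to s \to M \to 0$ have $(N,M)\in\{(s,0),(0,s)\}$, each contributing $P^{s}_{M,N} = \aut(s)$, so
\[ \Delta([s]) = [0]\otimes[s] + [s]\otimes[0]. \]
(ii) Verify the antipode axiom on generators. Using $S([0]) = [0]$ (forced by the unit axiom) and $S([s]) = -[s]$,
\[ m \circ (S \otimes \id) \circ \Delta([s]) = S([0])\cdot[s] + S([s])\cdot[0] = [s] - [s] = 0 = \eta\circ\epsilon([s]), \]
and symmetrically for $m \circ (\id \otimes S) \circ \Delta$. (iii) Extend inductively. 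For a product $xy$ of lower-length elements on which the antipode axiom is already known, the braided bialgebra axiom lets one rewrite $\Delta(xy)$ in terms of $\Delta(x)$ and $\Delta(y)$ with a single braiding insertion, and standard braided-Hopf manipulations close the induction.

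The hard part will be two delicate points in steps (ii)--(iii). First, the anti-homomorphism extension of $S$ must be consistent with the relations holding among the $[s]$ in $\Hall$ (the quantum Serre relations inherited under Ringel's isomorphism); the cleanest route is to transport the well-known antipode of $U_q^+(\g)$ across this isomorphism, which makes well-definedness automatic but is somewhat unsatisfying. Second, the inductive step produces braiding coefficients $q^{-\langle\underline{m},\underline{n}\rangle}$ exactly when $S \otimes \id$ is pushed past a coproduct of a product; these are precisely the coefficients whose internal consistency is secured by Green's formula and by the braiding on $\Vect^K$ constructed in Theorem \ref{BMC}, so the calculation should close, but the bookkeeping of braiding factors is the genuine labor.
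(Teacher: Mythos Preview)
The paper offers no proof of this lemma beyond the one-sentence remark preceding it that grade preservation is clear; your proposal is supplying the missing argument.

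Your reading of the statement is the only viable one: ``generators'' must mean the algebra generators $[s]$ for $s$ simple, with $S$ then extended as a braided anti-algebra map. Interpreting the formula as $S=-\id$ on the full basis fails immediately: it would give $S([0])=-[0]$, violating $S\circ\eta=\eta$, and for $E=s_1\oplus s_2$ with $s_1,s_2$ non-isomorphic simples and $\Ext^1(s_i,s_j)=0$ one computes $m\circ(S\otimes\id)\circ\Delta([E])=-4[E]\neq 0$. Your steps (i) and (ii) are correct and cleanly argued.

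For step (iii), the route you outline works, but the well-definedness worry you flag (whether the braided anti-multiplicative extension respects the relations among the $[s]$) can be bypassed entirely. The Hall algebra is a \emph{connected} bialgebra object in $\Vect^K$: the only representation with trivial class in $K$ is the zero representation, so the grade-zero piece is $\R\cdot[0]$. Takeuchi's argument, which goes through unchanged in any braided monoidal category, shows that a connected graded bialgebra automatically has a unique antipode, given recursively for $E\neq 0$ by
\[
S([E])\;=\;-[E]\;-\;\sum_{\substack{[M],[N]\\ M,N\neq 0}}\frac{P^{E}_{MN}}{\aut(E)}\,S([N])\cdot[M],
\]
the recursion terminating because the $K$-grading takes values in the positive cone $\N^{n}\subset K$ (with $n$ the number of vertices of $Q$) and each $[N]$ appearing has strictly smaller grade than $[E]$. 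This makes well-definedness automatic, avoids transporting anything through the Ringel isomorphism, and recovers $S([s])=-[s]$ on the primitive generators as a consequence rather than a definition needing to be checked against relations.
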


It is possible to generalize these results to other abelian categories, provided they obey the same finiteness properties as $\Rep(Q)$.

\begin{theorem}
Let $\mathcal{A}$ be an abelian, $k$-linear, hereditary category. Let $K=K_0(\mathcal{A})$ be its Grothendieck group, and suppose $\dim\Ext^i(M,N)$ is finite for all objects $M,N \in\mathcal{A}$. If the sum 
\[\sum_{[M],[N]} 
\frac{P_{MN}^E}{ \aut(E)}\, [N]\ten [M]\]
is finite for all objects $E\in \mathcal{A}$, then the Hall algebra $\mathcal{H}(\mathcal{A})$ is a Hopf object in $\Vect^K$.
\end{theorem}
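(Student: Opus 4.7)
The plan is to re-run the proof of Theorem \ref{HO} with $\Rep(Q)$ replaced by $\mathcal{A}$, noting that each ingredient has already been prepared in sufficient generality. Specifically, I would (i) invoke Theorem \ref{BMC} to build $\Vect^K$ from $\mathcal{A}$, (ii) check that the underlying vector space $\R[\u{X}]$ together with the maps $m$, $\Delta$, and $S$ lives in $\Vect^K$, and (iii) reduce the Hopf axioms to inputs already available: associativity and coassociativity from Ringel's general counting argument, the bialgebra compatibility from Green's formula (Proposition \ref{GF}), and the antipode identity exactly as in the earlier lemma.

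For the setup, the hypotheses that $\mathcal{A}$ is hereditary and has finite $\dim\Ext^i(M,N)$ are precisely what is needed to define the Euler form $\langle \cdot, \cdot \rangle$ on $K = K_0(\mathcal{A})$ and then to invoke Theorem \ref{BMC}, producing the braided monoidal category $\Vect^K$. Writing $X$ for the underlying groupoid of $\mathcal{A}$, the space $\R[\u{X}]$ is $K$-graded by sending each basis element $[M]$ to its class $\u{m} \in K$. The multiplication sum over $[E]$ is finite because isomorphism classes of middle terms of extensions $0 \to N \to E \to M \to 0$ are bounded by the finite set $\Ext^1(M, N)$; the comultiplication sum is finite by hypothesis. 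Both $m$ and $\Delta$ preserve grades because any short exact sequence contributing a nonzero $P^E_{MN}$ forces $\u{n} + \u{m} = \u{e}$ in $K$, and $S([M]) = -[M]$ is trivially grade-preserving.

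The heart of the argument is the bialgebra compatibility, which as in the $\Rep(Q)$ case reduces to Green's formula, with the braiding of $\Vect^K$ supplying the factor $q^{-\langle A, D \rangle}$ on the tensor-product side. Thus the main obstacle is extending Green's formula itself to arbitrary $\mathcal{A}$. Ringel's homological proof uses only the hereditary hypothesis, finiteness of the groups $\Ext^i(M, N)$, and finiteness of the relevant sums of the $P^E_{MN}/\aut(E)$ — precisely the hypotheses of our theorem — so it transfers in this generality, though some care is required to confirm that no intermediate sum appearing in the proof implicitly relies on a property specific to $\Rep(Q)$. Once Green's formula is in hand, the computation from the proof of the bialgebra lemma goes through unchanged, and the antipode identity follows exactly as for $\Rep(Q)$, establishing that $\mathcal{H}(\mathcal{A})$ is a Hopf algebra object in $\Vect^K$.
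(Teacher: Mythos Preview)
Your proposal is correct and follows essentially the same approach as the paper: the paper's own proof simply says that examining the proofs of Theorems \ref{BMC} and \ref{HO} shows these hypotheses are exactly what is needed to generalize from $\Rep(Q)$ to $\mathcal{A}$, with the hereditary condition being required for Green's formula. You have spelled out this examination in more detail than the paper does, but the strategy is identical.
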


\begin{proof}
Examining the proof of Theorems \ref{BMC} and \ref{HO}, we see these are the conditions that we need to generalize the result from the case $\mathcal{A} = \Rep(Q)$ to other abelian categories. Specifically, we need that $\mathcal{A}$ is hereditary to prove Green's Theorem.
\end{proof}
% Date Last Edited: April 4th, 2011

\chapter{Groupoidification}\label{groupoidification}
In this chapter, we will give a brief summary of the groupoidification program as it pertains to Hall algebras. For our purposes we will only provide the relevant definitions and theorems, written in the form needed for this example. What we mean by this, is that we will be making two convention choices; namely, we will work with `homology' and `$\alpha=1$'. The meaning of these is only necessary if one wishes to compare the work here to the more general form of groupoidification described by Baez, Hoffnung, and the author in Higher Dimensional Algebra VII: Groupoidification \cite{BaezHoffnungWalker:2009HDA7} (henceforth denoted HDA 7). The work here is selfcontained, except that we omit the proofs of most theorems to conserve space.

The general idea of groupoidification is to replace vector spaces with groupoids and linear operators with some kind of map between groupoids. As we will see, the correct type of morphism between groupoids will be a `span'. We will describe a systematic process for turning groupoids
into vector spaces and `nice' spans into linear operators. This process, `degroupoidification', is in fact a kind of functor.
`Groupoidification' is the attempt to {\it undo} this functor.  To
`groupoidify' a piece of linear algebra means to take some structure
built from vector spaces and linear operators and try to find
interesting groupoids and spans that degroupoidify to give this
structure.  So, to understand groupoidification, we need to master
degroupoidification.

We begin by describing how to turn a groupoid into a vector space.  In
what follows, all our groupoids will be \textbf{essentially small}.  This
means that they have a {\it set} of isomorphism classes of objects,
not a proper class.  We also assume our groupoids are \textbf{locally
finite}: given any pair of objects, the set of morphisms from one object 
to the other is finite.

\begin{definition}
Given a groupoid $X$, let $\u{X}$ be the set of isomorphism
classes of objects of $X$.
\end{definition}
\begin{definition}
\label{vectorspace}
Given a groupoid $X$, let the {\bf degroupoidification} of $X$ be $\R[\u{X}]$,
the vector space with basis $\u{X}$.
\end{definition}

A nice example is the groupoid of finite sets and bijections:
\begin{example}
\label{power_series}
Let $E$ be the groupoid of finite sets and bijections. Then
$\u{E}\iso \N$, so $\R[\u{E}]\cong \R[x]$, the vector space of polynomials in one variable.
\end{example}

A sufficiently nice groupoid over a groupoid $X$ will give a vector
in $\R[\u{X}]$.  To construct this, we use the concept of
groupoid cardinality:

\begin{definition}\label{cardinality}
The {\bf cardinality} of a groupoid $X$ is
\[ |X| = \sum_{[x]\in \u{X}} \frac{1}{|\Aut(x)|} \]
where $|\Aut(x)|$ is the cardinality of the automorphism group of an object
$x$ in $X$.  If this sum diverges, we say $|X| = \infty$.
\end{definition}

The cardinality of a groupoid $X$ is a well-defined nonnegative rational
number whenever $\u{X}$ and all the automorphism groups of
objects in $X$ are finite.  More generally, we say:

\begin{definition}
A groupoid $X$ is {\bf tame} if it is essentially small,
locally finite, and $|X| < \infty$.
\end{definition}
\noindent

We also have an alternate formula for groupoid cardinality when the groupoid is tame.

\begin{lemma}\label{ALTCARD}
If $X$ is a tame groupoid with finitely many objects in each isomorphism
class, then 
\[ |X| 
= \sum_{x \in X} \frac{1}{|\Mor(x,-)|} \]
where $\Mor(x,-) = \bigcup_{y\in X}\hom(x,y)$ is
the set of morphisms whose source is the object $x \in X$.  
\end{lemma}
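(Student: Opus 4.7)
The plan is to reorganize the proposed sum over all objects $x\in X$ by first grouping $x$ according to its isomorphism class, so that the inner sum becomes a count over a single class. This should exactly match the original definition from \ref{cardinality} once we identify what $|\Mor(x,-)|$ really is for $x$ in a fixed isomorphism class.

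First I would analyze $\Mor(x,-)$ for a single object $x$. Since $X$ is a groupoid, every morphism out of $x$ is an isomorphism, so $\hom(x,y)$ is nonempty exactly when $y\cong x$. Next, for any such $y$, choosing a fixed isomorphism $\varphi\maps x\to y$ gives a bijection $\Aut(x)\to \hom(x,y)$ by $g\mapsto \varphi\circ g$; in particular $|\hom(x,y)|=|\Aut(x)|$, which is finite since $X$ is locally finite. Writing $[x]$ for the isomorphism class of $x$ and using the hypothesis that each such class is finite, I get
\[ |\Mor(x,-)|\;=\;\sum_{y\in[x]}|\hom(x,y)|\;=\;|[x]|\cdot|\Aut(x)|. \]

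Now I would plug this into the right-hand side of the proposed formula and split the sum over $x\in X$ as a double sum, first over isomorphism classes $[x_0]\in\u{X}$ and then over representatives $x\in[x_0]$. Since $|\Aut(x)|=|\Aut(x_0)|$ for $x\in[x_0]$, the inner sum has $|[x_0]|$ identical terms each equal to $1/(|[x_0]|\cdot|\Aut(x_0)|)$, and these collapse to $1/|\Aut(x_0)|$. Summing over $[x_0]\in\u{X}$ then yields exactly Definition \ref{cardinality}:
\[ \sum_{x\in X}\frac{1}{|\Mor(x,-)|}\;=\;\sum_{[x_0]\in\u{X}}\frac{1}{|\Aut(x_0)|}\;=\;|X|. \]

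There is no real obstacle here; the only delicate point is bookkeeping the finiteness hypotheses to justify splitting the sum. Tameness gives $|X|<\infty$ and local finiteness ensures $|\Aut(x)|<\infty$, while the extra assumption that each isomorphism class is finite is exactly what makes $|\Mor(x,-)|$ finite (and nonzero) so that the reciprocals in the new formula make sense term-by-term. With those observations in place the argument reduces to the groupoid-theoretic fact that a hom-set between isomorphic objects is an $\Aut$-torsor.
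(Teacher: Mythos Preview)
Your argument is correct and is exactly the natural proof: identify $|\Mor(x,-)|=|[x]|\cdot|\Aut(x)|$ using that each $\hom(x,y)$ with $y\cong x$ is an $\Aut(x)$-torsor, then collapse the double sum. The paper does not give its own proof here but simply cites Lemma~5.6 of HDA~7 \cite{BaezHoffnungWalker:2009HDA7}, whose argument is essentially the one you have written.
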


\begin{proof}
See the proof of Lemma 5.6 in HDA 7 \cite{BaezHoffnungWalker:2009HDA7}.
\end{proof}
It is also important to note that groupoid cardinality is well defined.

\begin{lemma}\label{EQUIVGRPD}
Given equivalent groupoids $X$ and $Y$, $|X| = |Y|$.
\end{lemma}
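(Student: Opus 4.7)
The plan is to show that an equivalence of groupoids induces a bijection on isomorphism classes which preserves the cardinalities of automorphism groups. Once that is done, the sum defining $|X|$ matches the sum defining $|Y|$ term by term.

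First I would fix an equivalence of groupoids $F \maps X \to Y$, which by definition is a functor that is essentially surjective and fully faithful. Essential surjectivity means every $y \in Y$ is isomorphic to $F(x)$ for some $x \in X$, so the induced map $\underline{F} \maps \underline{X} \to \underline{Y}$ sending $[x]$ to $[F(x)]$ is surjective. For injectivity, suppose $[F(x_1)] = [F(x_2)]$, so there is an isomorphism $g \maps F(x_1) \to F(x_2)$ in $Y$. Since $F$ is fully faithful, $g = F(f)$ for a unique morphism $f \maps x_1 \to x_2$ in $X$; because $X$ is a groupoid and $F$ reflects isomorphisms (being fully faithful), $f$ is itself an isomorphism, so $[x_1] = [x_2]$. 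Thus $\underline{F}$ is a bijection.

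Next I would use full faithfulness to observe that for every object $x \in X$, the map $F \maps \Aut(x) \to \Aut(F(x))$ is a group isomorphism: it is a bijection on hom-sets by fully faithfulness, sends the identity to the identity, and preserves composition. In particular $|\Aut(x)| = |\Aut(F(x))|$.

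Finally I would combine these two observations. Choose a set of representatives $\{x_i\}$ for the isomorphism classes in $X$; then $\{F(x_i)\}$ is a set of representatives for the isomorphism classes in $Y$, and
\[ |X| = \sum_{[x_i] \in \underline{X}} \frac{1}{|\Aut(x_i)|} = \sum_{[F(x_i)] \in \underline{Y}} \frac{1}{|\Aut(F(x_i))|} = |Y|, \]
as desired. The only subtle step is the injectivity of $\underline{F}$, which relies on the fact that in a groupoid every morphism is invertible, so fully faithful functors reflect isomorphisms; everything else is a direct bookkeeping of the definition.
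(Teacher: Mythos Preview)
Your argument is correct and is the standard direct proof: an equivalence induces a bijection on isomorphism classes and an isomorphism of automorphism groups at each object, so the defining sums agree term by term. The paper itself does not give a proof here but simply cites Lemma~A.13 of HDA~7 \cite{BaezHoffnungWalker:2009HDA7}, so there is nothing substantive to compare; your self-contained argument is exactly what one would expect such a reference to unpack. One small remark: you need not invoke that fully faithful functors reflect isomorphisms, since in a groupoid every morphism is already an isomorphism---the morphism $f$ you obtain from full faithfulness is automatically invertible.
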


\begin{proof}
See the proof of Lemma A.13 in HDA 7 \cite{BaezHoffnungWalker:2009HDA7}.
\end{proof}

The reason we use $\R$ rather than $\Q$ as our ground field is that
there are interesting groupoids whose cardinalities are irrational numbers.
The following example is fundamental:

\begin{example}
\textup{
The groupoid of finite sets $E$ has cardinality
\[ |E| ~=~ \sum_{n \in \N} \frac{1}{|S_n|} ~=~ 
\sum_{n \in \N} \frac{1}{n!} ~=~ e. \]
}
\end{example}

With the concept of groupoid cardinality in hand, we now describe
how to obtain a vector in $\R[\u{X}]$ 
from a sufficiently nice groupoid over $X$.

\begin{definition}
Given a groupoid $X$, a {\bf groupoid over $X$} is a groupoid $\Psi$ 
equipped with a functor $v \maps \Psi \to X$.
\end{definition}
\begin{definition}
Given a groupoid over $X$, say $v \maps \Psi \to X$, and an object $x \in X$,
we define the {\bf full inverse image} of $x$,
denoted $v^{-1}(x)$, to be the groupoid where:
\begin{itemize}
\item
an object is an object $a \in \Psi$ such that $v(a) \cong x$;
\item
a morphism $f \maps a \to a'$ is any morphism in $\Psi$ from $a$ to
$a'$.
\end{itemize}
\end{definition}
\begin{definition}
A groupoid over $X$, say $v \maps \Psi \to X$, is {\bf tame} if the 
groupoid $v^{-1}(x)$ is tame for all $x\in X$. 
\end{definition}

\noindent 
We sometimes loosely say that $\Psi$ is a tame groupoid over $X$.
When we do this, we are referring to a functor $v \maps \Psi \to X$
that is tame in the above sense.  We do not mean that $\Psi$ is tame
as a groupoid.

We also need to remember that a vector in $\R[\u{X}]$ is a finite linear combination of basis vectors. Another way to think of this is to consider the vector as a functor with finite support. We then need to describe groupoids over $X$ with the same property.

\begin{definition}
\label{degroupoidification_of_vectors}
Given a tame groupoid over $X$, say $v \maps \Psi\to X$, 
there is a function $\utilde{\Psi}\maps \u{X}\to \R$ 
defined by:
\[ \utilde{\Psi}([x]) = |\Aut(x)||v^{-1}(x)|. \]
We say that a tame groupoid $\Psi$ over $X$ is {\bf finitely supported} 
if $\utilde{\Psi}$ is a finitely supported function on 
$\u{X}$. In this case $\utilde{\Psi}\in \R[\u{X}]$.
\end{definition}

Both addition and scalar multiplication of vectors have groupoidified
analogues.  We can add two groupoids $\Phi$, $\Psi$ over $X$ by taking
their coproduct, i.e., the disjoint union of $\Phi$ and $\Psi$ with
the obvious map to $X$:
\[
\xymatrix{
\Phi + \Psi \ar[d] \\
X
}
\]
We then have:
\begin{proposition}
Given finitely supported groupoids $\Phi$ and $\Psi$ over $X$,
\[\utilde{\Phi + \Psi} = \utilde{\Phi} + \utilde{\Psi}.\]
\end{proposition}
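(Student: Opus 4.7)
The plan is to unpack the definition of $\utilde{\Phi + \Psi}$ pointwise on isomorphism classes $[x] \in \u{X}$ and reduce the identity to two elementary facts: the full inverse image commutes with coproducts, and groupoid cardinality is additive over coproducts.

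First I would let $v \maps \Phi \to X$ and $w \maps \Psi \to X$ denote the structure functors, so that $\Phi + \Psi$ is equipped with the functor $v + w \maps \Phi + \Psi \to X$ induced by the universal property of the coproduct. Fixing an object $x \in X$, the objects of $(v+w)^{-1}(x)$ are those objects of $\Phi + \Psi$ sent by $v+w$ to something isomorphic to $x$, and by the explicit description of the coproduct of groupoids (disjoint union on both objects and morphisms) these split cleanly into the objects of $v^{-1}(x)$ together with the objects of $w^{-1}(x)$, with no morphisms between the two pieces. This gives a canonical isomorphism of groupoids $(v+w)^{-1}(x) \iso v^{-1}(x) + w^{-1}(x)$.

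Next I would invoke the sum formula in Definition \ref{cardinality}: since the isomorphism classes in a coproduct of groupoids are the disjoint union of isomorphism classes, groupoid cardinality is manifestly additive, yielding $|v^{-1}(x) + w^{-1}(x)| = |v^{-1}(x)| + |w^{-1}(x)|$. Multiplying by $|\Aut(x)|$ and unfolding Definition \ref{degroupoidification_of_vectors} gives the pointwise equality $\utilde{\Phi+\Psi}([x]) = \utilde{\Phi}([x]) + \utilde{\Psi}([x])$ on $\u{X}$.

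Finally, I would note that the support of $\utilde{\Phi+\Psi}$ is contained in the union of the supports of $\utilde{\Phi}$ and $\utilde{\Psi}$, so finite support is inherited from the hypotheses on $\Phi$ and $\Psi$, and the identity genuinely takes place in $\R[\u{X}]$. There is no real obstacle here; the only care required is in matching the bookkeeping of the induced functor $v+w$ with the pointwise cardinality computation, and this is entirely formal once one has the description of $\Phi + \Psi$ as a disjoint union of groupoids over $X$.
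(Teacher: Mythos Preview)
Your proof is correct. The paper itself does not give an inline argument but simply cites Lemma~5.4 of HDA~7 \cite{BaezHoffnungWalker:2009HDA7}; your direct computation---splitting $(v+w)^{-1}(x)$ as the coproduct $v^{-1}(x) + w^{-1}(x)$ and using additivity of groupoid cardinality over coproducts---is exactly the expected elementary argument behind that citation.
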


\begin{proof}
See the proof of Lemma 5.4 in HDA 7 \cite{BaezHoffnungWalker:2009HDA7}.
\end{proof}

We can also multiply a groupoid over $X$ by a `scalar'---that is, a
fixed groupoid.  Given a groupoid over $X$, say $v \maps \Phi\to X$,
and a groupoid $\Lambda$, the cartesian product $\Lambda\times \Psi$
becomes a groupoid over $X$ as follows:
\[\xymatrix{
\Lambda\times \Psi \ar[d]^{v\pi_2}\\ X\\ }\] 
where $\pi_2 \maps \Lambda\times \Psi\to \Psi$ is projection onto the
second factor.  We then have:

\begin{proposition}
Given a tame groupoid $\Lambda$ and a finitely supported groupoid $\Psi$ over $X$, the groupoid
$\Lambda \times \Psi$ over $X$ is finitely supported and satisfies
\[\utilde{\Lambda \times \Psi} = |\Lambda|\utilde{\Psi}.\]
\end{proposition}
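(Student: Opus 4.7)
The plan is to reduce the claim to a computation of the full inverse image and then apply multiplicativity of groupoid cardinality. First I would identify the full inverse image of an object $x \in X$ under the functor $v \circ \pi_2 \maps \Lambda \times \Psi \to X$. An object of $(v\pi_2)^{-1}(x)$ is a pair $(\lambda, a)$ with $\lambda \in \Lambda$ and $a \in \Psi$ satisfying $v(a) \cong x$, and a morphism is just a pair of morphisms in $\Lambda$ and $\Psi$. So there is an obvious isomorphism of groupoids
\[ (v\pi_2)^{-1}(x) \;\cong\; \Lambda \times v^{-1}(x). \]

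Next I would show that this product groupoid is tame. Since $\Lambda$ is tame by hypothesis and $v^{-1}(x)$ is tame because $\Psi$ is a tame groupoid over $X$, both factors are essentially small and locally finite, and so is their cartesian product. For the cardinality, a direct application of Definition \ref{cardinality} (together with the fact that $\Aut_{\Lambda \times v^{-1}(x)}((\lambda,a)) = \Aut(\lambda) \times \Aut(a)$) gives
\[ |\Lambda \times v^{-1}(x)| \;=\; |\Lambda| \cdot |v^{-1}(x)|, \]
which is finite. Thus $\Lambda \times \Psi$ is tame over $X$.

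Combining these computations,
\[ \utilde{\Lambda \times \Psi}([x]) \;=\; |\Aut(x)| \cdot |(v\pi_2)^{-1}(x)| \;=\; |\Aut(x)| \cdot |\Lambda| \cdot |v^{-1}(x)| \;=\; |\Lambda| \cdot \utilde{\Psi}([x]), \]
which gives the desired equality of functions on $\u{X}$. Finally, for finite support, observe that $\utilde{\Lambda \times \Psi}$ is a scalar multiple of $\utilde{\Psi}$, and the latter has finite support by hypothesis; so $\utilde{\Lambda \times \Psi}$ has support contained in the support of $\utilde{\Psi}$, and in particular lies in $\R[\u{X}]$.

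The only step requiring any care is the cardinality identity $|\Lambda \times v^{-1}(x)| = |\Lambda| \cdot |v^{-1}(x)|$, which is a routine consequence of Definition \ref{cardinality} once one notices that isomorphism classes and automorphism groups in a product groupoid factor as products; everything else is bookkeeping. (The analogous fact in full generality appears in HDA 7 \cite{BaezHoffnungWalker:2009HDA7}, and I would simply cite it if a more streamlined presentation were wanted.)
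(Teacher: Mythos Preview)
Your proof is correct and is essentially the standard argument: the paper itself omits the proof and simply cites Proposition 6.3 of HDA~7 \cite{BaezHoffnungWalker:2009HDA7}, whose proof follows exactly the route you take (identify the full inverse image as $\Lambda \times v^{-1}(x)$ and use multiplicativity of groupoid cardinality for products). There is nothing to add.
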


\begin{proof}
See the proof of Proposition 6.3 in HDA 7 \cite{BaezHoffnungWalker:2009HDA7}.
\end{proof}

We have seen how degroupoidification turns a groupoid $X$ into a vector space
$\R[\u{X}]$.  Degroupoidification also turns any sufficiently 
nice span of groupoids into a linear operator.

\begin{definition}
Given groupoids $X$ and $Y$, a {\bf span} from $X$ to $Y$ is a diagram
\[\xymatrix{
 & S\ar[dl]_q\ar[dr]^p & \\
 Y & & X \\
}\]
where $S$ is groupoid and $p\maps S\to X$ and $q \maps S\to Y$ are functors.
\end{definition}

To turn a span of groupoids into a linear operator, we need a
construction called the `weak pullback'.  This construction will let
us apply a span from $X$ to $Y$ to a groupoid over $X$ to obtain a
groupoid over $Y$.  Then, since a finitely supported groupoid over $X$ gives a
vector in $\R[\u{X}]$, while a finitely supported groupoid over $Y$ gives a
vector in $\R[\u{Y}]$, a sufficiently nice span from $X$ to
$Y$ will give a map from $\R[\u{X}]$ to $\R[\u{Y}]$.
Moreover, this map will be linear.

As a warmup for understanding weak pullbacks for groupoids, we recall
ordinary pullbacks for sets, also called `fibered products'.
The data for constructing such a pullback is a pair of sets equipped
with functions to the same set:
\[
\xymatrix{
& T \ar[dr]_{q} & & S \ar[dl]^{p} & \\
 & & X & & 
}
\]  
The pullback is the set
\[ P = \lbrace (s,t) \in S \times T \, | \; p(s) = q(t) \rbrace  \]
together with the obvious projections $\pi_S \maps P \to S$ and 
$\pi_T \maps P \to T$.  The pullback makes this diamond commute:
\[
\xymatrix{
& & P \ar[dl]_{\pi_T} \ar[dr]^{\pi_S} & &\\
& T \ar[dr]_{q} & & S \ar[dl]^{p} & \\
 & & X & & 
}
\]  
and indeed it is the `universal solution' to the problem of finding
such a commutative diamond \cite{Mac Lane}.

To generalize the pullback to groupoids, we need to weaken one
condition.  The data for constructing a weak pullback is a pair of
groupoids equipped with functors to the same groupoid:
\[
\xymatrix{
& T \ar[dr]_{q} & & S \ar[dl]^{p} & \\
 & & X & & 
}
\]  
But now we replace the {\it equation} in the definition of pullback
by a {\it specified isomorphism}.  So, we define the weak pullback 
$P$ to be the groupoid where an object is a triple $(s,t,\alpha)$ 
consisting of an object $s \in S$, an object $t \in T$, and an 
isomorphism $\alpha \maps p(s) \to q(t)$ in $X$.  A morphism
in $P$ from $(s,t,\alpha)$ to $(s',t',\alpha')$ consists of a morphism
$f \maps s \to s'$ in $S$ and a morphism $g \maps t \to t'$ in $T$
such that the following square commutes:
\[
\xymatrix{
p(s) \ar[d]_{p(f)} \ar[r]^{\alpha} & q(t) \ar[d]^{q(g)} \\
p(s') \ar[r]_{\alpha'} & q(t')
}
\]
Note that any set can be regarded as a {\bf discrete} groupoid:
one with only identity morphisms.  For discrete groupoids, the weak
pullback reduces to the ordinary pullback for sets.
Using the weak pullback, we can apply a span from $X$ to $Y$ to a groupoid
over $X$ and get a groupoid over $Y$.  Given a span of groupoids:
\[
\xymatrix{
& S\ar[dl]_{q} \ar[dr]^{p} & \\
Y & & X
}
\]
and a groupoid over $X$:
\[
\xymatrix{
   & \Psi \ar[dl]_{v} \\
   X &  
}
\]
we can take the weak pullback, which we call $S\Psi$:
\[
\xymatrix{
& & S\Psi \ar[dl]_{\pi_S}\ar[dr]^{\pi_{\Psi}} & \\
& S \ar[dl]_{q} \ar[dr]^{p} & & \Psi \ar[dl]_{v} \\
Y & & X &  
}
\]
and think of $S\Psi$ as a groupoid over $Y$:
\[
\xymatrix{
& S\Psi \ar[dl]_{q \pi_S}  \\
Y  
}
\]
This process will determine a linear operator from $\R[\u{X}]$ 
to $\R[\u{Y}]$ if the span $S$ is sufficiently nice:
\begin{definition}
A span
\[
\xymatrix{
& S\ar[dl]_{q} \ar[dr]^{p} & \\
Y & & X
}
\]
is {\bf tame} if $v \maps \Psi \to X$ being tame implies that 
$q\pi_{S} \maps S\Psi \to Y$ is tame.
\end{definition}

We also need a concept of a span which preserves groupoids with finite support.

\begin{definition}\label{finitetypespan}
A span:
\[\xymatrix{
& S \ar[dl]_{q} \ar[dr]^{p}& \\
Y & & X
}\]
is of {\bf finite type} if it is a tame span of groupoids and for any
finitely supported groupoid $\Psi$ over $X$, the groupoid $S\Psi$ over
$Y$ (formed by weak pullback) is also finitely supported.
\end{definition}

\begin{theorem}
Given a span of finite type:
\[
\xymatrix{
& S\ar[dl]_{q} \ar[dr]^{p} & \\
Y & & X
}
\]
there exists a unique linear operator
\[ \utilde{S} \maps \R[\u{X}] \to \R[\u{Y}] \]
such that
\[ \utilde{S}\utilde{\Psi} = \utilde{S\Psi} \]
whenever $\Psi$ is a groupoid over $X$ with finite support.
\end{theorem}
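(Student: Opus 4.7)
The plan is to define $\utilde{S}$ on basis vectors of $\R[\u{X}]$, extend by linearity (which yields uniqueness for free), and then verify $\utilde{S}\utilde{\Psi} = \utilde{S\Psi}$ for arbitrary finitely supported $\Psi$ by a decomposition argument. First, for each $[x] \in \u{X}$, I would choose a representative $x$ and form the one-object groupoid $\Psi_x$ with morphism set $\Aut(x)$, equipped with the evident inclusion $v_x \maps \Psi_x \to X$. A direct computation from Definition \ref{degroupoidification_of_vectors} shows $\utilde{\Psi_x} = [x]$, since $|v_x^{-1}(x)| = 1/|\Aut(x)|$ while $|v_x^{-1}(y)| = 0$ for $[y]\ne[x]$; Lemma \ref{EQUIVGRPD} guarantees independence from the choice of representative, up to equivalence over $X$. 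Because $S$ is of finite type and $\Psi_x$ is finitely supported, $S\Psi_x$ is finitely supported over $Y$, so I set $\utilde{S}([x]) := \utilde{S\Psi_x} \in \R[\u{Y}]$ and extend linearly. Any operator obeying the stated conclusion must send $[x] = \utilde{\Psi_x}$ to $\utilde{S\Psi_x}$, so uniqueness is immediate.

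Next I would establish the identity $\utilde{S}\utilde{\Psi} = \utilde{S\Psi}$ for an arbitrary finitely supported $\Psi$ with structure functor $v \maps \Psi \to X$. The key structural fact is that $\Psi$ decomposes, up to equivalence over $X$, as $\coprod_{[a] \in \u{\Psi}} \Phi_{[a]}$, where $\Phi_{[a]}$ denotes the one-object groupoid with automorphism group $\Aut_\Psi(a)$, mapped into $X$ via $v(a)$ and the group homomorphism induced by $v$. Two facts about weak pullback---that it distributes over coproducts and is invariant under equivalence over the base---give $S\Psi \simeq \coprod_{[a]} S\Phi_{[a]}$ as groupoids over $Y$, so $\utilde{S\Psi} = \sum_{[a]} \utilde{S\Phi_{[a]}}$ by Lemma \ref{EQUIVGRPD} and the additivity proposition preceding Definition \ref{finitetypespan}.

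The final and most delicate step is to compare each summand $\utilde{S\Phi_{[a]}}$ with the basis value $\utilde{S}([v(a)]) = \utilde{S\Psi_{v(a)}}$. An orbit--stabilizer analysis of the weak pullback---the objects of $S\Phi_{[a]}$ and $S\Psi_{v(a)}$ are parametrized in parallel, but their automorphism groups differ by a factor coming from the map $\Aut_\Psi(a) \to \Aut_X(v(a))$---yields
\[ \utilde{S\Phi_{[a]}} \;=\; \frac{|\Aut(v(a))|}{|\Aut_\Psi(a)|}\, \utilde{S\Psi_{v(a)}} \;=\; \frac{|\Aut(v(a))|}{|\Aut_\Psi(a)|}\, \utilde{S}([v(a)]). \]
Summing over $[a]$ and regrouping by $[x] = [v(a)]$, the inner sum $\sum_{[a]\colon v(a)\cong x} 1/|\Aut_\Psi(a)|$ is exactly $|v^{-1}(x)|$ by Lemma \ref{ALTCARD}, so
\[ \utilde{S\Psi} \;=\; \sum_{[x]} |\Aut(x)|\,|v^{-1}(x)|\,\utilde{S}([x]) \;=\; \sum_{[x]} \utilde{\Psi}([x])\,\utilde{S}([x]) \;=\; \utilde{S}\,\utilde{\Psi}, \]
as required. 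The main obstacle will be the boxed scalar identity above: everything else is formal once one accepts the two properties of weak pullback, but comparing automorphism groups in $S\Phi_{[a]}$ versus $S\Psi_{v(a)}$ requires a careful count using the kernel and image of $\Aut_\Psi(a) \to \Aut_X(v(a))$, and this is the content of the corresponding lemma in HDA 7.
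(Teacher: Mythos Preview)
Your proposal is correct and is essentially the argument one finds in HDA~7, which is all the paper itself does here: its proof consists of the single line ``See the proof of Theorem~5.7 in HDA~7.'' Your outline---define $\utilde{S}$ on basis vectors via the one-object groupoids $\Psi_x$, decompose an arbitrary $\Psi$ as a coproduct of connected pieces, use that weak pullback respects coproducts and equivalence, and then reduce to the scalar comparison $\utilde{S\Phi_{[a]}} = \tfrac{|\Aut(v(a))|}{|\Aut_\Psi(a)|}\,\utilde{S\Psi_{v(a)}}$---is exactly the shape of that argument, and you correctly identify the scalar identity as the one place where real work happens.

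One small slip: when you regroup $\sum_{[a]:v(a)\cong x} 1/|\Aut_\Psi(a)| = |v^{-1}(x)|$, that is just Definition~\ref{cardinality} applied to the full inverse image, not Lemma~\ref{ALTCARD} (which is the alternate object-by-object formula). The mathematics is fine; only the citation is off.
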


\begin{proof}
See the proof of Theorem 5.7 in HDA 7 \cite{BaezHoffnungWalker:2009HDA7}.
\end{proof}

An explicit criterion for when a span is tame and of finite type is given in \cite{BaezHoffnungWalker:2009HDA7}. For our purposes we would like an explicit formula for the
operator corresponding to a tame span $S$ from $X$ to $Y$.  If
$\u{X}$ and $\u{Y}$ are finite, then
$\R[\u{X}]$ has a basis given by the isomorphism classes
$[x]$ in $X$, and similarly for $\R[\u{Y}]$.  With respect to
these bases, the matrix entries of $\utilde{S}$ are given as follows:
\begin{equation}
\label{matrix_entry}
\utilde{S}_{[y][x]} = 
\sum_{[s]\in\u{p^{-1}(x)}\bigcap \u{q^{-1}(y)} }\frac{|\Aut(y)|}{|\Aut(s)|}
\end{equation}
where $|\Aut(y)|$ is the set cardinality of the automorphism group of
$y \in Y$, and similarly for $|\Aut(s)|$.  Even when $\u{X}$
and $\u{Y}$ are not finite, we have the following formula for
$\utilde S$ applied to $\psi \in \R[{\u X}]$:
\begin{equation}
\label{operator_formula}
(\utilde{S} \psi)([y]) = 
\sum_{[x] \in \u{X}} \;\,
\sum_{[s]\in\u{p^{-1}(x)}\bigcap \u{q^{-1}(y)}}
\frac{|\Aut(y)|}{|\Aut(s)|} \,\, \psi([x]) \, .
\end{equation}

As with vectors, there are groupoidified analogues of addition and scalar
multiplication for operators.  Given two spans from $X$ to $Y$:
\[
\xymatrix{
& S\ar[dl]_{q_S} \ar[dr]^{p_S} & & & T\ar[dl]_{q_T} \ar[dr]^{p_T} & \\
Y & & X & Y & & X
}
\]
we can add them as follows.  By the universal property of the
coproduct we obtain from the right legs of the above spans a functor
from the disjoint union $S + T$ to $X$.  Similarly, from the left legs
of the above spans, we obtain a functor from $S + T$ to $Y$.  Thus, we
obtain a span
\[
\xymatrix{
& S + T\ar[dl] \ar[dr] & \\
Y & & X
}
\]
This addition of spans is compatible with degroupoidification:
\begin{proposition}
If $S$ and $T$ are of finite type from $X$ to $Y$, then so is $S + T$, and
\[    \utilde{S+T} = \utilde{S} + \utilde{T} .\]
\end{proposition}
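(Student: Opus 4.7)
The plan is to base everything on a single structural observation: weak pullback distributes over coproduct in the left leg of a span. Concretely, for any groupoid $v \maps \Psi \to X$, I would show there is an equivalence of groupoids over $Y$,
\[ (S+T)\Psi \;\simeq\; S\Psi \,+\, T\Psi. \]
This is immediate from unpacking the weak pullback: an object of $(S+T)\Psi$ is a triple $(r,a,\alpha)$ with $r$ an object of the disjoint union $S+T$, $a\in\Psi$, and $\alpha \maps p_{S+T}(r)\to v(a)$ an isomorphism in $X$. Every such $r$ lies in exactly one summand, and morphisms between triples lie entirely in the summand containing their endpoints, so triples partition into those with $r\in S$ and those with $r\in T$, giving the claimed coproduct decomposition. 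The projection to $Y$ on each summand is by construction the one used to view $S\Psi$ and $T\Psi$ as groupoids over $Y$, so the equivalence is over $Y$.

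Once this is in place, both parts of the statement are bookkeeping. For tameness, the full inverse image of any $y\in Y$ under $(S+T)\Psi \to Y$ is equivalent to the disjoint union of the corresponding full inverse images for $S\Psi$ and $T\Psi$; by Lemma \ref{EQUIVGRPD} and additivity of groupoid cardinality on coproducts, its cardinality is the finite sum of two finite numbers. Hence $(S+T)\Psi$ is tame whenever $\Psi$ is, so $S+T$ is a tame span. Similarly, if $\Psi$ is finitely supported over $X$, then by hypothesis $S\Psi$ and $T\Psi$ are finitely supported over $Y$, and the proposition $\utilde{\Phi+\Psi} = \utilde{\Phi} + \utilde{\Psi}$ combined with the equivalence above shows that $\utilde{(S+T)\Psi}$ is finitely supported. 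This verifies both conditions of Definition \ref{finitetypespan}, so $S+T$ is of finite type.

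For the operator identity, I would compute directly on any finitely supported groupoid $\Psi$ over $X$,
\[ \utilde{(S+T)}\,\utilde{\Psi} \;=\; \utilde{(S+T)\Psi} \;=\; \utilde{S\Psi + T\Psi} \;=\; \utilde{S\Psi} + \utilde{T\Psi} \;=\; \utilde{S}\,\utilde{\Psi} + \utilde{T}\,\utilde{\Psi}, \]
using, in order, the defining property of the operator associated to $S+T$, the key equivalence (with Lemma \ref{EQUIVGRPD}), the coproduct-to-sum proposition for finitely supported groupoids over $Y$, and the defining property of $\utilde{S}$ and $\utilde{T}$. Since every basis vector $[x] \in \R[\u{X}]$ is of the form $\utilde{\Psi}$ for a finitely supported $\Psi$ (take a one-object groupoid with automorphism group $\Aut(x)$ mapping to $x$), this identity determines the value of both sides on a basis, and linearity gives $\utilde{S+T} = \utilde{S} + \utilde{T}$. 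The only mild subtlety I anticipate is ensuring the equivalence of groupoids over $Y$ is genuinely an equivalence of groupoids and not merely a bijection on isomorphism classes, but this follows once one writes down the morphisms, since no mixing between $S$- and $T$-objects can occur.
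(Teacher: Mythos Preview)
Your argument is correct. The key observation that weak pullback distributes over the coproduct in the apex, $(S+T)\Psi \simeq S\Psi + T\Psi$ as groupoids over $Y$, is exactly the right structural fact, and your bookkeeping from there is sound: tameness and finite support pass through via additivity of groupoid cardinality, and the operator identity follows from the chain of equalities you wrote, together with the fact that basis vectors are realized by finitely supported groupoids over $X$.

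As for comparison with the paper: there is nothing to compare against here, since the paper does not prove this proposition in-line but simply cites Proposition~5.11 of HDA~7~\cite{BaezHoffnungWalker:2009HDA7}. Your argument is the natural one and is essentially what one finds in that reference. One small stylistic point: you do not actually need the final paragraph about realizing basis vectors, since the defining property of $\utilde{S+T}$ together with the uniqueness clause in the theorem characterizing operators from spans already pins it down once you know $\utilde{(S+T)}\utilde{\Psi} = (\utilde{S}+\utilde{T})\utilde{\Psi}$ for all finitely supported $\Psi$; but what you wrote is also fine.
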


\begin{proof}
See the proof of Proposition 5.11 in HDA 7 \cite{BaezHoffnungWalker:2009HDA7}.
\end{proof}

We can also multiply a span by a `scalar': that is, a fixed groupoid.
Given a groupoid $\Lambda$ and a span
\[
\xymatrix{
& S\ar[dl]_q \ar[dr]^p & \\
Y & & X
}
\]
we can multiply them to obtain a span
\[
\xymatrix{
& \Lambda \times S \ar[dl]_{q\pi_2} \ar[dr]^{p\pi_2} & \\
Y & & X
}
\]
Again, we have compatibility with degroupoidification:

\begin{proposition}
Given a tame groupoid $\Lambda$ and a span of finite type
\[
\xymatrix{
& S\ar[dl] \ar[dr] & \\
Y & & X
}
\]
then $\Lambda \times S$ is of finite type and
\[\utilde{\Lambda \times S} = |\Lambda| \, \utilde{S}.\]
\end{proposition}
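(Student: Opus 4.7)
The plan is to exploit the uniqueness clause of the preceding theorem on degroupoidification of spans, which asserts that for any span $T$ of finite type from $X$ to $Y$, the operator $\utilde{T}$ is the unique linear map satisfying $\utilde{T}\utilde{\Psi} = \utilde{T\Psi}$ for every finitely supported groupoid $\Psi$ over $X$. So it suffices to check that $\Lambda \times S$ is of finite type, and then that $|\Lambda|\,\utilde{S}$ satisfies this defining equation for $T = \Lambda \times S$.

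The crucial step is an equivalence of groupoids over $Y$:
\[ (\Lambda \times S)\Psi \;\simeq\; \Lambda \times (S\Psi). \]
The map $\Lambda \times S \to X$ in the span $\Lambda \times S$ is $p \circ \pi_2$, so $\Lambda$ plays no role in matching with $v \maps \Psi \to X$. An object of the left-hand side is thus a quadruple $(\lambda, s, \psi, \alpha)$ with $\alpha\maps p(s) \to v(\psi)$, and this is manifestly the same data as an object of the right-hand side; morphisms correspond in the same way, and the functors to $Y$, each sending $(\lambda, s, \psi, \alpha) \mapsto q(s)$, agree under this equivalence.

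Using this, I would verify finite type of $\Lambda \times S$: if $\Psi$ is a tame, finitely supported groupoid over $X$, then $S\Psi$ over $Y$ is tame and finitely supported since $S$ is of finite type, and then $\Lambda \times (S\Psi)$ retains these properties by the earlier propositions on scalar multiplication of groupoids over $Y$ and on tameness under cartesian product with a tame groupoid. Since equivalent groupoids over $Y$ are sent to the same vector by $\utilde{\cdot}$, the chain of equalities
\[ \utilde{(\Lambda \times S)\Psi} = \utilde{\Lambda \times (S\Psi)} = |\Lambda|\,\utilde{S\Psi} = |\Lambda|\,\utilde{S}\,\utilde{\Psi} = (|\Lambda|\,\utilde{S})\,\utilde{\Psi} \]
holds, invoking in turn the equivalence, the scalar multiplication formula for finitely supported groupoids over $Y$, and the defining equation for $\utilde{S}$. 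By uniqueness, $\utilde{\Lambda \times S} = |\Lambda|\,\utilde{S}$.

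The only point that requires any care is the equivalence of weak pullbacks, which amounts to the observation that $\Lambda$ is a \emph{free rider} in the weak pullback construction: it is attached by cartesian product rather than by any map into $X$, so it passes through untouched. Everything else is formal, and no new calculations beyond those underlying the earlier propositions are needed.
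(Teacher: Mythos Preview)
Your proof is correct and is precisely the natural argument: the weak pullback $(\Lambda \times S)\Psi$ is canonically equivalent to $\Lambda \times (S\Psi)$ over $Y$ because $\Lambda$ does not interact with the leg into $X$, and then the scalar-multiplication formula for groupoids over $Y$ together with the uniqueness clause of the degroupoidification theorem finishes it. The paper itself does not give a proof here; it simply cites Proposition~6.4 of HDA~7, so there is nothing in the present paper to compare against, but your argument is exactly the one that reference unpacks to.
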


\begin{proof}
See the proof of Proposition 6.4 in HDA 7 \cite{BaezHoffnungWalker:2009HDA7}.
\end{proof}

Next we turn to the all-important process of {\it composing} spans.
This is the groupoidified analogue of matrix multiplication.  Suppose
we have a span from $X$ to $Y$ and a span from $Y$ to $Z$:
\[
\xymatrix{
& T\ar[dl]_{q_T} \ar[dr]^{p_T} & & S \ar[dl]_{q_S} \ar[dr]^{p_S}& \\
Z & & Y & & X
}
\]  
Then we say these spans are {\bf composable}.
In this case we can form a weak pullback in the middle:
\[
\xymatrix{
& & TS \ar[dl]_{\pi_T} \ar[dr]^{\pi_S} & &\\
& T\ar[dl]_{q_T} \ar[dr]^{p_T} & & S \ar[dl]_{q_S} \ar[dr]^{p_S}& \\
Z & & Y & & X
}
\]  
which gives a span from $X$ to $Z$:
\[\xymatrix{
 & TS \ar[dl]_{q_T \pi_T} \ar[dr]^{p_S\pi_S} & \\
 Z & & X \\
}\]
called the {\bf composite} $TS$.

When all the groupoids involved are discrete, the spans $S$ and $T$
are just matrices of sets.  We urge
the reader to check that in this case, the process of composing spans
is really just matrix multiplication, with cartesian product of sets
taking the place of multiplication of numbers, and disjoint union of
sets taking the place of addition:
\[    (TS)^k_j = \coprod_{j \in Y} T^k_j \times S^j_i . \]
So, composing spans of groupoids is a generalization of matrix
multiplication, with weak pullback playing the role of summing
over the repeated index $j$ in the formula above.

So, it should not be surprising that degroupoidification sends
a composite of tame spans to the composite of their corresponding
operators:

\begin{proposition} If $S$ and $T$ are composable spans of finite type:
\[
\xymatrix{
& T\ar[dl]_{q_T} \ar[dr]^{p_T} & & S \ar[dl]_{q_S} \ar[dr]^{p_S}& \\
Z & & Y & & X
}
\]  
then the composite span
\[\xymatrix{
 & TS \ar[dl]_{q_T \pi_T} \ar[dr]^{p_S\pi_S} & \\
 Z & & X \\
}\]
is also of finite type, and
\[       \utilde{TS} = \utilde{T} \utilde{S}  .\]
\end{proposition}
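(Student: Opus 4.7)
The plan is to reduce the claim to two ingredients: the associativity of the weak pullback (up to equivalence) and the uniqueness clause in the theorem that converts spans of finite type into linear operators. The two assertions of the proposition, finite-typeness of $TS$ and the identity $\utilde{TS}=\utilde{T}\,\utilde{S}$, then drop out in parallel.

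First I would prove the following associativity fact: given a finitely supported groupoid $v\maps \Psi\to X$, the two iterated weak pullbacks $(TS)\Psi$ and $T(S\Psi)$, each regarded as a groupoid over $Z$ via the appropriate composite of left legs, are equivalent over $Z$. This is the standard bicategorical fact that weak pullbacks compose to weak pullbacks, and the equivalence is built from the universal property of the weak pullback applied first to form $S\Psi$, then to pull it back along $q_S\pi_S$ against $T$. One checks that both constructions classify the same data, namely triples $(t,s,a)$ with $t\in T$, $s\in S$, $a\in\Psi$ together with specified isomorphisms $p_T(t)\cong q_S(s)$ and $p_S(s)\cong v(a)$, plus the evident notion of morphism. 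By Lemma \ref{EQUIVGRPD}, equivalent groupoids have equal cardinality, so the full inverse images over any $z\in Z$ of the two composite functors have equal cardinality as well.

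Next I would establish that $TS$ is of finite type. Tameness: if $v\maps\Psi\to X$ is tame, then by hypothesis $S\Psi\to Y$ is tame, and hence by hypothesis again $T(S\Psi)\to Z$ is tame; by the equivalence above, so is $(TS)\Psi\to Z$. Finite support: if $\utilde{\Psi}$ is finitely supported on $\u{X}$, then $\utilde{S\Psi}$ is finitely supported on $\u{Y}$ (since $S$ is of finite type), and then $\utilde{T(S\Psi)}$ is finitely supported on $\u{Z}$ (since $T$ is of finite type); the equivalence transports this to $(TS)\Psi$.

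Finally I would verify the operator identity. The theorem guarantees a unique linear operator $\utilde{TS}\maps\R[\u{X}]\to\R[\u{Z}]$ characterized by $\utilde{TS}\,\utilde{\Psi}=\utilde{(TS)\Psi}$ for every finitely supported $\Psi$ over $X$. I would show that the composite $\utilde{T}\,\utilde{S}$ satisfies this same defining property: indeed, applying $\utilde{S}$ first gives $\utilde{S}\,\utilde{\Psi}=\utilde{S\Psi}$, and then applying $\utilde{T}$ (which is legitimate because $S\Psi$ is finitely supported) gives $\utilde{T}(\utilde{S\Psi})=\utilde{T(S\Psi)}$. By the associativity equivalence $T(S\Psi)\simeq (TS)\Psi$ over $Z$, and again by Lemma \ref{EQUIVGRPD}, the functions $\utilde{T(S\Psi)}$ and $\utilde{(TS)\Psi}$ agree on $\u{Z}$. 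Uniqueness then forces $\utilde{TS}=\utilde{T}\,\utilde{S}$.

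The main obstacle is the associativity step. Everything else is a direct invocation of hypotheses or the existence-and-uniqueness theorem, but the fact that weak pullbacks stack to a weak pullback, and that the resulting equivalence is actually over $Z$ (not merely as an abstract groupoid), requires one to be careful about the commuting isomorphisms that appear in objects of a weak pullback. Once that bicategorical bookkeeping is settled, both conclusions of the proposition follow uniformly.
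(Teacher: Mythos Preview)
Your argument is correct and is the standard one. The paper itself does not give a proof here; it simply cites Lemma~6.9 of HDA~7 \cite{BaezHoffnungWalker:2009HDA7}, and the proof there proceeds exactly along the lines you sketch: one shows that the iterated weak pullback $T(S\Psi)$ is equivalent, as a groupoid over $Z$, to $(TS)\Psi$, and then both the finite-type claim and the operator identity follow from the defining property of $\utilde{\;\;}$ together with invariance of groupoid cardinality under equivalence. One small point worth making explicit is that an equivalence of groupoids \emph{over} $Z$ restricts to an equivalence on each full inverse image $(-)^{-1}(z)$, which is what lets you transport both tameness and the value of $\utilde{(\cdot)}$ from $T(S\Psi)$ to $(TS)\Psi$; you gesture at this but it is the only place where one could be sloppy.
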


\begin{proof}
See the proof of Lemma 6.9 in HDA 7 \cite{BaezHoffnungWalker:2009HDA7}.
\end{proof}

What this means is that degroupoidification is a functor 
\[         \utilde{\;\;\;} \,\maps \Span \to \Vect   \]
where $\Vect$ is the category of real vector spaces and linear
operators, and $\Span$ is a category with 
\begin{itemize}
\item
groupoids as objects,
\item
isomorphism classes of finite type spans as morphisms,
\end{itemize}
where composition comes from the method of composing spans we have
just described. So, groupoidification is not merely a way of replacing
linear algebraic structures involving the real numbers with purely
combinatorial structures.  It is also a form of `categorification'
\cite{BaezDolan:1998}, where we take structures defined in the
category $\Vect$ and find analogues that live in the category
$\Span$.

One might notice that our morphisms are defined as isomorphism classes of spans. A deeper approach is to think of
$\Span$ as a bicategory with:
\begin{itemize}
\item
groupoids as objects,
\item
finite type spans as morphisms,
\item
isomorphism classes of maps of spans as $2$-morphisms.
\end{itemize}
Then degroupoidification becomes a functor between bicategories:
\[         \utilde{\;\;} \, \maps \Span \to \Vect   \]
where $\Vect$ is viewed as a bicategory with only identity
$2$-morphisms. Since, again, 2-morphisms are define as isomorphism classes, we could go further and think of $\Span$ as tricategory. This approach is currently being investigated by Hoffnung \cite{Hoffnung:tricat}, but is unnecessary for the example in this project. In the next chapter we will construct study spans of groupoids over a fixed abelian category.  We show that under certain conditions this gives a bicategory which can be made into a braided monoidal bicategory in an interesting way.  This construction groupoidifies the braided monoidal category described in Section \ref{gvs}.

%Date last editted: April 21, 2011
\chapter{The Braided Monoidal Bicategory}\label{GHA}
The main focus of this chapter will be combining the ideas of the previous two chapter to describe a categorification of the Hall algebra. In Theorem \ref{BMC} we showed that if $K$ is the Grothendieck group of a suitable abelian category $\mathcal{A}$, the monoidal category $\Vect^K$ of $K$-graded vector spaces can be given an interesting braiding. Also, the Hall algebra was a Hopf object in this braided monoidal category. So, in this chapter, we describe a braided monoidal bicategory which groupoidifies $\Vect^K$. Starting with the objects, we first have to consider what it would mean for a groupoid (our stand-in for a vector space) to be `graded' over something related to the Grothendieck group $K$. There are two main features of a grading to consider here; what is the grading group, and how is a specific vector related to element of that group. If we want everything to be related to groupoids, we should remember that the Grothendieck group $K$ is generated by equivalence classes of objects in our category $\mathcal{A}=\Rep(Q)$. So we might take the underlying groupoid, $\mathcal{A}_0$, as our replacement for the Grothendieck group. This allows us to describe the `grading' for a specific groupoid as a functor from that groupoid to $\mathcal{A}_0$. 

The chapter will be organized as follows. In Section \ref{bicat}, we will show that there is a bicategory with groupoids over $\mathcal{A}_0$ as objects, certain spans between these as 1-morphisms, and certain maps between spans as 2-morphisms. In Section \ref{monbicat} we set out to find a tensor product that will make this a monoidal bicategory. We start by noting that the category of groupoids has a tensor product given by cartesian product of categories. Also, the abelian category $\Rep(Q)$ has a tensor product given by choosing a specific direct sum for a given pair of objects. By combining these, we can obtain a tensor product for the  bicategory of groupoids over $\mathcal{A}_0$. Now since each of the separate tensor products form a monoidal category, then we will see that coherence laws for the monoidal bicategory structure on groupoids over $\mathcal{A}_0$ will be satisfied trivially; i.e. the pentagon equation holds on the nose.

In Section \ref{EXT} we prepare to define the braided monoidal structure on this bicategory by constructing a groupoid $\EXT(M,N)$ for any pair of objects $M$ and $N$ in an abelian category. This construction is interesting in its own right because it combines the familiar $\Hom$ and $\Ext^1$ functors into a single construction. The reason for this can be seen in the braiding isomorphisms from the Hall algebra construction in Chapter \ref{HA}. One will notice that when braiding two elements in the Hall algebra, the isomorphism adds a coefficient that is dependent on a bilinear form their grades. This bilinear form, called the Euler form, is the difference of the dimensions of $\Hom$ and $\Ext^1$. The value of this form serves as the exponent of $q$ (the dimension of the underlying field for our quiver representations), so `subtraction' becomes `division', and the resulting coefficient can be thought of as the size of $\Ext^1$ divided by the size of $\Hom$. Moving up to groupoids, the cardinality of a groupoid involves division by the size of the automorphism group of each object. A little bit of homological algebra will show that an automorphism of any extension of two representations is equivalent to a homomorphism between the same representations. Thus, the groupoid of extensions of two fixed representations will have the correct groupoid cardinality.

In Section \ref{braidmonbicat} we finish our braided monoidal bicategory by using the groupoids $\EXT$ to construct the braiding span. This leads us to some interesting algebra when verifying the coherence laws for the hexagonator natural isomorphisms. Within these calculations we discover meaning for the laws as facts about splitting and combining short exact sequences of representations. These algebraic facts contain yet another layer of isomorphisms, which hint at the fact that there actually is a tricategory structure lurking around. This however can wait until another time.

\section{Bicategory Structure}\label{bicat}
In this section we will construct the bicategory of groupoids over $\mathcal{A}_0$ and spans. We will be using the definition of bicategory \ref{bicatDEF} from the appendix.
\begin{definition}
Given an abelian category $\mathcal{A}$ and its underlying groupoid $\mathcal{A}_0$, we can define a bicategory $\Span(\Gpd\downarrow\mathcal{A}_0)$ where:
\begin{itemize}
\item An object is a groupoid $X$ equipped with a functor to $\mathcal{A}_0$. We will denote the image in $\mathcal{A}_0$ of an element $x\in X$ by $\u{x}$. 
\item A $1$-morphism is a span of groupoids over $\mathcal{A}_0$ equipped with a natural isomorphism $\alpha$:
\[\xymatrix{
          & S\ar[dl]\ar[dr]& \\
 Y\ar[dr] &  \stackrel{\alpha}{\Longleftarrow}              & X\ar[dl] \\
          & \mathcal{A}_0  & 
}
\]
\item A $2$-morphisms is an equivalence class of maps between spans. A map between spans is a functor $f\maps S\to S^\prime$ such that the following two triangles commute up to a natural isomorphism:
\[\xymatrix{
 S \ar[dr]\ar[rr]^f&   & S^\prime\ar[dl]& & S\ar[dr]\ar[rr]^f &   & S^\prime\ar[dl]\\
                   & X &                & &                   & Y &   \\
}\]
Two such maps between spans are equivalent if they are naturally isomorphic as functors.
\item Composition of morphisms is given by weak pullback of spans:
\[\xymatrix{
         &                                  &TS\ar[dl]\ar[dr]  &                                                 &          \\
         &T\ar[dl]\ar[dr]                   &\stackrel{\gamma}{\Longleftarrow} & S\ar[dl]\ar[dr]                                 &          \\
Z \ar[drr]&\stackrel{\beta}{\Longleftarrow} &  Y\ar[d] &  \stackrel{\alpha}{\Longleftarrow}              & X\ar[dll] \\
         &                                  &\mathcal{A}_0     & & \\
}
\]
where $TS$ can be describe explicitly as follows. Let $s_Y$ represent the image of $s$ in $Y$. $TS$ is the groupoid with objects $[(t,s,f)\mid s_Y\stackrel{f}{\rightarrow} t_Y]$ and obvious morphisms. 
\item Associator: for a quadruple of objects $X, Y, Z, W$ and spans $R, S, T$ the associator $a$ gives the $2$-isomorphism:
\[\xymatrix{
           &                &         &(TS)R\ar[dr]\ar[dl] &                 &                & \\
           &                &TS\ar[dl]\ar[dr] &\stackrel{\epsilon^\prime}{\Longleftarrow}     &R\ar[dr] &                & \\
           &T\ar[dr]\ar[dl] &\stackrel{\delta^\prime}{\Longleftarrow}         &S\ar[dr]\ar[dl]       & &R\ar[dr]\ar[dl] & \\
Z\ar[drrr] & \stackrel{\beta}{\Longleftarrow} &Y\ar[dr] & \stackrel{\alpha}{\Longleftarrow} &X\ar[dl]         &\stackrel{\gamma}{\Longleftarrow} &W\ar[dlll] \\
           &                &         &\mathcal{A}_0         &                 &                & \\
           &                &         & \Downarrow a_{\gamma, \alpha, \beta}    & & & \\
           &                &         &T(SR)\ar[dr]\ar[dl] &                 &                & \\
           &                &T\ar[dl] &\stackrel{\epsilon}{\Longleftarrow}     &SR\ar[dr]\ar[dl] &                & \\
           &T\ar[dr]\ar[dl] &         &S\ar[dr]\ar[dl]       &\stackrel{\delta}{\Longleftarrow} &R\ar[dr]\ar[dl] & \\
Z\ar[drrr] & \stackrel{\beta}{\Longleftarrow} &Y\ar[dr] & \stackrel{\alpha}{\Longleftarrow} &X\ar[dl]         &\stackrel{\gamma}{\Longleftarrow} &W\ar[dlll] \\
           &                &         &\mathcal{A}_0         &                 &                & \\
}\]
which does the following. First, we can describe $(TS)R$ as the groupoid with objects $[(t,s,f),r,g)\mid r_X\stackrel{g}{\rightarrow}s_X, s_Y\stackrel{f}{\rightarrow}t_Y]$, and $T(SR)$ as the groupoid with objects $[(t,(s,r,g),f)\mid r_X\stackrel{g}{\rightarrow}s_X, s_Y\stackrel{f}{\rightarrow}t_Y]$. Then, $a_{\gamma, \alpha, \beta}:(TS)R\rightarrow T(SR)$ is the functor which simply rearranges and re-parenthesizes the quintuple.

\item Left and right unitor: For a pair of objects $A,B$ the left and right unitors $l$ and $r$ are given by $2$-isomorphisms:
\[\xymatrix{
         &                                  & YS\ar[dl]\ar[dr]                 &                                   & & & & &  \\
         &Y\ar[dl]\ar[dr]                   &\stackrel{\gamma}{\Longleftarrow} & S\ar[dl]\ar[dr] & &\stackrel{l_{\alpha}}{\Longrightarrow} &   & S\ar[dl]\ar[dr] &             \\
Y \ar[drr]&\stackrel{1}{\Longleftarrow}  &  Y\ar[d]                 &\stackrel{\alpha}{\Longleftarrow}& X\ar[dll]   & & Y\ar[dr]    &           \stackrel{\alpha}{\Longleftarrow}       & X\ar[dl] \\
         &              &\mathcal{A}_0 &              & & & &\mathcal{A}_0 &          \\
         }
\]
and
\[\xymatrix{
         &                                  & SX\ar[dl]\ar[dr]                 &                                   & & & & &  \\
         &S\ar[dl]\ar[dr]                   &\stackrel{\gamma}{\Longleftarrow} & X\ar[dl]\ar[dr] & &\stackrel{r_{\alpha}}{\Longrightarrow} &   & S\ar[dl]\ar[dr] &             \\
Y \ar[drr]&\stackrel{\alpha}{\Longleftarrow}  &  X\ar[d]                 &\stackrel{1}{\Longleftarrow}& X\ar[dll]   & & Y\ar[dr]    &           \stackrel{\alpha}{\Longleftarrow}       & X\ar[dl] \\
         &              &\mathcal{A}_0 &              & & & &\mathcal{A}_0 &          \\
         }
         \]
which can be described as follows. First, we describe $YS$ as the groupoid with objects $[(y,s,f)\mid y\stackrel{f}{\rightarrow} s_Y]$. Then $l_{\alpha}:YS\rightarrow S$ is the functor which simply maps $(y,s,f)$ to $s$. Similarly, $r_{\alpha}:SX\to S$ is the functor which map $(x,s,g)$ to $s$.
\end{itemize}
This data must satisfy the following identities:
\begin{itemize}
\item The pentagon identity for the associator:
Each vertex of the pentagon diagram is comprised of the composition of four spans $T, S, R, Q$ in different ways. 
\[\xymatrix{
                 &      &((TS)R)Q\ar[dll]\ar[ddr]       &        \\
(T(SR))Q\ar[dd]  &      &                               &       \\  
                 &      &                               & (TS)(RQ)\ar[ddl] \\
T((SR)Q)\ar[drr] &      &                               & \\
                 &      & T(S(RQ))  & \\
}\]
We will show the process of describing one of the vertices in detail. For example, the vertex $T(S(RQ))$ is given by the following sequence of pullbacks:

\[\xymatrix{
           &                &         &                &T(S(RQ))\ar[dl]\ar[dr] & & & & \\
           &                &         &T\ar[dl] &\stackrel{}{\Longleftarrow}                 &S(RQ)\ar[dl]\ar[dr]       & & & \\
           &                &T\ar[dl] &   &S\ar[dl] &\stackrel{}{\Longleftarrow}                &RQ\ar[dl]\ar[dr] & & \\
           &T\ar[dl]\ar[dr] &         &S\ar[dl]\ar[dr]       & &R\ar[dl]\ar[dr] &\stackrel{}{\Longleftarrow} &Q\ar[dl]\ar[dr] & \\
Z\ar[drrrr] & \stackrel{\beta}{\Longleftarrow} &Y\ar[drr] & \stackrel{\alpha}{\Longleftarrow} &X\ar[d]         &\stackrel{\gamma}{\Longleftarrow} &W\ar[dll] &\stackrel{}{\Longleftarrow} & V\ar[dllll]\\
           &                &         &                    &\mathcal{A}_0    &                & & & \\
}
\]
As a groupoid, we can describe the final result $T(S(RQ))$ in steps. First $T(S(RQ))=[(t,a,f)\mid a_Y\stackrel{f}{\rightarrow} t_Y]$ where $a\in S(RQ)$. we then describe $S(RQ)=[(s,b,g)\mid b_X\stackrel{g}{\rightarrow} s_X]$ where $b\in RQ$. Finally, $RQ=[(r,q,h)\mid q_W\stackrel{h}{\rightarrow} r_W]$. We can then work backwards to see that $T(S(RQ))$ can be rewritten as the groupoid $[(t,(s,(r,q,h),g),h)\mid q_W\stackrel{h}{\rightarrow}r_W, r_X\stackrel{g}{\rightarrow}s_X, s_Y\stackrel{f}{\rightarrow} t_Y]$.

We can also produce similar descriptions of the remaining vertices in the pentagon to get:

\[\xymatrix{
                 &      &[(((t,s,f),r,g),q,h)]\ar[dll]\ar[ddr]       &        \\
[((t,(s,r,g),f),q,h)]\ar[dd]  &      &                               &       \\  
                 &      &                               & [((t,s,f),(r,q,h),g)]\ar[ddl] \\
[(t,((s,r,g),q,h),f)]\ar[drr] &      &                               & \\
                 &      & [(t,(s,(r,q,h),g),h)]  & \\
}\]
which clearly commutes by simply rearranging and reparethesizing the tuples in each groupoid.

\item The Unitor Identity:

\[\xymatrix{
(TY)S\ar[rr]^a\ar[dr]_{r_\beta\cdot S} &    & T(YS)\ar[dl]^{T\cdot l_\alpha} \\
                                    & TS & \\
}\]
By a calculation similar the what we did for the associator pentagon identity, we can simplify each groupoid to the following:
\[\xymatrix{
[((t,y,g),s,f)]\ar[rr]^a\ar[dr]_{r_\beta\cdot S} &    & [(t,(y,s,h),f)]\ar[dl]^{T\cdot l_\alpha} \\
                                    & [(t,s,f)] & \\
}\]
which clearly commutes.
\end{itemize}
\end{definition}

\section{The Monoidal Structure}\label{monbicat}
The bicategory $\Span(\Gpd\downarrow\mathcal{A}_0)$ carries a monoidal structure. Given two groupoids over $\mathcal{A}_0$:
\[\xymatrix{
X\ar[d]^f & & Y\ar[d]^g\\
\mathcal{A}_0 & & \mathcal{A}_0\\
}\]
we can construct the tensor product of these objects as the composite of $f\times g$ with direct sum of objects in $\mathcal{A}_0$, i.e.:
\[\xymatrix{
X \times Y\ar[d]^{f\times g}\\
\mathcal{A}_0\times \mathcal{A}_0\ar[d]^\oplus\\
\mathcal{A}_0\\
}\]
which makes $X\times Y$ a groupoid over $\mathcal{A}_0$.
Ww will now show that we have a monoidal bicategory as defined in the appendix \ref{monoidalDEF}. 
\begin{theorem}
The bicategory $\Span(\Gpd\downarrow\mathcal{A}_0)$ is a monoidal bicategory as follows:
\begin{itemize}
\item The tensor product $\otimes$ given by:
\[\xymatrix{
X\ar[dd]^f     &     & Y\ar[dd]^g     &   &X\times Y\ar[dd]^{\oplus\circ(f\times g)}\\
              &\ten &               &:= & \\
\mathcal{A}_0 &     & \mathcal{A}_0 &   &\mathcal{A}_0\\
}\]
\item The monoidal unit object $I$ given by the terminal groupoid $1$ equipped with the functor to the terminal object in $\mathcal{A}$.
\item The associator pseudonatural isomorphism $a$ given by the span:
\[\xymatrix{
 &(XY)Z\ar[dr]^{\id}\ar[dl]_{\hat{a}} & \\
X(YZ)\ar[dr] & \stackrel{\Leftarrow}{\alpha} & (XY)Z\ar[dl] \\
             & \mathcal{A}_0 & \\
}\]
where $ \hat{a}\maps (XY)Z \to X(YZ)$ is the associator for the monoidal category $\Gpd$, and $\alpha$ is given by the associator for the direct sum in $\mathcal{A}$.
\item The unitor pseudonatural isomorphism $l$ and $r$ are given by the spans:
\[\xymatrix{
 & IX\ar[dl]_{\hat{l}}\ar[dr]& & &XI\ar[dl]_{\hat{r}}\ar[dr]&  \\
X\ar[dr] &\stackrel{\Leftarrow}{\beta} & XI\ar[dl]& X\ar[dr] & \stackrel{\Leftarrow}{\gamma} & XI\ar[dl] \\
& \mathcal{A}_0 & & &\mathcal{A}_0 & \\
}\]
where $\hat{l}$ and $\hat{r}$ are the unitors for the tensor product in $\Gpd$ and $\beta, \gamma$ are given by the unitors for the direct sum in $\mathcal{A}$.
\item the pentagonator $\pi$ is trivial; i.e. the pentagon equation holds on the nose. This is true since the tensor product in both $\Gpd$ and $\mathcal{A}$ are strictly associative (i.e. their respective pentagon equations hold on the nose).
\end{itemize}
\end{theorem}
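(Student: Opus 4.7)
The plan is to verify the data and axioms of a monoidal bicategory (Definition \ref{monoidalDEF}) piece by piece, reducing the coherence conditions to the strict pentagon and triangle equations satisfied by cartesian product on $\Gpd$ and by direct sum on $\mathcal{A}$.

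First, I would check that $\ten$ extends to a pseudofunctor $\ten \maps \Span(\Gpd\downarrow\mathcal{A}_0)^2 \to \Span(\Gpd\downarrow\mathcal{A}_0)$. On objects this is given; on a pair of $1$-morphisms $(S,S')$, one takes $S\times S'$ with the evident legs and structure map built from the leg functors to $\mathcal{A}_0$ composed with $\oplus$, the required invertible $2$-cell making the diagram commute over $\mathcal{A}_0$ being the image under $\oplus$ of the two $2$-cells $\alpha,\alpha'$ coming from $S$ and $S'$. On $2$-cells of spans one takes cartesian products of the underlying functors. The nontrivial piece is the compositor: for composable pairs $(S,S')$ and $(T,T')$ one needs a canonical $2$-isomorphism $(T\ten T')\circ(S\ten S') \cong (T\circ S)\ten(T'\circ S')$, and this exists because cartesian product on $\Gpd$ preserves weak pullbacks up to canonical equivalence.

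Second, I would verify that the listed associator and unitor spans are genuine $1$-morphisms in the bicategory and in fact pseudonatural equivalences. The invertible $2$-cells filling the span diagrams are supplied by the associator and left/right unitors for $\oplus$ in $\mathcal{A}$. Pseudonaturality in each variable amounts to producing, for every $1$-morphism of the bicategory, an invertible $2$-cell witnessing the naturality square, with each such $2$-cell inherited from naturality of $\hat{a},\hat{l},\hat{r}$ on $\Gpd$ together with naturality of the associator and unitors for $\oplus$ on $\mathcal{A}$.

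Third, I would argue that the pentagonator and the unit modifications can be taken to be identities. Each of the five vertices in the pentagon yields the same underlying cartesian product $W\times X\times Y\times Z$ equipped with the same composite structure map built from $\oplus$; the apparent discrepancies are only parenthesization. Since both $\times$ on $\Gpd$ and $\oplus$ on $\mathcal{A}$ satisfy the pentagon equation on the nose, the two re-association composites agree on the nose as functors over $\mathcal{A}_0$, so the pentagonator modification is the identity. The same mechanism handles the triangle identity for the unitors and the swallowtail coherence.

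The hardest step will be the interaction between $\ten$ and weak pullback: the compositor for $\ten$ asks that the cartesian product of two weak pullbacks be canonically equivalent to the weak pullback of the corresponding cartesian products, and the verification of each coherence law requires chasing this equivalence through diagrams built from associators, unitors, and composites of spans. Each instance is produced by the bi-universal property of weak pullbacks together with the fact that cartesian product is a bi-product on $\Gpd$, but organizing them into a fully coherent pseudofunctor structure satisfying the monoidal bicategory axioms is the main bookkeeping obstacle.
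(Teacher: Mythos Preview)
Your proposal is correct and follows the same essential idea as the paper: reduce the monoidal coherence to the strict pentagon and triangle equations for $\times$ on $\Gpd$ and $\oplus$ on $\mathcal{A}$, so that the pentagonator and unit modifications can be taken to be identities. In fact, the paper gives no proof beyond the remark already embedded in the final bullet of the theorem statement; your outline is considerably more careful, and in particular your identification of the compositor $(T\ten T')\circ(S\ten S') \cong (T\circ S)\ten(T'\circ S')$ as the point requiring the most attention is a genuine observation that the paper passes over entirely.
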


\section{The Groupoid EXT}\label{EXT}
In this section we will study a groupoid whose cardinality is related to the coefficient of the braiding isomorphisms in $\Vect^K$. This groupoid will become the building block of the `braiding spans' in our monoidal bicategory $\Span(\Gpd\downarrow\mathcal{A}_0)$. This will allow us to describe a braiding structure on this monoidal bicategory, which is a categorification of the braiding on the monoidal category $\Vect^K$.\\

\begin{definition}
For a fixed pair of objects $M,N$ in the abelian category $\Rep(Q)$, define the groupoid $\EXT(M,N)$ to have:
\begin{itemize}
\item Objects - short exact sequences of the form: 
\[0\to N \to E \to M \to 0\]
\item Morphisms - a triple of isomorphisms $(\alpha, \beta, \gamma)$ such that the following diagram commutes:
%\[\xymatrix{
%& & E \ar[dr]^g\ar[dd]^\alpha& & \\
%0\ar[r]& N\ar[ur]^f\ar[dr]_{f^\prime} & & M\ar[r]& 0 \\
%& & E^\prime\ar[ur]_{g^\prime} && \\
%}\] 
\[
\xymatrix{
0 \ar[r] & 
N \ar[r]^{f} \ar[d]^{\alpha} & 
E \ar[r]^{g} \ar[d]^{\beta} & 
M \ar[r] \ar[d]^{\gamma} & 0 \\
0 \ar[r] & N \ar[r]^{f} & E^\prime \ar[r]^{g} & M \ar[r] & 0 \\
}
\]
\end{itemize}
\end{definition}

We start with a useful formula for $P^E_{MN}$ (the cardinality of the set $\mathcal{P}^E_{MN}$).
\begin{proposition}\label{Riedtmann}(Riedmann's Formula) For fixed $M$, $N$, and $E$;
\[P^E_{MN}=\frac{|\Ext^1(M,N)_E||\Aut(E)|}{|\Hom(M,N)|}\]
where $\Ext^1(M,N)_E$ is the set of all classes of extensions of $M$ by $N$ which are isomorphic to $E$.
\end{proposition}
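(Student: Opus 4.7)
The plan is to identify $\mathcal{P}^E_{MN}$ as a disjoint union of orbits of an action of $\Aut(E)$, indexed by the set $\Ext^1(M,N)_E$ of extension classes realized by $E$, and then apply orbit–stabilizer to each orbit. This reduces the formula to a single computation: identifying the stabilizer of a given pair $(f_0,g_0) \in \mathcal{P}^E_{MN}$ with the group $\Hom(M,N)$.

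First I would define an action of $\Aut(E)$ on $\mathcal{P}^E_{MN}$ by
\[
\beta \cdot (f,g) \;=\; (\beta f,\; g\beta^{-1}),
\]
and observe that this is exactly the notion of equivalence of extensions: two pairs $(f,g)$ and $(f',g')$ determine the same class in $\Ext^1(M,N)$ if and only if they lie in the same $\Aut(E)$-orbit. Since every pair in $\mathcal{P}^E_{MN}$ has middle term $E$, the set of orbits is therefore in bijection with $\Ext^1(M,N)_E$. Consequently
\[
P^E_{MN} \;=\; \sum_{[\xi]\in \Ext^1(M,N)_E} |\text{orbit of } (f_0,g_0)|,
\]
where $(f_0,g_0)$ is any representative of the class $[\xi]$.

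Next I would compute the stabilizer. An element $\beta \in \Aut(E)$ stabilizes $(f_0,g_0)$ precisely when $\beta f_0 = f_0$ and $g_0\beta = g_0$, i.e.\ $(\beta - \id_E)f_0 = 0$ and $g_0(\beta - \id_E) = 0$. The first condition says $\beta - \id_E$ factors through $E/f_0(N) \cong M$ via $g_0$, so $\beta - \id_E = h\circ g_0$ for a unique $h \maps M \to E$; the second then forces $g_0 h = 0$, hence $h$ factors through $\ker g_0 = f_0(N)$, giving $h = f_0 \circ k$ for a unique $k \in \Hom(M,N)$. The map
\[
k \;\longmapsto\; \id_E + f_0\, k\, g_0
\]
is therefore a bijection from $\Hom(M,N)$ to the stabilizer, and it is straightforward to check that its image really does lie in $\Aut(E)$ (the inverse is $\id_E - f_0 k g_0$ since $g_0 f_0 = 0$). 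Hence every orbit has size $|\Aut(E)|/|\Hom(M,N)|$, and summing over the $|\Ext^1(M,N)_E|$ orbits yields the stated formula.

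The main obstacle is the stabilizer identification, which is a small but delicate piece of homological bookkeeping; the two factorization steps must be executed in the correct order, and one must verify that $\id_E + f_0 k g_0$ is genuinely invertible and that distinct $k$'s give distinct automorphisms. Everything else is an application of orbit–stabilizer, but it is worth remarking that this argument works verbatim in any abelian category with finite $\Hom$ and $\Ext^1$ groups, which is why the formula is useful in the Hall algebra setting of Section~\ref{hall}.
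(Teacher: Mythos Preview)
Your proposal is correct and follows essentially the same approach as the paper. The core computation---identifying the stabilizer of $(f_0,g_0)$ under the $\Aut(E)$-action with $\Hom(M,N)$ via $k \mapsto \id_E + f_0 k g_0$---is exactly what the paper does; the only cosmetic differences are that you argue the two factorizations directly from kernel/image considerations while the paper phrases them via the long exact sequences for $\Hom(-,E)$ and $\Hom(M,-)$, and that you make the orbit--stabilizer framework and the identification of orbits with $\Ext^1(M,N)_E$ explicit, whereas the paper presents only the stabilizer calculation and leaves the rest implicit.
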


\begin{proof}
This proof comes from \cite{Hubery}, but we will rewrite it here because it illustrates some important techniques for counting extensions.
Let $\alpha\in \Aut(0\to N \stackrel{f}{\rightarrow} E \stackrel{g}{\rightarrow} M \to 0)$. By the definition of morphism in the groupoid $\EXT(M,N)$, we get that this automorphism is equivalent to an automorphism of $E$ such that the following diagram commutes:
\[\xymatrix{
& & E \ar[dr]^g\ar[dd]^\alpha& & \\
0\ar[r]& N\ar[ur]^f\ar[dr]_{f} & & M\ar[r]& 0 \\
& & E\ar[ur]_{g} && \\
}\] 
First we note that since $\alpha f=f$, we have that $(\alpha -1)f=0$. From the long exact sequence for $\Hom(-,E)$, i.e.;
\[0\to\Hom(M,E)\stackrel{\tilde{g}}{\rightarrow}\Hom(E,E) \stackrel{\tilde{f}}{\rightarrow}\Hom(N,E)\to\cdots\]
we get that $(\alpha -1)\in\ker(\tilde{g})=\im(\tilde{f})$, so there is a unique $\phi\in\Hom(M,E)$ such that $\phi g = \alpha-1.$  Also, from the original diagram, we get that $g\alpha=g$, so $g(\alpha -1 )=0$. By substitution, this gives that $0=g\phi g$, and since $g$ is onto $g\phi=0$. We now consider another long exact sequence, namely the one for $\Hom(M,-)$:
\[0\to\Hom(M,N)\stackrel{\hat{f}}{\rightarrow}\Hom(M,E) \stackrel{\hat{g}}{\rightarrow}\Hom(M,M)\to\cdots\]
By a similar argument to above, the fact that $g\phi=0$ tells us there exist a unique $\delta\in\Hom(M,N)$ such that $\phi=f\delta$. Thus we have an injective map $\Aut(0\to N \stackrel{f}{\rightarrow} E \stackrel{g}{\rightarrow} M \to 0)\longrightarrow \Hom(M,N)$ which take $\alpha\in\Aut(0\to N \stackrel{f}{\rightarrow} E \stackrel{g}{\rightarrow} M \to 0)$ to the unique morphism $\delta\in\Hom(M,N)$ such that $\alpha=1+f\delta g$.\\
To show this map is surjective, we note that for any $\delta\in\Hom(M,N)$, the map $f\delta g\in \Hom(E,E)$ satisfies $(f\delta g)^2=0$, so that $1+f\delta g\in \Aut(E)$. Therefore $\Aut(0\to N \stackrel{f}{\rightarrow} E \stackrel{g}{\rightarrow} M \to 0)\iso \Hom(M,N)$.
\end{proof}
We now make use of this formula to see that the cardinality of $\EXT(M,N)$ is related to our braiding coefficient $q^{-\langle\underline{m},\underline{n}\rangle}$.
\begin{lemma}
\[|\EXT(M,N)|=\frac{q^{-\langle\underline{m},\underline{n}\rangle}}{|\Aut(N)||\Aut(M)|}\].
\end{lemma}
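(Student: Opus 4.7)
The plan is to apply the alternate formula for groupoid cardinality from Lemma~\ref{ALTCARD}, and then invoke Riedtmann's Formula (Proposition~\ref{Riedtmann}) to rewrite the resulting expression in terms of $\Ext^1$ and $\Hom$. Since $\EXT(M,N)$ is not essentially small when the middle term $E$ is allowed to range over every object of $\Rep(Q)$, I first pass to the equivalent skeletal subgroupoid in which the middle term runs over a single representative of each isomorphism class in the underlying groupoid of $\Rep(Q)$; this does not change the cardinality by Lemma~\ref{EQUIVGRPD}.

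In the skeletal version, the set of objects decomposes as a disjoint union over iso classes $[E]$ of pairs $\xi=(f,g)$ making $0\to N\stackrel{f}{\to} E\stackrel{g}{\to} M\to 0$ exact, and for each $[E]$ there are exactly $P^E_{MN}$ such pairs by definition. The key counting step is the following: for a fixed $\xi$, I need $|\Mor(\xi,-)|$. A morphism out of $\xi$ is a triple $(\alpha,\beta,\gamma)\in \Aut(N)\times\Aut(E)\times\Aut(M)$ (with $\beta$ an automorphism of $E$ because we are in the skeleton); crucially, \emph{given} $\xi$ and any such triple, the target $\xi'$ is uniquely determined by $f'=\beta f\alpha^{-1}$ and $g'=\gamma g\beta^{-1}$, and every morphism arises this way. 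Hence $|\Mor(\xi,-)|=|\Aut(N)|\,|\Aut(E)|\,|\Aut(M)|$, so Lemma~\ref{ALTCARD} gives
\[|\EXT(M,N)| \;=\; \sum_{[E]}\frac{P^E_{MN}}{|\Aut(N)|\,|\Aut(E)|\,|\Aut(M)|}.\]

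Substituting Riedtmann's formula $P^E_{MN}=|\Ext^1(M,N)_E|\,|\Aut(E)|/|\Hom(M,N)|$ cancels the awkward $|\Aut(E)|$, and observing that $\sum_{[E]}|\Ext^1(M,N)_E|=|\Ext^1(M,N)|$ (extensions partition by iso class of middle term) collapses the sum to
\[\frac{|\Ext^1(M,N)|}{|\Aut(M)|\,|\Aut(N)|\,|\Hom(M,N)|}.\]
Finally, since $\Hom(M,N)$ and $\Ext^1(M,N)$ are $\F_q$-vector spaces and $\Rep(Q)$ is hereditary, the ratio equals $q^{\dim\Ext^1(M,N)-\dim\Hom(M,N)}=q^{-\langle\u m,\u n\rangle}$ by the definition of the Euler form in Section~\ref{gvs}, yielding the claim. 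The only real subtlety is the morphism count: one must notice that the two commuting-square conditions \emph{determine} $\xi'$ from $(\alpha,\beta,\gamma)$ rather than constrain the triple, so the three automorphism groups contribute freely and everything else is bookkeeping plus Riedtmann.
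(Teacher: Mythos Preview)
Your proof is correct and follows essentially the same route as the paper's. The only cosmetic difference is that the paper identifies (the skeleton of) $\EXT(M,N)$ with the weak quotient $\coprod_{[E]} \mathcal{P}^E_{MN}\,/\!/\,(\Aut(N)\times\Aut(E)\times\Aut(M))$ and reads off the cardinality from $|S/\!/G|=|S|/|G|$, whereas you reach the identical sum $\sum_{[E]} P^E_{MN}/(|\Aut(N)||\Aut(E)||\Aut(M)|)$ by invoking Lemma~\ref{ALTCARD} and counting $|\Mor(\xi,-)|$ directly; from that point on (Riedtmann, partitioning $\Ext^1$ by middle term, Euler form) the two arguments are word-for-word the same.
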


\begin{proof}
First, we note that $\EXT(M,N)$ as described is equivalent to the weak quotient groupoid $\coprod_E (\mathcal{P}^E_{MN})//(\Aut(N)\times\Aut(E)\times\Aut(E))$, and so its groupoid cardinality is simply 
\[\sum_E \frac{P_{MN}^E}{|\Aut(N)||\Aut(E)||\Aut(M)|}\] 
but by Riedtmann's Formula \ref{Riedtmann}. we have that for each fixed $E$: 
\[P^E_{MN}=\frac{|\Ext^1(M,N)_E||\Aut(E)|}{|\Hom(M,N)|}.\]
Also note that the sum of $|\Ext^1(M,N)_E|$ over all values of $E$ yields the the entire set of extensions $|\Ext^1(M,N)|$.
Thus by substitution, the groupoid cardinality of the groupoid of all short exact sequences $0\to N\to E\to M\to 0$ with fixed $M$ and $N$ is precisely:
\[\begin{array}{rl}
\displaystyle{\frac{|\Ext^1(M,N)|}{|\Hom(M,N)|}\frac{1}{|\Aut(N)||\Aut(M)|}} & \displaystyle{= \frac{q^{\dim\Ext^1(M,N)}}{q^{\dim\Hom(M,N)}}\frac{1}{|\Aut(N)||\Aut(M)|}} \\
 & \\
                                 & \displaystyle{= \frac{q^{\dim\Ext^1(M,N)-\dim\Hom(M,N)}}{|\Aut(N)||\Aut(M)|}}\\
 & \\
                                 & \displaystyle{= \frac{q^{-\langle\underline{m},\underline{n}\rangle}}{|\Aut(N)||\Aut(M)|}} \\
\end{array}\]
\end{proof}

%\begin{proposition} For the groupoid listed above, if we fix a short exact sequence $0\to N \stackrel{f}{\rightarrow} E \stackrel{g}{\rightarrow} M \to 0$;
%\[\Aut(0\to N \stackrel{f}{\rightarrow} E \stackrel{g}{\rightarrow} M \to 0)\iso \Hom(M,N)\]
%\end{proposition}
%With this proposition, we can show that this is the desired groupoid to categorify the braiding coefficient.
%\begin{proposition}
%\[|\EXT(M,N)|=q^{-\langle[M],[N]\rangle}\]
%\end{proposition}
%\begin{proof}
%\[\begin{array}{rl}
%|\EXT(M,N)| & = \displaystyle{\sum_{[E]}\frac{1}{\#\Aut(0\to N \to E \to M \to 0)}}\\
%            & \\ 
%            & = \displaystyle{\sum_{[E]}\frac{1}{\# \Hom(M,N)} }\\
%            & \\
%            & = \displaystyle{\frac{\# \Ext^1(M,N)}{\# \Hom(M,N)}}\\
%            & \\
%            & = \displaystyle{\frac{q^{\dim\Ext^1(M,N)}}{q^{\dim\Hom(M,N)}}}\\
%            & \\
%            & = \displaystyle{q^{-\langle[M],[N]\rangle}}\\
%\end{array}\]
%
%
%\end{proof}

We would also like to see that $\EXT(-,-)$ is bilinear for appropriate sums in $\Rep(Q)$ and $\SES(Q)$.
\begin{proposition}\label{EXTbil} The functor $\EXT(-,-)$ is bilinear, i.e.;
\begin{enumerate}
\item For any three representations $M_1,M_2,N\in \Rep(Q)$ we have:
\[\EXT(M_1\oplus M_2,N)\simeq \EXT(M_1,N)\times\EXT(M_2,N)\]
\item For any three representations $M, N_1, N_2\in \Rep(Q)$ we have:
\[\EXT(M,N_1\oplus N_2)\simeq \EXT(M,N_1)\times \EXT(M,N_2)\]
\end{enumerate}
\end{proposition}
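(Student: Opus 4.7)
The plan is to exhibit, for each assertion, quasi-inverse functors built from the pullback and pushout of short exact sequences along the universal maps of the direct sum. I will describe part (1) in detail; part (2) is entirely dual.

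First I would define
\[G\maps \EXT(M_1\oplus M_2,N)\to \EXT(M_1,N)\times\EXT(M_2,N)\]
by sending an extension $e\maps 0\to N\to E\to M_1\oplus M_2\to 0$ to the pair $(\iota_1^*e,\,\iota_2^*e)$ of pullback extensions along the canonical inclusions $\iota_i\maps M_i\to M_1\oplus M_2$, and sending an isomorphism triple $(\alpha,\beta,\gamma)$ to the two induced isomorphisms on the pullbacks via functoriality. For the quasi-inverse, define
\[F\maps \EXT(M_1,N)\times\EXT(M_2,N)\to \EXT(M_1\oplus M_2,N)\]
on objects by $F(e_1,e_2)=\nabla_*(e_1\oplus e_2)$: first form the direct-sum extension $0\to N\oplus N\to E_1\oplus E_2\to M_1\oplus M_2\to 0$, then push out along the codiagonal $\nabla\maps N\oplus N\to N$, $(n_1,n_2)\mapsto n_1+n_2$. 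Both assignments are functorial by the universal properties of pullback and pushout.

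The heart of the argument is to construct natural isomorphisms $F\circ G\simeq \mathrm{id}$ and $G\circ F\simeq \mathrm{id}$ by standard manipulations of extensions in an abelian category. Writing $\pi_i\maps M_1\oplus M_2\to M_i$ for the projections, the key identities are $\pi_j\iota_i=\delta_{ij}$, $\iota_1\pi_1+\iota_2\pi_2=\mathrm{id}_{M_1\oplus M_2}$, and the classical equivalence $\nabla_*(e_1\oplus e_2)\sim \pi_1^*e_1+\pi_2^*e_2$, where $+$ denotes the Baer sum. Combined with additivity of pullback in the contravariant argument, these yield
\[\iota_i^*\nabla_*(e_1\oplus e_2)\sim e_i \quad\text{and}\quad \nabla_*(\iota_1^*e\oplus \iota_2^*e)\sim (\iota_1\pi_1+\iota_2\pi_2)^*e=e,\]
which furnish the unit and counit of the equivalence. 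Part (2) is proved by dualizing throughout: push out along the projections $N_1\oplus N_2\to N_i$ and pull back along the diagonal $M\to M\oplus M$.

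The main obstacle will be organizing the action on morphisms, since an isomorphism in $\EXT$ involves not only the middle isomorphism but also automorphisms of $N$ and $M_1\oplus M_2$, and one must check that $F$ and $G$ extend to functors compatibly on all three components of each isomorphism triple. This bookkeeping is routine once one invokes the additivity of $\Hom$ and $\Ext^1$ in each argument, the same additivity that underpins the bilinearity of the Euler form used in Theorem~\ref{BMC}.
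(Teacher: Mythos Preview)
Your proposal is correct and takes essentially the same approach as the paper: the paper's functors $F_1$ and $G_1$ are precisely your pullback-along-$\iota_i$ and pushout-along-$\nabla$ (the latter described concretely as the quotient $(E_1\oplus E_2)/I_N$ with $I_N=\{(f_1(n),-f_2(n))\}$), and part (2) is handled dually in both. The only difference is packaging: you invoke the Baer-sum identity $\nabla_*(e_1\oplus e_2)\sim \pi_1^*e_1+\pi_2^*e_2$ to verify the quasi-inverse relations, whereas the paper writes down explicit morphisms of short exact sequences and checks commutativity by hand.
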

\begin{proof}
\begin{enumerate}
\item We start with an extension in $\EXT(M_1\oplus M_2,N)$, say 
\[0\to N\to E\to M_1\oplus M_2\to 0.\] 
We want to construct from this a pair of extensions in $\EXT(M_1,N)\times\EXT(M_2,N)$. So beginning with an extension:
\[\xymatrix{0 \ar[r] & N \ar[r]^f & E\ar[r]^{g\;\;\;\;\;\;} & M_1\oplus M_2\ar[r] & 0 \\}\]
we will construct an extension in $\EXT(M_1,N)$. we start by adding the canonical injection:
\[\xymatrix{
         &            &           & M_1\ar[d]^{i_1} & \\
0 \ar[r] & N \ar[r]^f & E\ar[r]^{g\;\;\;\;\;\;} & M_1\oplus M_2\ar[r] & 0 \\}
\]
and then forming the pullback of the right side of the diagram:
\[\xymatrix{
         &            & E_1\ar[d]^{\pi_e}\ar[r]^{g_1} & M_1\ar[d]^{i_1} & \\
0 \ar[r] & N \ar[ur]^f\ar[r]^f & E\ar[r]^{g\;\;\;\;\;\;}       & M_1\oplus M_2\ar[r] & 0 \\}
\]
We claim that the representation $E_1$ will form an extension of $M_1$ by $N$. First, we note that the map $N\stackrel{f}{\rightarrow} E$ can be extended to an injective map sending $n\mapsto (n,0)$, since any element in $\im f\in E$ maps to $0$ in $M_1\oplus M_2$. Thus we have a short exact sequence:
\[\xymatrix{
0 \ar[r] & N \ar[r]^f & E_1\ar[r]^{g_1\;\;} & M_1\ar[r] & 0 \\}\]
which gives an extension on $M_1$ by $N$. Similarly, we can construct and extension:
\[\xymatrix{
0 \ar[r] & N \ar[r]^f & E_2\ar[r]^{g_2\;\;} & M_2\ar[r] & 0 \\}\]
giving a functor:
\[F_1:\EXT(M_1\oplus M_2,N)\to \EXT(M_1,N)\times\EXT(M_2,N).\]

Now we go the other direction. Given a pair of extensions:
\[\xymatrix{
0 \ar[r] & N \ar[r]^{f_1} & E_1\ar[r]^{g_1\;\;} & M_1\ar[r] & 0 \\
0 \ar[r] & N \ar[r]^{f_2} & E_2\ar[r]^{g_2\;\;} & M_2\ar[r] & 0 \\}\]
We form the sequence:
\[\xymatrix{
0 \ar[r] & N \ar[r]^{(f_1,0)\;\;\;\;\;\;\;\;\;} & (E_1\oplus E_2)/I_N\ar[r]^{\;\;\;(g_1,g_2)} & M_1\oplus M_2\ar[r] & 0 \\}\]
where $I_N=\{(f_1(n),-f_2(n))| n\in N\}$. This sequence is short exact since the map $(f_1,0)=(0,f_2)$ map injectively to $\ker g_1$ and $\ker g_2$, respectively. This gives a functor:
\[G_1:\EXT(M_1,N)\times\EXT(M_2,N)\to \EXT(M_1\oplus M_2,N).\]

Now we need to show that these two functors form an equivalence of groupoids. 
Starting with a pair of extensions:
\[\xymatrix{
0 \ar[r] & N \ar[r]^{f_1} & E_1\ar[r]^{g_1\;\;} & M_1\ar[r] & 0 \\
0 \ar[r] & N \ar[r]^{f_2} & E_2\ar[r]^{g_2\;\;} & M_2\ar[r] & 0 \\}\]
we apply $G_1$ to get the extension:
\[\xymatrix{
0 \ar[r] & N \ar[r]^{(f_1,0)\;\;\;\;\;} & E_1\oplus E_2\ar[r]^{(g_1,g_2)\;\;\;\;\;} & M_1\oplus M_2\ar[r] & 0 .\\}\]
To apply $F_1$ we consider the two pullbacks:
\[\xymatrix{
         &            & E_1\ar[d]\ar[r]^{g_1} & M_1\ar[d]^{i_1} & \\
0 \ar[r] & N \ar[r]^{(f_1,0)\;\;}\ar[ru]^{f_1}\ar[rd]_{f_2} & E_1\oplus E_2\ar[r]^{(g_1,g_2)\;\;\;\;\;\;}       & M_1\oplus M_2\ar[r] & 0 \\
         &            & E_2\ar[u]\ar[r]^{g_2} & M_2\ar[u]_{i_2} & \\}
\]
And it is easy to check that we get back to the original extensions.

In the other direction, we start with an extension:
\[\xymatrix{
0 \ar[r] & N \ar[r]^f & E\ar[r]^{g\;\;\;\;\;\;} & M_1\oplus M_2\ar[r] & 0 \\}\]
and apply $F_1$ to get the pair:
\[\xymatrix{
0 \ar[r] & N \ar[r]^{f} & E_1\ar[r]^{g_1\;\;} & M_1\ar[r] & 0 \\
0 \ar[r] & N \ar[r]^{f} & E_2\ar[r]^{g_2\;\;} & M_2\ar[r] & 0 \\}\]
we then put them back together via $G_1$. to get the extension:
\[\xymatrix{
0 \ar[r] & N \ar[r]^{(f,0)\;\;\;\;\;} & E_1\oplus E_2\ar[r]^{(g_1,g_2)\;\;\;\;\;} & M_1\oplus M_2\ar[r] & 0 .\\}\]
We can then produce a morphism between this result and the original sequence as follows:
\[\xymatrix{
0 \ar[r] & N \ar[d]_{id}\ar[r]^{(f_1,0)\;\;\;\;\;} & E_1\oplus E_2\ar[d]_{\pi_1+\pi_2}\ar[r]^{(g_1,g_2)\;\;\;\;\;} & M_1\oplus M_2\ar[d]_{id}\ar[r] & 0\\
0 \ar[r] & N \ar[r]^f & E\ar[r]^{g\;\;\;\;\;\;} & M_1\oplus M_2\ar[r] & 0 \\
}\]

\item for the second coordinate, we will use a similar technique for constructing our functors between the two groupoids. First we describe the functor
\[F_2\maps\EXT(M,N_1\oplus N_2)\to \EXT(M,N_1)\times \EXT(M,N_2)\]
as follows. Starting with an extension:
\[0\to N_1\oplus N_2\stackrel{f}{\to} E \stackrel{g}{\to}M\to 0\]
we can split this into the two extensions
\[0\to N_1\stackrel{f_1}{\to}E/\im N_2\stackrel{g_1}{\to} M\to 0\]
\[0\to N_2\stackrel{f_2}{\to}E/\im N_1\stackrel{g_2}{\to} M\to 0\] 
where $f_1, f_2, g_1$, and $g_2$ are the appropriate restrictions of $f$ and $g$, respectively. $g_1$ and $g_2$ are surjective, since $\im N_1, \im N_2\subset \im(N_1\oplus N_2)=\ker M$. \\
For the other direction we need a functor:
\[G_2\maps\EXT(M,N_1)\times \EXT(M,N_2)\to \EXT(M,N_1\oplus N_2)\] 
Starting with two extensions
\[0\to N_1\stackrel{f_1}{\to}E_1\stackrel{g_1}{\to} M\to 0\]
\[0\to N_2\stackrel{f_2}{\to}E_2\stackrel{g_2}{\to} M\to 0\] 
we put them together by direct sum together with a modification to the extension $E_1\oplus E_2$. Specifically, this is the extension:
\[0\to N_1\oplus N_2\stackrel{(f_1,f_2)}{\to} (E_1\oplus E_2)/J \stackrel{g_1+g_2}{\to}M\to 0\]
where $J=\{(e_1,e_2)\in E_1\oplus E_2-\im(N_1\oplus N_2)\mid g_1(e_1)+g_2(e_2)=0\}$. 
We can then check, just like in part $1$, that these functors form an equivalence of groupoids.
\end{enumerate}
\end{proof}

\section{The Braiding Span}\label{braidmonbicat}
In this section we will use the groupoids $\EXT(M,N)$ to describe a span which will serve as a braiding for the monoidal bicategory $\Span(\Gpd\downarrow\mathcal{A}_0)$.

If we define the maps $\pi_q,\pi_s\maps \SES(\mathcal{A})\to \mathcal{A}_0$ to be the quotient projection and the sub-object projection from the short exact sequence $0\to N\to E\to M\to 0$, then the span:
\[\xymatrix{ & \SES(\mathcal{A})\ar[dr]^{\pi_s\times\pi_q}\ar[dl]_{\pi_q\times\pi_s}& \\
\mathcal{A}_0\times \mathcal{A}_0\ar[dr]_\oplus & \stackrel{\alpha}{\Longleftarrow} & \mathcal{A}_0\times\mathcal{A}_0\ar[dl]^\oplus\\
 &\mathcal{A}_0 & \\
}\]
will give a braiding for the above category as follows. Given two groupoids over $\mathcal{A}_0$, say $f\maps X\to \mathcal{A}_0$ and $g\maps Y\to \mathcal{A}_0$, the braiding span gives a braid morphisms from $X\times Y$ to $Y\times X$ by the weak pullback of the diagram:
\[\xymatrix{
X\times Y\ar[dr]_{f\times g}&  & \SES(\mathcal{A})\ar[dr]^{\pi_s\times\pi_q}\ar[dl]_{\pi_q\times\pi_s}& & Y\times X\ar[dl]^{g\times f} \\
&\mathcal{A}_0\times \mathcal{A}_0\ar[dr]_\oplus &\stackrel{\alpha}{\Longleftarrow} & \mathcal{A}_0\times\mathcal{A}_0\ar[dl]^\oplus &\\
  & & \mathcal{A}_0 & & \\
}\]
it is easy enough to see that the pullbacks on the different sides of this diagram give equivalent objects. This object is precisely the groupoid 
\[B_{X,Y}=[(x,y,E)\mid 0\to \u{y}\to E\to \u{x}\to 0].\] 

We do however need both pullbacks, as each one give the map to the corresponding leg of the span. We can complete this span by taking a final weak pullback, which will simply give the same groupoid at the top. The result is the span:
\[\xymatrix{ & B_{XY}\ar[dl]_{\pi_x\times \pi_y}\ar[dr]^{\pi_y\times \pi_x} & \\
X\times Y\ar[dr]_{\oplus\circ (f\times g)} &\stackrel{}{\Longleftarrow} &   Y\times X\ar[dl]^{\oplus\circ (g\times f)}\\
 & \mathcal{A}_0&\\
}\] 

Next we describe how the pullbacks from the braiding span in our bicategory $\Span(\Gpd\downarrow\mathcal{A}_0)$ are related to the groupoids $\EXT(M,N)$.
\begin{proposition}\label{BsimEXT}
If $B_{X,Y}$ is the groupoid described by the weak pullback of the diagram:
\[\xymatrix{
X\times Y\ar[dr]_{f\times g}&  & \SES(\mathcal{A})\ar[dr]^{\pi_s\times\pi_q}\ar[dl]_{\pi_q\times\pi_s}& & Y\times X\ar[dl]^{g\times f} \\
&\mathcal{A}_0\times \mathcal{A}_0\ar[dr]_\oplus &\stackrel{\alpha}{\Longleftarrow} & \mathcal{A}_0\times\mathcal{A}_0\ar[dl]^\oplus &\\
  & & \mathcal{A}_0 & & \\
}\]
then $\displaystyle{B_{X,Y}\simeq \coprod_{[(x,y)]} \EXT(\u{x},\u{y})}$
\end{proposition}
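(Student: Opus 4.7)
The plan is to construct an explicit equivalence of groupoids by first unfolding the weak pullback to identify the objects and morphisms of $B_{X,Y}$, then building a functor to the coproduct and checking it is an equivalence. The key insight is that the bilinearity of $\EXT$ is not needed here; what matters is pattern-matching the weak pullback data to the definition of $\EXT$.

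First, I would unfold the two weak pullbacks defining $B_{X,Y}$. Composing the $(\pi_q\times\pi_s)$ and $(\pi_s\times\pi_q)$ legs of the $\SES(\mathcal{A})$ span with the diagonal identifications through $\oplus$ forces the sub-object of the chosen short exact sequence to be isomorphic to $\underline{y}$ and the quotient to be isomorphic to $\underline{x}$. Thus an object of $B_{X,Y}$ can be written as a tuple $(x,y,E,\iota,\pi)$ where $E\colon 0\to N\to E\to M\to 0$ lies in $\SES(\mathcal{A})$ together with chosen isomorphisms $\iota\colon\underline{y}\to N$ and $\pi\colon M\to\underline{x}$ in $\mathcal{A}_0$; morphisms are triples of isomorphisms $(\sigma\colon x\to x',\,\tau\colon y\to y',\,\phi\colon E\to E')$ making the evident squares commute. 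Up to the isomorphisms $\iota$ and $\pi$, this is exactly the description $[(x,y,E)\mid 0\to\underline{y}\to E\to\underline{x}\to 0]$ given in the statement.

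Next I would define the functor $\Phi\colon B_{X,Y}\to\coprod_{[(x,y)]}\EXT(\underline{x},\underline{y})$. Choose once and for all a representative $(x_0,y_0)$ in every isomorphism class of $X\times Y$. Given $(x,y,E,\iota,\pi)$, pick isomorphisms $\sigma\colon x\to x_0$ in $X$ and $\tau\colon y\to y_0$ in $Y$; the induced isomorphisms in $\mathcal{A}_0$ let us push the short exact sequence forward to one of the form $0\to\underline{y_0}\to E'\to\underline{x_0}\to 0$, which we place in the $[(x_0,y_0)]$-component. A quasi-inverse $\Psi$ sends a short exact sequence in the $[(x_0,y_0)]$-component to the object $(x_0,y_0,E,\mathrm{id},\mathrm{id})$, viewed inside $B_{X,Y}$. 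One checks that $\Phi\circ\Psi\cong\mathrm{id}$ trivially, and $\Psi\circ\Phi\cong\mathrm{id}$ via the natural isomorphism whose components are the triples $(\sigma,\tau,\phi)$ built from the representative-choosing isomorphisms above.

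The main obstacle is the bookkeeping of these representative choices: different choices of $(\sigma,\tau)$ yield different but naturally isomorphic functors, and one must verify that $\Phi$ respects morphisms, i.e.\ that a morphism $(\sigma,\tau,\phi)\colon(x,y,E)\to(x',y',E')$ is sent to a well-defined morphism between the corresponding short exact sequences in $\EXT(\underline{x_0},\underline{y_0})$. This reduces to the observation that any two isomorphisms $x\to x_0$ in $X$ differ by an element of $\Aut(x_0)$, and the resulting ambiguity in the image is absorbed by the morphism data in $\EXT$. Hence $\Phi$ is fully faithful and essentially surjective, so it is the desired equivalence of groupoids.
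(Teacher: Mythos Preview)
Your approach is essentially the same as the paper's: both construct explicit functors $B_{X,Y}\leftrightarrows\coprod_{[(x,y)]}\EXT(\underline{x},\underline{y})$ and check they form an equivalence. The paper's proof is considerably terser---it writes the functor as $(x,y,E)\mapsto(0\to\underline{y}\to E\to\underline{x}\to 0)$ and declares the verification ``easy to check''---whereas you are more explicit about the representative-choosing needed to land in a coproduct indexed by isomorphism classes, which the paper silently elides.
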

\begin{proof}
We start by describing the groupoid $B_{X,Y}$ coming from the weak pullback. $B_{X,Y}$ is a result of the weak pullback of following diagram: 
\[\xymatrix{
& &B_{X,Y}\ar[dr]\ar[dl] & & \\
&B_{X,Y}\ar[dr]\ar[dl] &\stackrel{}{\Longleftarrow} &B_{X,Y}\ar[dr]\ar[dl] & \\
X\times Y \times Z\ar[dr]_{f\times g\times h}& \stackrel{}{\Longleftarrow} & \SES(\mathcal{A})\times Z\ar[dr]^{\pi_s\times\pi_q\times 1_Z}\ar[dl]_{\pi_q\times\pi_s}&\stackrel{}{\Longleftarrow} & Y\times X\times Z\ar[dl]^{g\times f\times h} \\
&\mathcal{A}_0\times \mathcal{A}_0\times \mathcal{A}_0\ar[dr]_\oplus &\stackrel{}{\Longleftarrow} & \mathcal{A}_0\times\mathcal{A}_0\times \mathcal{A}_0\ar[dl]^\oplus &\\
  & & \mathcal{A}_0 & & \\
}\]
The apex of this span is the groupoid 
\[B_{X,Y}=\left[(x,y,E)|0\to \u{y}\to E \to \u{x}\to 0\;\;\emph{\rm is exact}\right]\] 
where the morphisms are isomorphisms of short exact sequences. We then define the functors $F_{X,Y}$ and $G_{X,Y}$ as follows. Let $\displaystyle{F_{X,Y}\maps B_{X,Y}\to \coprod_{[(x,y)]} \EXT(\u{x},\u{y})}$ be the functor that takes $(x,y,E)$ to the short exact sequence $0\to \u{y}\to E \to \u{x}\to 0$. Also, let $\displaystyle{G_{X,Y}\maps \coprod_{[(x,y)]} \EXT(\u{x},\u{y})\to B_{X,Y}}$ takes the short exact sequence $0\to \u{y}\to E \to \u{x}\to 0$ to $(x,y,E)$. It is easy to check these functor form an equivalence for these groupoids.
\end{proof}
Unlike the monoidal structure on $\Span(\Gpd\downarrow\mathcal{A}_0)$, the braiding will not be trivial. We first need to define the hexagonator's $R$ and $S$. we start by considering the hexagon diagrams for each of these $2$-morphisms.
Starting with the hexagon identity:
\[\xy
(0,-40)*{\mathcal{A}_0}="A";
(-20,-10)*{(YX)Z}="LB";
(20,-10)*{Y(XZ)}="RB";
(-40,0)*{(XY)Z}="L";
(40,0)*{Y(ZX)}="R";
(-20,10)*{X(YZ)}="LT";
(20,10)*{(YZ)X}="RT";
{\ar^{\alpha} "L";"LT"};
{\ar^{B_{X,Y}} "L";"LB"};
{\ar^{\alpha} "RT";"R"};
{\ar^{B_{X,Z}} "RB";"R"};
{\ar^{\alpha} "LB";"RB"};
{\ar^{B_{X,YZ}} "LT";"RT"};
{\ar^{} "L";"A"};
{\ar^{} "LT";"A"};
{\ar^{} "LB";"A"};
{\ar^{} "R";"A"};
{\ar^{} "RT";"A"};
{\ar^{} "RB";"A"};
(0,0)*{\Downarrow R}
\endxy\]
where $\alpha$ is the associator and the $B$'s represent the braiding spans, which will describe in detail. As noted before, $B_{X,Y}$ is a result of the weak pullback of following diagram: 
\[\xymatrix{
& &B_{X,Y}\ar[dr]\ar[dl] & & \\
&B_{X,Y}\ar[dr]\ar[dl] &\stackrel{}{\Longleftarrow} &B_{X,Y}\ar[dr]\ar[dl] & \\
X\times Y \times Z\ar[dr]_{f\times g\times h}& \stackrel{}{\Longleftarrow} & \SES(\mathcal{A})\times Z\ar[dr]^{\pi_s\times\pi_q\times 1_Z}\ar[dl]_{\pi_q\times\pi_s}&\stackrel{}{\Longleftarrow} & Y\times X\times Z\ar[dl]^{g\times f\times h} \\
&\mathcal{A}_0\times \mathcal{A}_0\times \mathcal{A}_0\ar[dr]_\oplus &\stackrel{}{\Longleftarrow} & \mathcal{A}_0\times\mathcal{A}_0\times \mathcal{A}_0\ar[dl]^\oplus &\\
  & & \mathcal{A}_0 & & \\
}\]
Intuitively, this can be thought of as the groupoid $B_{X,Y}=\left[(x,y,E)|0\to \u{y}\to E \to \u{x}\to 0\;\;\emph{\rm is exact}\right]$ where the morphisms are isomorphisms of short exact sequences. Following this idea, we need to then compose the span:
\[\xymatrix{ & B_{X,Y}\times Z\ar[dl]\ar[dr] & \\
(X\times Y)\times Z\ar[dr] & &   (Y\times X)\times Z\ar[dl]\\
 & \mathcal{A}_0&\\
}\]
with the span
\[\xymatrix{ & Y\times B_{X,Z}\ar[dl]\ar[dr] & \\
Y\times (X\times Z)\ar[dr] & &   Y\times (Z\times X)\ar[dl]\\
 & \mathcal{A}_0&\\
}\]
and compare it to the span:
\[\xymatrix{ & B_{X,Y\times Z}\ar[dl]\ar[dr] & \\
(Y\times Z)\times X\ar[dr] & &   X\times (Y\times Z)\ar[dl]\\
 & \mathcal{A}_0&\\
}\]
By our intuitive description we have that:
\[B_{X,Y\times Z}=[(x,y,z,E)|0\to \u{y}\oplus \u{z}\to E \to \u{x}\to 0]\]
and the composition of the first two spans looks like:
\[(Y\times B_{X,Z})(B_{X,Y}\times Z)=[(x,y,z,E_1,E_2)|0\to \u{y}\to E_1 \to \u{x}\to 0, 0\to \u{z}\to E_2 \to \u{x}\to 0] \]
By Proposition \ref{BsimEXT} (or a similar argument) we get:
\[B_{X,Y\times Z}\simeq\coprod_{[(x,y,z)]} \EXT(\u{x}, \u{y}\oplus \u{z}).\]
Also, with a little extra work, we get:
\[(Y\times B_{X,Z})(B_{X,Y}\times Z)\simeq \coprod_{[(x,y,z)]} \EXT(\u{x},\u{y})\otimes\EXT(\u{x},\u{z}).\]
Thus, by Proposition \ref{EXTbil};
\[\coprod_{[(x,y,z)]} \EXT(\u{x}, \u{y}\oplus \u{z})\simeq\coprod_{[(x,y,z)]} \EXT(\u{x},\u{y})\otimes\EXT(\u{x},\u{z})\]
therefore $B_{X,Y\times Z}\simeq B_{X,Y\times Z}$ and the hexagon commutes up to the equivalence:
\[R:= (G_{X,Y}\times G_{X,Z})\circ F_2 \circ F_{X,Y\times Z}\]
By following this composition we can define $R$ explicitly. Specifically, we see that 
\[R\maps (x,y,z,E)\to (x,y,z,E/\underline{z},E/\underline{y})\]
Similarly, consider the hexagon diagram for $S$:

\[\xy
(0,-40)*{\mathcal{A}_0}="A";
(-20,-10)*{X(ZY)}="LB";
(20,-10)*{(XZ)Y}="RB";
(-40,0)*{X(YZ)}="L";
(40,0)*{(ZX)Y}="R";
(-20,10)*{(XY)Z}="LT";
(20,10)*{Z(XY)}="RT";
{\ar^{\alpha^*} "L";"LT"};
{\ar^{B_{Y,Z}} "L";"LB"};
{\ar^{\alpha^*} "RT";"R"};
{\ar^{B_{X,Z}} "RB";"R"};
{\ar^{\alpha^*} "LB";"RB"};
{\ar^{B_{XY,Z}} "LT";"RT"};
{\ar^{} "L";"A"};
{\ar^{} "LT";"A"};
{\ar^{} "LB";"A"};
{\ar^{} "R";"A"};
{\ar^{} "RT";"A"};
{\ar^{} "RB";"A"};
(0,0)*{\Downarrow S}
\endxy\]
which will commute up to the equivalence:
\[S:=(G_{X,Z}\times G_{Y,Z})\circ F_1\circ F_{X\times Y,Z}\]
Just like before, we can also define $S$ explicitly:
\[S\maps (x,y,z,E)\to (x,y,z,g^{-1}(\underline{x}),g^{-1}(\underline{y}))\]
We are now ready to state the main theorem of this section. We will be using the definition of braided monoidal bicategory \ref{braidedDEF} from the appendix.
\begin{theorem}
The monoidal bicategory $\Span(\Gpd\downarrow\mathcal{A}_0)$ is a braided monoidal bicategory where:
\begin{itemize}
\item The adjoint equivalence $b$ is given by the span:
\[\xymatrix{ & B_{X,Y}\ar[dl]_{\pi_x}\ar[dr]^{\pi_y} & \\
X\times Y\ar[dr]_{\oplus\circ (f\times g)} &\stackrel{}{\Longleftarrow} &   Y\times X\ar[dl]^{\oplus\circ (g\times f)}\\
 & \mathcal{A}_0&\\
}\]
\item The invertible modifications $R$ and $S$ are given above.
\end{itemize}
\end{theorem}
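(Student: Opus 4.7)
The plan is to verify the definition of braided monoidal bicategory from the appendix piece by piece, exploiting the fact that most of the structural work has already been done in Proposition \ref{EXTbil} and Proposition \ref{BsimEXT}. The argument splits naturally into three stages: first check that $b$ is an adjoint equivalence in the pseudonatural sense, then check that $R$ and $S$ are genuine invertible modifications, and finally verify the four coherence axioms of a braided monoidal bicategory.

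First I would handle the adjoint equivalence $b$. Pseudonaturality of $b$ with respect to a $1$-morphism in $\Span(\Gpd\downarrow\mathcal{A}_0)$ reduces to constructing the appropriate weak pullback squares; since $B_{X,Y}$ is built from $\SES(\mathcal{A})$ by weak pullback along the grading functors, naturality squares are just composites of weak pullbacks and hence commute up to canonical $2$-isomorphism. To obtain the adjoint inverse $b^{-1}$, I would use the same span read in the opposite direction: because $B_{X,Y}\simeq B_{Y,X}$ via the groupoid isomorphism swapping $E$ with itself but reversing the role of sub and quotient (this is trivial when we remember the data is symmetric in labeling), composition of $b$ with its reverse produces a span equivalent to the identity span on $X\times Y$, giving the unit and counit of the adjoint equivalence.

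Next I would check that $R$ and $S$ are invertible modifications. This is where Proposition \ref{EXTbil} does the heavy lifting: the explicit formula $R\maps (x,y,z,E)\mapsto (x,y,z,E/\u{z},E/\u{y})$ is precisely the equivalence functor $F_2$ from the proof of Proposition \ref{EXTbil} packaged as a $2$-cell between the two sides of the hexagon. Invertibility is immediate from the equivalence established in that proposition. For modification-ness, I would check that $R$ is compatible with horizontal composition by arbitrary $1$-morphisms of spans, which again reduces to weak pullback pasting. The argument for $S$ is entirely parallel, using the first part of Proposition \ref{EXTbil} in place of the second.

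The hard part will be the four coherence axioms for a braided monoidal bicategory: the two ``$3$-cocycle'' type axioms for $R$ involving four objects and the pentagonator, the analogous two for $S$, and the axiom relating $R$ and $S$ that guarantees the braiding is genuinely a braiding rather than two unrelated hexagonators. Because our pentagonator $\pi$ is trivial, these diagrams collapse considerably: in each case both sides become two different ways of decomposing an extension $0\to \u{y}\oplus \u{z}\oplus \u{w}\to E\to \u{x}\to 0$ (or its mirror) into constituent extensions by submodules. I would verify each axiom by unwinding the explicit description of $R$ and $S$ and checking that the two composite functors between the relevant $\EXT$-groupoids agree, which ultimately reduces to the associativity and compatibility of the equivalences
\[\EXT(\u{x},\u{y}\oplus \u{z}\oplus \u{w})\simeq \EXT(\u{x},\u{y})\times\EXT(\u{x},\u{z})\times\EXT(\u{x},\u{w})\]
coming from iterated application of Proposition \ref{EXTbil}. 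The $R$-$S$ compatibility axiom is the most subtle, but it follows from the observation that both $R$ and $S$ are induced from the same underlying operation of splitting an extension along a direct-sum decomposition of either its sub or its quotient, so the two hexagons interact via the standard snake-lemma style identities in the hereditary abelian category $\Rep(Q)$.
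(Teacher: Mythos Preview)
Your treatment of the coherence axioms is essentially the paper's own approach: exploit the trivial pentagonator to collapse the big polytopes to simpler shapes (tetrahedra for the $1$-through-$3$ and $3$-through-$1$ shuffles, a truncated cube for the $2$-through-$2$ shuffle), then reduce each to the statement that two different iterated applications of Proposition~\ref{EXTbil} agree up to a natural isomorphism like $(E/A)/B\cong E/(A\oplus B)$. The paper carries this out explicitly for the first two polytopes and sketches the third, just as you propose.

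However, your argument for why $b$ is an adjoint equivalence contains a genuine error. You claim $B_{X,Y}\simeq B_{Y,X}$ ``via the groupoid isomorphism swapping $E$ with itself but reversing the role of sub and quotient,'' but there is no such isomorphism: an object of $B_{X,Y}$ is a short exact sequence $0\to\u{y}\to E\to\u{x}\to 0$, and you cannot turn this into a sequence $0\to\u{x}\to E\to\u{y}\to 0$ by relabelling. Short exact sequences are not symmetric in sub and quotient. Consequently the composite of $b$ with its reversed span is \emph{not} equivalent to the identity span; its apex is a groupoid of pairs of extensions, and degroupoidifying it produces the scalar $q^{-\langle\u{x},\u{y}\rangle}\cdot q^{-\langle\u{x},\u{y}\rangle}$ rather than $1$. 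Indeed, the whole point of this braiding is that it is genuinely nonsymmetric, encoding the Euler form; if $b$ were its own inverse in the way you describe, the braiding would degroupoidify to a symmetry. The paper, for what it is worth, does not address the adjoint-equivalence datum at all in its proof and focuses solely on the four coherence laws; if you want to supply this piece you will need a different candidate for $b^{-1}$ and a different argument.
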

\begin{proof}
The work in this proof will be to check the four coherence laws in the definition of braided monoidal bicategory \ref{braidedDEF}.
Since the tensor product for $\Span(\Gpd\downarrow\mathcal{A}_0)$ in Section \ref{monbicat} has an associated that satisfies the pentagon identity on the nose, we will be able to simplify our work. When we compare the groupoids $(AB)C$ with $A(BC)$, we see that the only real difference is in the way they arte parenthesized. Specifically, and object in $(AB)C$ is a triple $((a,b),c)$, while an object in $A(BC)$ is a triple $(a,(b,c))$. So these are similar enough that even though the associator is nontrivial, we will pretend it is and write $(a,b,c)$ for an object of $ABC$, and remove any pentagon diagrams from our coherence laws. Without the need for the associator pentagon in any of the diagrams, we can reduce each diagram to some simplified polytopes. We will draw each simplified polytope before checking them.
\begin{itemize}
\item The first simplified polytope governs ways to shuffle 1 object through 3 objects. There are four ways to shuffle 1 object into 3 objects, so this polytope will be a tetrahedron, with the tensor product of four objects on each corner. For compactness we will write these objects based on the order or the four objects without the $\otimes$, which gives the diagram:
\[\xymatrix{
       &BACD\ar[dr]\ar @{-}[d] & \\
ABCD\ar[ur]\ar[rr]\ar[dr] &{ }\ar[d] & BCDA\\
& BCAD\ar[ur] & \\
}\]
where each side of the tetrahedron is filled in with the appropriate $R$. Verifying this diagram comes down to checking that the two composites of $R$'s going from the shortest path to the longest path are equal. To do this, we need to describe the groupoids at the apex of the four paths around the tetrahedron, then describe what the map between these span coming from $R$ does. The shortest path:
\[\xymatrix{ABCD\ar[rr]& & BCDA}\]
is just the braiding span 
\[B_{A,BCD}=[(a,b,c,d,E\mid\]
\[ 0\to \u{b}\oplus\u{c}\oplus\u{d}\to E\to \u{a}\to 0].\]
The path:
\[\xymatrix{
       &BACD\ar[dr] & \\
ABCD\ar[ur] & & BCDA\\
}\]
gives the composite of the two braiding spans: 
\[(B_{A,B}\times CD)(B\times B_{A,CD})=[(a,b,c,d,E_1,E_2)\mid\]
\[0\to\u{b}\to E_1\to \u{a}\to 0, 0\to \u{c}\oplus\u{d}\to E_2\to\u{a}\to 0]\]
Also, the path:
\[\xymatrix{
ABCD\ar[dr] & & BCDA\\
& BCAD\ar[ur] & \\
}\]
gives the composite of the two braiding spans:
\[(B_{A,BC}\times D)(BC\times B_{A,D})=[(a,b,c,d,E_3,E_4)\mid\]
\[0\to\u{b}\oplus \u{c}\to E_3\to \u{a}\to 0, 0\to \u{d}\to E_4\to\u{a}\to 0]\]
Finally, we get the longest path:
\[\xymatrix{
       &BACD\ar[dd] & \\
ABCD\ar[ur] & & BCDA\\
& BCAD\ar[ur] & \\
}\]
which gives the composite of the three braiding spans:
\[(B_{A,B}\times CD)(B\times B_{A,C}\times D)(BC\times B_{A,D})=[(a,b,c,d,E_5,E_6,E_7)\mid\]
\[ 0\to \u{b}\to E_5\to\u{a}\to 0,0\to \u{c}\to E_6\to\u{a}\to 0,0\to \u{d}\to E_7\to\u{a}\to 0]\]
What we need to show is that the two way to get from the short path to the long path are the same. This is the same as showing the following diagram commutes:
\[\xymatrix{
B_{A,BCD}\ar[r]^{R}\ar[d]_{R} & (B_{A,B}\times CD)(B\times B_{A,CD})\ar[d]^{R}\\
(B_{A,BC}\times D)(BC\times B_{A,D})\ar[r]_{R\;\;\;\;\;\;\;\;\;\;\;\;\;\;\;\;\;} & (B_{A,B}\times CD)(B\times B_{A,C}\times D)(BC\times B_{A,D})\\
}\]
Following $R$ along the top path of the square, we get that the short exact sequence:
\[0\to \u{b}\oplus\u{c}\oplus\u{d}\to E\to \u{a}\to 0\]
is split once at $\u{b}$:
\[\xymatrix{0\to \u{b}\to E/(\u{c}\oplus\u{d})\to \u{a}\to 0\\
0\to \u{c}\oplus\u{d}\to E/\u{b}\to \u{a}\to 0\\
}\]
and then again between $\u{c}$ and $\u{d}$:
\[\xymatrix{
0\to \u{b}\to E/(\u{c}\oplus\u{d})\to \u{a}\to 0\\
0\to \u{c}\to (E/\u{b})/\u{d}\to \u{a}\to 0\\
0\to \u{d}\to (E/\u{b})/\u{c}\to \u{a}\to 0\\
}
\]
Similarly, following the bottom path, we get the same short exact sequence:
\[0\to \u{b}\oplus\u{c}\oplus\u{d}\to E\to \u{a}\to 0 \]
but this time we split off $\u{d}$:
\[\xymatrix{
0\to \u{b}\oplus\u{c}\to E/\u{d}\to \u{a}\to 0\\
0\to \u{d}\to E/(\u{b}\oplus\u{c})\to \u{a}\to 0\\
}\]
followed by a split between $\u{b}$ and $\u{c}$:
\[\xymatrix{
0\to \u{b}\to (E/\u{d})\u{c}\to \u{a}\to 0\\
0\to \u{c}\to (E/\u{d})/\u{b}\to \u{a}\to 0\\
0\to \u{d}\to E/(\u{b}\oplus\u{c})\to \u{a}\to 0\\
}
\]
To show that these two paths are the same, we need the following general fact. Given a module $E$ and a submodule $A\oplus B$, then the following are  natural isomorphic:
\[(E/A)/B\iso E/(A\oplus B).\]
With this, the three short exact sequences of the first path are natural isomorphic to the three short exact sequence of the second path. Thus the composite of the two functors labeling the front faces of our tetrahedron is naturally isomorphic to the composite of the back two. Recall, that in our category $\Span(\Gpd\downarrow \mathcal{A}_0)$, the $2$-morphisms are equivalence classes of maps between spans, so our polytope commutes.
\item
The second simplified polytope governs the ways to shuffle 3 objects through 1 object. Similar to the first diagram, there are four ways to shuffle three objects $A, B, C$ into one other object $D$; these four objects would form the points of a tetrahedron with braidings as edges. Again, the four vertices can be described by the order of the four objects as follows:
\[\xymatrix{
       &ABDC\ar[dr]\ar @{-}[d] & \\
ABCD\ar[ur]\ar[rr]\ar[dr] &{ }\ar[d] & DABC\\
& ADBC\ar[ur] & \\
}\]
However, the sides of this tetrahedron are filled in with the appropriate $S$. Verifying this diagram comes down to checking that the two composites of $S$'s going from the shortest path to the longest path are equal. To do this, we need to describe the groupoids at the apex of the four paths around the tetrahedron, then describe what the map between these span coming from $R$ does. The shortest path:
\[\xymatrix{ABCD\ar[rr]& & DABC}\]
Is just the braiding span:
\[B_{ABC,D} = \{(a,b,c,d,E)\mid 0\to \u{d}\to E\to \u{a}\oplus\u{b}\oplus\u{c}\to 0\}\]
The upper path:
\[\xymatrix{
       &ABDC\ar[dr] & \\
ABCD\ar[ur] & & DABC\\
}\]
gives the composite of the two braiding spans: 
\[(AB\times B_{C,D})(B_{AB,D}\times C).\]
the lower path:
\[\xymatrix{
ABCD\ar[dr] & & DABC\\
& ADBC\ar[ur] & \\
}\]
gives the composite of the two braiding spans:
\[(A\times B_{BC,D})(B_{A,D}\times BC).\]
The longest path:
\[\xymatrix{
       &ABDC\ar[dd] & \\
ABCD\ar[ur] & & DABC\\
& ADBC\ar[ur] & \\
}\]
which gives the composite of the three braiding spans:
\[(AB\times B_{C,D})(A\times B_{B,D}\times C)(B_{A,D}\times BC).\]
What we need to show is that the two way to get from the short path to the long path are the same. This is the same as showing the following diagram commutes:
\[\xymatrix{
B_{ABC,D}\ar[r]^{S}\ar[d]_{S} & (AB\times B_{C,D})(B_{AB,D}\times C)\ar[d]^{S}\\
(A\times B_{BC,D})(B_{A,D}\times BC)\ar[r]_{S\;\;\;\;\;\;\;\;\;\;\;\;\;\;\;\;\;} & (AB\times B_{C,D})(A\times B_{B,D}\times C)(B_{A,D}\times BC)\\
}\]
Following $R$ along the top path of the square, we get that the short exact sequence:
\[0\to \u{d}\to E\to \u{a}\oplus\u{b}\oplus\u{c}\to 0\]
is split once at $\u{c}$:
\[\xymatrix{0\to \u{d}\to g^{-1}(\u{c})\to \u{c}\to 0\\
0\to \u{d}\to g^{-1}(\u{a}\oplus\u{b})\to \u{a}\oplus\u{b}\to 0\\
}\]
and then again between $\u{a}$ and $\u{b}$:
\[\xymatrix{
0\to \u{d}\to g^{-1}(\u{c})\to \u{c}\to 0\\
0\to \u{d}\to g^{-1}(\u{b})\to \u{b}\to 0\\
0\to \u{d}\to g^{-1}(\u{a})\to \u{a}\to 0\\
}
\]
Similarly, following the bottom path, we get the same short exact sequence:
\[0\to \u{b}\oplus\u{c}\oplus\u{d}\to E\to \u{a}\to 0 \]
but this time we split off $\u{d}$:
\[\xymatrix{
0\to \u{d}\to g^{-1}(\u{a})\to \u{a}\to 0\\
0\to \u{d}\to g^{-1}(\u{b}\oplus\u{c})\to \u{b}\oplus\u{c}\to 0\\
}\]
followed by a split between $\u{b}$ and $\u{c}$:
\[\xymatrix{
0\to \u{d}\to g^{-1}(\u{c})\to \u{c}\to 0\\
0\to \u{d}\to g^{-1}(\u{b})\to \u{b}\to 0\\
0\to \u{d}\to g^{-1}(\u{a})\to \u{a}\to 0\\
}
\]
it is clear that the paths are equal.
\item
The third simplified polytope governs the ways to shuffle 2 objects through 2 other objects. Just like before we can ignore the pentagon identity in the diagram. However, the ways to shuffle two objects $A,B$ into two others $C,D$ form a six-vertex polytope, a cube with two of its corners completely truncated. For this diagram, it is easier to see if we split it into a `front' and `back' view:
\[\xymatrix{
                                    & & & ACDB\ar[ddrrr] & & &      \\
                                    & & & \Uparrow S     & & &      \\
ABCD\ar[uurrr]\ar[ddrrr]\ar[rrrrrr] & & &                & & & CDAB \\
                                    & & & \Downarrow R   & & &      \\      
                                    & & & CABD\ar[uurrr] & & &      \\
}\]
\[\xymatrix{                                   
                                & &                      & ACDB\ar[ddrrr]\ar[ddr] &              & &     \\
                                & &\Downarrow R          &                        &\Downarrow R  & &     \\
ABCD\ar[uurrr]\ar[ddrrr]\ar[rr] & & ACBD\ar[uur]\ar[ddr] &      \iso              & CADB \ar[rr] & &CDAB \\      
                                & &\Uparrow S            &                        &\Uparrow S    & &     \\
                                & &                      & CABD\ar[uurrr]\ar[uur] &              & &     \\
}\]
Again, checking this polytope involves verifying the two composites of faces from the shortest path to the longest path are isomorphic. In total there are six different paths on this polytope from $ABCD$ to $CDAB$.
Starting with the shortest path:
\[\xymatrix{
ABCD\ar[rrrrrr] & & &                & & & CDAB \\
}\]
we get the braiding span with apex groupoid:
\[B_{AB,CD}\{(a,b,c,d,E_1)\mid\]
\[0\to \u{c}\oplus\u{d}\to E_1\to \u{a}\oplus\u{b}\to 0\}.\]
Working towards the top we get the path: 
\[\xymatrix{
                                    & & & ACDB\ar[ddrrr] & & &      \\
                                    & & &     & & &      \\
ABCD\ar[uurrr] & & &                & & & CDAB \\
}\]
which is the composite of the two braiding spans:
\[(B_{A,CD}\times B)(A\times B_{B,CD})=\{(a,b,c,d,E_2,E_3)\mid\]
\[0\to \u{c}\oplus\u{d}\to E_2\to \u{a}\to 0, 0\to \u{c}\oplus\u{d}\to E_3\to \u{b}\to 0\}.\]
Going towards the bottom of the polytope, we get the path:
\[\xymatrix{
ABCD\ar[ddrrr] & & &                & & & CDAB \\
                                    & & &   & & &      \\      
                                    & & & CABD\ar[uurrr] & & &      \\
                                    }\]
which is the composite of the two braiding spans:
\[(C\times B_{AB,D})(B_{AB,C}\times D)=\{(a,b,c,d,E_4, E_5)\mid\]
\[0\to\u{c}\to E_4\to \u{a}\oplus\u{b}\to 0, 0\to \u{d}\to E_5\to \u{a}\oplus\u{b}\to 0\}.\]
Along the top back of the polytope, the path:
\[\xymatrix{
                               & &                      & ACDB\ar[ddrrr] &              & &     \\
                                & &        &                        &  & &     \\
ABCD\ar[rr] & & ACBD\ar[uur] &              & & &CDAB \\      
}\]
is the composite of the three spans:
\[(B_{A,CD}\times B)(A\times C B_{B,D})(A\times B_{B,C}\times D)=\{(a,b,c,d,E_6,E_7,E_8\mid \]
\[0\to \u{c}\oplus\u{d}\to E_6\to \u{a}\to 0, 0\to \u{c}\to E_7\to \u{b}\to 0, 0\to \u{d}\to E_8\to \u{b}\to 0\}. \]
Along the bottom back of the polytope, we get the path:
\[\xymatrix{
ABCD\ar[rr] & & ACBD\ar[ddr] &                &  & &CDAB \\      
            & &              &                &  & &     \\
            & &              & CABD\ar[uurrr] &  & &     \\
}\]
which gives the composite of the three braiding spans:
\[(C\times B_{AB,D})(B_{A,C}\times BD)(A\times B_{B,C}\times D)=\{(a,b,c,d,E_9,E_{10},E_{11})\mid\]
\[0\to \u{c}\to E_{9}\to \u{a}\to 0, 0\to \u{d}\to E_{10}\to \u{a}\to 0, 0\to \u{c}\oplus\u{d}\to E_{11}\to \u{b}\to 0\}\]
Finally, we have the longest path:
\[\xymatrix{
                                & &                      & ACDB\ar[ddr] &              & &     \\
                                & &         &                        &  & &     \\
ABCD\ar[rr] & & ACBD\ar[uur]\ar[ddr] &      \iso              & CADB \ar[rr] & &CDAB \\      
                                & &           &                        &    & &     \\
                                & &                      & CABD\ar[uur] &              & &     \\
}\]
Which gives the composite of the four braiding spans:
\[(C \times B_{A,D}\times B)(B_{A,C}\times DB)(AC\times B_{B,D})(A\times B_{B,C}\times D)=\{(a,b,c,d,E_7,E_8,E_9,E_{10}\mid\]
\[0\to \u{c}\to E_{9}\to \u{a}\to 0, 0\to \u{d}\to E_{10}\to \u{a}\to 0,\]
\[0\to \u{c}\to E_7\to \u{b}\to 0, 0\to \u{d}\to E_8\to \u{b}\to 0\}\]
Putting all of these together, the polytope will commute if the composite of functors:\\
\[\xymatrix{
B_{AB,CD}\ar[d]^S \\
(B_{A,CD}\times B)(A\times B_{B,CD})\ar[d]^R \\
(B_{A,CD}\times B)(A\times C B_{B,D})(A\times B_{B,C}\times D)\ar[d]^R \\
(C \times B_{A,D}\times B)(B_{A,C}\times DB)(AC\times B_{B,D})(A\times B_{B,C}\times D) \\
}\]
is equivalent to the composite of functors:
\[\xymatrix{
B_{AB,CD}\ar[d]^R\\
(C\times B_{AB,D})(B_{AB,C}\times D)\ar[d]^S\\
(C\times B_{AB,D})(B_{A,C}\times BD)(A\times B_{B,C}\times D)\ar[d]^S\\
(C \times B_{A,D}\times B)(B_{A,C}\times DB)(AC\times B_{B,D})(A\times B_{B,C}\times D) \\
}\]
as maps between the spans. Just like the previous polytopes, verifying this requires a bit of algebra. 
\end{itemize}
\end{proof}

\chapter{The Hopf 2-Algebra Structure}\label{H2A}
In order to complete the groupoidification of the Hall algebra, we would need to construct spans of groupoids which will stand in for the multiplication, unit, comultiplication, counit, and antipode, and then show that with these spans we get a Hopf $2$-algebra in the braided monoidal bicategory $\Span(\Gpd\downarrow\mathcal{A}_0)$. What will do here is describe the multiplication and comultiplication spans, and show that they degroupoidify into the multiplication and comultiplication in Chapter \ref{HA}. In later work we will define the unit and counit, as well as the coherence isomorphisms that are part of the definition of Hopf $2$-algebra in Neuchl \cite{Neuchl} and Pfeiffer \cite{Pfeiffer}.
\section{The Multiplication and Comultiplication Spans}
We start with the multiplication span. Since the Hall algebra product can be seen as a linear operator
\[  
\begin{array}{ccc} 
 \R[\u{X}] \tensor \R[\u{X}] &\to& \R[\u{X}] \\
         a \tensor b       & \mapsto & a\cdot b 
\end{array}
\]
it is natural to seek a span of groupoids 
\[
\xymatrix@!C{
& {\rm ???}\ar[dl]_{q} \ar[dr]^{p} & \\
X & & X\times X
}
\]
that gives this operator.  Indeed, there is a very natural span that
gives this product.  This will allow us to groupoidify the algebra
$U_q^+ \g$.  

We start by defining a groupoid $\SES(Q)$ to serve as the apex of this
span.  An object of $\SES(Q)$ is a short exact sequence in $\Rep(Q)$,
and a morphism from
\[ 0\to N \stackrel{f}{\to} E \stackrel{g}{\to} M\to 0 \]
to
\[ 0\to N' \stackrel{f'}{\to} E' \stackrel{g'}{\to} M'\to 0 \]
is a commutative diagram
\[
\xymatrix{
0 \ar[r] & 
N \ar[r]^{f} \ar[d]^{\alpha} & 
E \ar[r]^{g} \ar[d]^{\beta} & 
M \ar[r] \ar[d]^{\gamma} & 0 \\
0 \ar[r] & N' \ar[r]^{f'} & E' \ar[r]^{g'} & M' \ar[r] & 0 \\
}
\]
where $\alpha,\beta,$ and $\gamma$ are isomorphisms of
quiver representations.

Next, we define the span
\[
\xymatrix@!C{
& \SES(X)\ar[dl]_{q} \ar[dr]^{p} & \\
X & & X\times X
}
\]
where $p$ and $q$ are given on objects by
\[
\begin{array}{ccl}
  p(0\to N \stackrel{f}{\to} E \stackrel{g}{\to} M\to 0) &=& (M,N) \\
  q(0\to N \stackrel{f}{\to} E \stackrel{g}{\to} M\to 0) &=& E 
\end{array}
\]
and defined in the natural way on morphisms.  This span captures the
idea behind the standard Hall algebra multiplication.  Given two
quiver representations $M$ and $N$, this span relates them to every 
representation $E$ that is an extension of $M$ by $N$.

Before we degroupoidify this span, we need to decide on a convention. As stated in previous parts of this work \ref{groupoidification}, the correct choice is to work with homology and $\alpha$-degroupoidification
with $\alpha = 1$, as described in \cite{BaezHoffnungWalker:2009HDA7}. 
Recall that a span of finite type
\[\xymatrix{
 & S\ar[dl]_q\ar[dr]^p & \\
 Y & & X \\
}\]
yields an operator 
\[ \utilde{S}\maps \R[\u{X}] \to \R[\u{Y}]  \]
given by:
\[
\utilde{S} [x] =
\sum_{[y] \in \u{Y}} \;\,
\sum_{[s]\in\u{p^{-1}(x)}\bigcap \u{q^{-1}(y)}}
\frac{|\Aut(y)|}{|\Aut(s)|} \,\, [y]   \, .
\]
We can rewrite this using groupoid cardinality as follows:
\[
\utilde{S} [x] =
\sum_{[y] \in \u{Y}} \;\,
|\Aut(y)| \,\, |(p \times q)^{-1}(x,y)| \,\, [y]   \, .
\]
Applying this procedure to the span with $\SES(Q)$ as its apex, we get
an operator
\[  m \maps \R[\u{X}] \tensor \R[\u{X}] \to \R[\u{X}] \]
with
\[ m([M] \tensor [N]) 
= \sum_{E\in\mathcal{P}^E_{MN}}
|\Aut(E)| \, |(p\times q)^{-1}(M,N,E)|  \, \, [E] .  \]
We wish to show this matches the Hall algebra product $[M] \cdot [N]$.

For this, we must make a few observations. First, we note that the
group $\Aut(N) \times \Aut(E) \times \Aut(M)$ acts on the set
$\mathcal{P}^E_{MN}$. This action is not necessarily free, but this is
just the sort of situation groupoid cardinality is designed to handle.
Taking the weak quotient, we obtain a groupoid equivalent to the
groupoid where objects are short exact sequences of the form $0\to
N\to E\to M\to 0$ and morphisms are isomorphisms of short exact
sequences.  So, the weak quotient is equivalent to the groupoid
$(p\times q)^{-1}(M,N,E)$.  Remembering that groupoid cardinality is
preserved under equivalence, we see:
\[
\begin{array}{rcl}
|(p\times q)^{-1}(M,N,E)| &=& 
|\mathcal{P}^E_{MN}/\!/(\Aut(N)\times\Aut(E)\times\Aut(M))| \\  \\
&=& 
\displaystyle{\frac{|\mathcal{P}^E_{MN}|}
{|\Aut(N)| \, |\Aut(E)| \, |\Aut(M)|} }
\end{array}
\]
So, we obtain
\[ m([M] \tensor [N]) 
= \sum_{E\in\mathcal{P}^E_{MN}}
\frac{|\mathcal{P}^E_{MN}|} {|\Aut(M)| \, |\Aut(N)|}  \,\, [E] .  \]
which is precisely the Hall algebra product $[M] \cdot [N]$.

Similarly, we construct the comultiplication span as the adjoint of the multiplication span:
\[
\xymatrix@!C{
& \SES(X)\ar[dl]_{p} \ar[dr]^{q} & \\
X\times X & & X\\
}
\]
Applying the same procedure to this, we get
an operator
\[  \Delta \maps \R[\u{X}] \to \R[\u{X}] \tensor \R[\u{X}] \]
with
\[ \Delta([E]) 
= \sum_{[M],[N]}
|\Aut(M)|\,|\Aut(N)| \, |(p\times q)^{-1}(M,N,E)|  \, \, [M]\otimes[N] .  \]
We wish to show this matches the Hall algebra comultiplication. Just like before, we make the substitution:
\[
\begin{array}{rcl}
|(p\times q)^{-1}(M,N,E)| &=& 
|\mathcal{P}^E_{MN}/\!/(\Aut(N)\times\Aut(E)\times\Aut(M))| \\  \\
&=& 
\displaystyle{\frac{|\mathcal{P}^E_{MN}|}
{|\Aut(N)| \, |\Aut(E)| \, |\Aut(M)|} }
\end{array}
\]
which simplifies this formula as follows:
\[ \Delta([M]) 
= \sum_{[M],[N]}
\frac{|\mathcal{P}^E_{MN}|} {|\Aut(E)|}  \,\, [M]\otimes [N] .  \]
This is precisely the comultiplication for the Hall algebra.

\section{Summary and Future Work}
It is prudent at this point to provide a complete list of the important results, how each is a categorification of a structure for the Hall algebra, and conjectures on how this will be extended in later work.

\begin{theorem}
Let $Q$ be a simply laced quiver, $\mathcal{A}=\Rep(Q)$ its category of finite dimensional representations over a finite field $\F_q$, and $\mathcal{A}_0$ the underlying groupoid of $\mathcal{A}$. Then the braided monoidal bicategory $\Span(\Gpd\downarrow \mathcal{A}_0)$ degroupoidifies into the braided monoidal category $\Vect^K$, where:
\begin{itemize}
\item A groupoid over $\mathcal{A}_0$ gives a $K$-graded vector space.
\item A span of groupoids over $\mathcal{A}_0$ gets sent to a map of $K$-graded vector spaces.
\item The tensor product of groupoids over $\mathcal{A}_0$ gets sent to the tensor product of $K$-graded vector spaces.
\item The associator and unitor in $\Span(\Gpd \downarrow \mathcal{A}_0)$ get sent to the associator and unitor in $\Vect^K$.
\item The braiding span in $\Span(\Gpd \downarrow \mathcal{A}_0)$ gets sent to the braiding we constructed in $\Vect^K$.
\end{itemize}
Also, using this process we can show:
\begin{itemize}
\item $\mathcal{A}_0$, viewed as a groupoid over itself, gives the Hall algebra with its `standard' $K$-grading.
\item the multiplication and comultiplication spans give the product and coproduct for the Hall algebra.
\end{itemize}
\end{theorem}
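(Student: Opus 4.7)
The plan is to verify each bullet in turn, leaning on the degroupoidification functor $\utilde{\;\;}\maps \Span \to \Vect$ from Chapter \ref{groupoidification} and checking that the extra $\mathcal{A}_0$-grading data is respected. First I would observe that a groupoid $v\maps \Psi \to \mathcal{A}_0$ gives not just a vector in $\R[\u{\mathcal{A}_0}]$, but in fact lets us partition $\Psi$ by isomorphism class in $\mathcal{A}_0$: setting $\Psi_{\u{a}} = v^{-1}(a)$, we obtain $\utilde{\Psi} = \sum_{[a]} \utilde{\Psi_{\u{a}}}$ with each summand lying in the $[a]$-graded piece. Passing to the Grothendieck group $K$ via the universal map $\u{\mathcal{A}_0} \to K$ (which is well-defined by the short-exact-sequence relation defining $K$), this gives $\R[\u{\mathcal{A}_0}]$ the structure of a $K$-graded vector space, and a groupoid over $\mathcal{A}_0$ maps to a $K$-graded vector. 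For a span of groupoids over $\mathcal{A}_0$, the weak pullback construction respects fibers over $\mathcal{A}_0$ because the natural isomorphism $\alpha$ filling the defining triangle forces the images in $\mathcal{A}_0$ to agree up to isomorphism; hence the associated linear operator preserves the $K$-grading.

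Next I would handle the monoidal and unital data. The tensor product of groupoids over $\mathcal{A}_0$ was defined as cartesian product followed by direct sum in $\mathcal{A}$, so on degroupoidified vectors this gives $(\Phi \otimes \Psi)_{\u{c}} = \bigoplus_{\u{a}+\u{b}=\u{c}} \Phi_{\u{a}} \otimes \Psi_{\u{b}}$, which is exactly the tensor product in $\Vect^K$. The associator and unitor are built from the associator/unitor in $\Gpd$ paired with those for $\oplus$ in $\mathcal{A}$, both of which degroupoidify to identity maps on the respective tensor products, leaving the associator and unitor in $\Vect^K$.

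For the braiding, the core input is Proposition \ref{BsimEXT} together with the cardinality computation $|\EXT(M,N)| = q^{-\langle \u{m},\u{n}\rangle}/(|\Aut(M)|\,|\Aut(N)|)$ from Section \ref{EXT}. Applying the matrix-entry formula \eqref{matrix_entry} to the braiding span $b_{X,Y}$ with apex $B_{X,Y}$, the entry going from $(x,y)$ to $(y,x)$ is $|\Aut(y)||\Aut(x)| \cdot |\EXT(\u{x},\u{y})| = q^{-\langle \u{x},\u{y}\rangle}$, which is exactly the braiding scalar on the graded piece indexed by $(\u{x},\u{y})$ in Theorem \ref{BMC}. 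Hence the braiding spans degroupoidify to the braiding in $\Vect^K$.

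Finally, to recover the Hall algebra itself, I take $\Psi = \mathcal{A}_0$ viewed as a groupoid over itself by the identity functor. Then $\utilde{\Psi}([M]) = |\Aut(M)| \cdot |v^{-1}(M)|$, and since $v^{-1}(M)$ is equivalent to the one-object groupoid $B\Aut(M)$ of cardinality $1/|\Aut(M)|$, we get $\utilde{\mathcal{A}_0}([M]) = 1$ on each basis element, recovering $\R[\u{X}] = \Hall$ with its natural $K$-grading. For the multiplication and comultiplication spans, the calculation was essentially done at the end of the chapter: the matrix entries of the span with apex $\SES(Q)$ reduce, via the weak quotient identity $|(p\times q)^{-1}(M,N,E)| = P^E_{MN}/(|\Aut(M)||\Aut(N)||\Aut(E)|)$, to exactly the Hall algebra structure coefficients. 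The same span read in reverse yields the comultiplication coefficients $P^E_{MN}/\aut(E)$. The main subtlety — and the one place where the reader should check carefully — is that all these spans are genuinely of finite type in the sense of Definition \ref{finitetypespan} when $Q$ is simply-laced Dynkin, since then $\Rep(Q)$ has finitely many isomorphism classes of indecomposables and the sums appearing in $\utilde{\Psi}$ for the relevant $\Psi$ are finite; this is what guarantees each span actually descends to a well-defined linear operator rather than a formal sum.
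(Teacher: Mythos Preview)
Your proposal has a genuine conceptual gap in how you handle the first bullet, and it propagates through the argument. In the degroupoidification functor of Chapter~\ref{groupoidification}, an \emph{object} of $\Span$ (a groupoid $X$) is sent to the \emph{vector space} $\R[\u{X}]$; it is only a groupoid \emph{over} $X$ that produces a \emph{vector} $\utilde{\Psi}\in\R[\u{X}]$. In the bicategory $\Span(\Gpd\downarrow\mathcal{A}_0)$ the objects are groupoids equipped with a structure map $f\maps X\to\mathcal{A}_0$, and the theorem asserts that such an object degroupoidifies to the $K$-graded vector space $\R[\u{X}]$, with grading coming from $\u{X}\to\u{\mathcal{A}_0}\to K$. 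You instead treat the object as though it were a vector in $\R[\u{\mathcal{A}_0}]$, computing $\utilde{\Psi}=\sum_{[a]}\utilde{\Psi_{\u a}}$ and calling this a ``$K$-graded vector.'' These are different constructions: for a general $X$ over $\mathcal{A}_0$ the target vector space is $\R[\u{X}]$, not $\R[\u{\mathcal{A}_0}]$. The same confusion recurs in your final paragraph, where computing $\utilde{\mathcal{A}_0}([M])=1$ exhibits a particular vector rather than recovering the vector space $\R[\u{\mathcal{A}_0}]=\Hall$; the correct observation is simply that $\mathcal{A}_0$ as a groupoid degroupoidifies to $\R[\u{\mathcal{A}_0}]$, and the identity structure map gives the standard grading.

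This also affects your braiding computation. The matrix entry you want is for the linear map $\R[\u{X\times Y}]\to\R[\u{Y\times X}]$, so the prefactor is $|\Aut_{Y\times X}(y,x)|=|\Aut_Y(y)|\,|\Aut_X(x)|$, whereas the denominator in $|\EXT(\u x,\u y)|$ involves $|\Aut_{\mathcal{A}}(\u x)|\,|\Aut_{\mathcal{A}}(\u y)|$. These agree for $X=Y=\mathcal{A}_0$ but not for arbitrary groupoids over $\mathcal{A}_0$; you have silently identified them. The paper does not give a standalone proof of this summary theorem, but its treatment of the braiding (via Proposition~\ref{BsimEXT} and the EXT cardinality lemma) and of the multiplication and comultiplication spans (the explicit computation in Chapter~\ref{H2A}) is what you are appealing to, and those parts of your argument are otherwise sound. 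The fix is to correctly identify what degroupoidification does to objects versus elements, and then to track the automorphism groups carefully in the general braiding case.
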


The results of the second half of this theorem give us a clue that we could extend these structures to ones which will degroupoidify to the Hopf algebra structure for the Hall algebra. Specifically, we make the following conjecture for continuing this work.

\begin{conjecture}
The groupoid $\mathcal{A}_0$ viewed as a groupoid over itself, along with the multiplication and comultiplication spans described in this chapter, can be extended to a Hopf $2$-algebra in the braided monoidal bicategory $\Span(\Gpd\downarrow \mathcal{A}_0)$.
\end{conjecture}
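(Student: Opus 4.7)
The plan is to complete the Hopf $2$-algebra structure in four stages, building on the multiplication and comultiplication spans already constructed. First, I would define the unit and counit spans. The unit should be a span
\[\xymatrix@!C{ & 1 \ar[dl] \ar[dr] & \\ \mathcal{A}_0 & & 1}\]
whose apex is the terminal groupoid and whose left leg picks out the zero representation in $\mathcal{A}_0$ (with $1$ mapping to $0 \in \mathcal{A}_0$ on the monoidal unit side). The counit would be the adjoint span. Degroupoidifying these should give the standard Hall algebra unit $[0]$ and the projection onto the zero-graded component, as one checks directly from Equation (\ref{operator_formula}).

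Second, I would construct the antipode span $S \maps \mathcal{A}_0 \to \mathcal{A}_0$. This is the most delicate piece, because the Hall algebra antipode $S([M]) = -[M]$ carries a minus sign, whereas groupoid cardinalities are positive. My proposed workaround is to interpret the antipode as a span into the category of \emph{virtual} (formal difference) groupoids over $\mathcal{A}_0$, or equivalently to work with a $\Z/2$-graded enhancement where the apex is $\mathcal{A}_0$ itself with both legs the identity, but carrying a parity sign. An alternative, following the spirit of the Khovanov-Lauda program, would be to replace the antipode by a pair of spans (positive and negative parts) satisfying the antipode axioms up to coherent isomorphism.

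Third, I would build the invertible modifications that witness the bialgebra compatibility. The key input here is Green's formula (Proposition \ref{GF}), which we have already categorified in spirit via the groupoid $\EXT(M,N)$ and the bilinearity result in Proposition \ref{EXTbil}. The modification comparing $\Delta \circ m$ with $(m \otimes m) \circ (1 \otimes B \otimes 1) \circ (\Delta \otimes \Delta)$ should be constructed from a span whose apex parametrises pairs of short exact sequences sharing a common middle term, with the weak pullback with the braiding span $B_{X,Y}$ producing precisely the coefficient $q^{-\langle A,D\rangle}$ upon degroupoidification. Similar modifications for associativity/coassociativity compatibility and unit/counit compatibility arise from canonical isomorphisms of exact sequences (splitting and regluing).

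The hard part will be verifying the coherence laws of whichever notion of Hopf $2$-algebra one adopts (Neuchl, Pfeiffer, or Fr\'egier-Wagemann), which amounts to checking that each of the polytope diagrams commutes in the bicategory of spans. Just as in the proof of the braided monoidal bicategory structure in Chapter \ref{GHA}, each face of each polytope corresponds to a natural isomorphism of iterated extensions (of the form $(E/A)/B \cong E/(A \oplus B)$ and its variants), and commutativity reduces to a homological-algebra identity in $\Rep(Q)$. I expect the bialgebra compatibility modification to be the main computational obstacle, since the interaction between multiplication, comultiplication, and the nontrivial braiding will generate the largest polytope. The strategy would be to isolate the minimal set of homological identities in $\Rep(Q)$ from which every required coherence cell follows, reducing the verification to a finite check analogous to Green's formula at the $2$-categorical level.
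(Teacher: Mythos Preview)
The statement you are attempting to prove is explicitly labeled a \emph{conjecture} in the paper, and the paper provides no proof of it. Indeed, the surrounding text states that defining the unit, counit, and the coherence isomorphisms required by the Neuchl or Pfeiffer definition of Hopf $2$-algebra is deferred to ``later work''. So there is nothing in the paper to compare your proposal against; what you have written is a research plan for an open problem, not a proof attempt to be checked against an existing argument.

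That said, your outline is broadly sensible and in fact more detailed than anything the paper commits to. Your proposed unit and counit spans are the natural candidates, and your strategy for the bialgebra compatibility modification---building it from a span over pairs of short exact sequences with common middle term, so that Green's formula is recovered upon degroupoidification---is exactly the kind of construction one would expect. The reduction of coherence polytopes to homological identities in $\Rep(Q)$ mirrors the method used in Chapter~\ref{GHA}, and is the right template.

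The one place where your plan identifies a genuine obstruction, rather than just work to be done, is the antipode. You correctly observe that $S([M]) = -[M]$ cannot be realized by an ordinary span of groupoids, since degroupoidification produces only nonnegative matrix entries. Your proposed workarounds (virtual groupoids, a $\Z/2$-graded sign, or a pair of spans) are all plausible directions, but each changes the ambient bicategory: you would no longer be working in $\Span(\Gpd\downarrow\mathcal{A}_0)$ as defined, but in some signed or super enhancement of it. That is not a flaw in your reasoning---it may well be necessary---but it means the conjecture as literally stated (a Hopf $2$-algebra \emph{in} $\Span(\Gpd\downarrow\mathcal{A}_0)$) may require either a modified bicategory or a weakened notion of antipode. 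This point is not addressed in the paper.
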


\appendix
\chapter{Definitions}
In this section we give various definitions used throughout the paper. The specific definitions given and the associated diagrams where provide by Mike Stay.

\begin{defn}\label{bicatDEF}
  A {\bf bicategory} is a horizontal categorification of a monoidal category---that is, any one-object bicategory is a monoidal category---so much of the definition is similar.  A bicategory $\CC$ consists of
  \begin{enumerate}
    \item a collection of {\bf objects}
    \item for each pair of objects $A, B$ in $\CC$, we have a category $\CC(A,B)$; the objects of $\CC(A,B)$ are called {\bf 1-morphisms}, while the morphisms of $\CC(A,B)$ are called {\bf 2-morphisms}.
    \item for each triple of objects $A, B, C$ in $\CC$, a {\bf composition} functor 
        \[ \circ_{A,B,C}\maps \CC(B, C) \times \CC(A, B) \to \CC(A, C).\]
      We will leave off the subscript, since it should be clear from the context.
    \item for each object $A$ in $\CC$, an object $1_A$ in $\CC(A,A)$ called the {\bf identity 1-morphism on $A$}.  We will often write this simply as $A$.
    \item for each quadruple of objects $A, B, C, D$, a natural isomorphism called the {\bf associator for composition}; if $(f,g,h)$ is an object of $\CC(C,D) \times \CC(B,C) \times \CC(A,B),$ then 
      $$a_{A,B,C,D}(f,g,h) \maps (f \circ g) \circ h \to f \circ (g \circ h).$$
    \item for each pair of objects $A, B$ in $\CC$, natural isomorphisms called {\bf left and right unitors for composition.}  If $f$ is an object of $\CC(A,B)$, then
      \[\begin{array}{l}
        l_{A,B}(f)\maps B \circ f \stackrel{\sim}{\to} f\\
        r_{A,B}(f)\maps f \circ A \stackrel{\sim}{\to} f
      \end{array}\]
  \end{enumerate}
  such that $a, l,$ and $r$ satisfy the following coherence laws:
  \begin{enumerate}
    \item for all $(f,g,h,j)$ in $\CC(D,E) \times \CC(C,D) \times \CC(B,C) \times \CC(A,B),$ the following diagram, called the {\bf pentagon equation}, commutes:
      \[\begin{diagram}
        \node[2]{((f \circ g) \circ h) \circ j} \arrow{sse,t}{a_{f\circ g, h, j}} \arrow{sw,t}{a_{f,g,h} \circ j} \\
        \node{(f \circ (g \circ h)) \circ j} \arrow[2]{s,l}{a_{f, g\circ h, j}}\\
        \node[3]{(f \circ g) \circ (h \circ j)} \arrow{ssw,b}{a_{f,g,h \circ j}}\\
        \node{f \circ ((g \circ h) \circ j)} \arrow{se,b}{f \circ a_{g,h,j}}\\
        \node[2]{f \circ (g \circ (h \circ j))}
      \end{diagram}\]
    \item for all $(f,g)$ in $\CC(B,C)\times \CC(A,B)$ the following diagram, called the {\bf triangle equation}, commutes:
      \[\begin{diagram}
        \node{(f \circ B) \circ g} \arrow[2]{e,t}{a} \arrow{se,b}{r(f) \circ g} \node[2]{f \circ (B \circ g)} \arrow{sw,b}{f \circ l(g)} \\
        \node[2]{f \circ g}
      \end{diagram}\]
  \end{enumerate}
\end{defn}

The associator $a$ and unitors $r, l$ for composition of 1-morphisms are necessary, but when we are drawing commutative diagrams of 1-morphisms they are very hard to show; fortunately any consistent choice is equivalent to any other, so we leave them out.
\newpage
\begin{defn}\label{monoidalDEF} A {\bf monoidal} bicategory $\CC$ consists of the following data subject to the following axioms. \\
DATA:
\begin{itemize}
\item A bicategory $\CC$.
\item A {\bf tensor product} functor $\otimes:\CC \times \CC \rightarrow \CC$.
\item A {\bf monoidal unit} object $I$.
\item An {\bf associator} pseudonatural isomorphism
\[ a\maps (A\tensor B) \tensor C \Rightarrow A \tensor (B \tensor C) \]
for moving parentheses around among three tensored objects.  Here and below we are using expressions like $(A \tensor B) \tensor C$ to denote a functor like $\tensor \circ (\tensor \times \CC)\maps \CC^3 \to \CC.$
\item {\bf Unitor} pseudonatural isomorphisms
\[ l\maps I \tensor A \Rightarrow A \]
\[ r\maps A \tensor I \Rightarrow A \]
for the interaction of one object with $I$;
\item A {\bf pentagonator} isomorphism 2-cell $\pi$ (i.e., an invertible modification) for moving parentheses among four objects.
\begin{center}
  \begin{tikzpicture}
%    \filldraw[white,fill=red,fill opacity=0.1](0,3)--(2,4)--(4,2)--(2,0)--(0,1)--cycle;
    \node (ABCD1) at (0,3) {$((A \tensor B) \tensor C) \tensor D$};
    \node (ABCD2) at (2,4) {$(A\tensor B)(C\tensor D)$}
      edge [<-] node [l, above left] {$a$} (ABCD1);
    \node (ABCD3) at (4,2) {$A \tensor (B \tensor (C \tensor D))$}
      edge [<-] node [l, above right] {$a$} (ABCD2);
    \node (ABCD4) at (2,0) {$A \tensor ((B \tensor C) \tensor D)$}
      edge [->] node [l, below right] {$A \tensor a$} (ABCD3);
    \node (ABCD5) at (0,1) {$(A \tensor (B \tensor C)) \tensor D$}
      edge [->] node [l, below left] {$a$} (ABCD4)
      edge [<-] node [l, left] {$D \tensor a$} (ABCD1);
    \node at (1,2) {\tikz\node [rotate=90] {$\Rightarrow$};};
    \node at (1.5,2) {$\pi$};
  \end{tikzpicture}
\end{center}
\newpage
\item Invertible modifications $\lambda, \mu,$ and $\rho$ for the interaction of two objects with $I$.
\begin{center}
  \begin{tikzpicture}
%    \filldraw[white,fill=yellow,fill opacity=0.1](0,0)--(3,0)--(3,3)--(0,3)--cycle;
    \node (AB1) at (0,0) {$A \tensor B$};
    \node (AB2) at (3,0) {$A \tensor B$}
      edge [<-] node [l, below] {$A\tensor B$} (AB1);
    \node (AIB2) at (3,3) {$A\tensor (I\tensor B)$}
      edge [->] node [l, right] {$A \tensor l$} (AB2);
    \node (AIB1) at (0,3) {$(A\tensor I) \tensor B$}
      edge [->] node [l, above] {$a$} (AIB2)
      edge [<-] node [l, left] {$r^* \tensor B$} (AB1);
    \node at (1,1.5) {\tikz\node [rotate=-90] {$\Rightarrow$};};
    \node at (1.5,1.5) {$\mu$};
  \end{tikzpicture}
\end{center}
\begin{center}
  \begin{tikzpicture}
%    \filldraw[white,fill=yellow,fill opacity=0.1](0,2)--(3,1)--(0,0)--cycle;
    \node (IAB1) at (0,2) {$(I\tensor A)\tensor B$};
    \node (IAB2) at (3,1) {$I\tensor (A \tensor B)$}
      edge [<-] node [l, above right] {$a$} (IAB1);
    \node (AB) at (0,0) {$A \tensor B$}
      edge [<-] node [l, below right] {$l$} (IAB2)
      edge [<-] node [l, left] {$l \tensor B$} (IAB1);
    \node at (1,1) {$\Rightarrow \lambda$};
  \end{tikzpicture}
\end{center}
\begin{center}
  \begin{tikzpicture}
%    \filldraw[white,fill=yellow,fill opacity=0.1](0,2)--(3,1)--(0,0)--cycle;
    \node (ABI1) at (0,2) {$A \tensor (B \tensor I)$};
    \node (ABI2) at (3,1) {$(A \tensor B) \tensor I$}
      edge [<-] node [l, above right] {$a^*$} (ABI1);
    \node (AB) at (0,0) {$A \tensor B$}
      edge [<-] node [l, below right] {$r$} (ABI2)
      edge [<-] node [l, left] {$A \tensor r$} (ABI1);
    \node at (1,1) {$\Rightarrow \rho$};
  \end{tikzpicture}
\end{center}
\end{itemize}
AXIOMS:
\begin{itemize}
  \item The following equation of 2-morphisms holds in the bicategory $\CC$, where we have used parentheses instead of $\tensor$ for compactness and the unmarked isomorphisms are naturality isomorphisms for the associator.  The equation governs moving parentheses around among five objects. 

    \begin{center}
%      \begin{tikzpicture}[line join=round]
%        \begin{scope}[scale=.2]
%          \filldraw[fill=red,fill opacity=0.7](-4.306,-3.532)--(-2.391,-.901)--(-2.391,3.949)--(-5.127,.19)--(-5.127,-2.581)--cycle;
%          \filldraw[fill=red,fill opacity=0.7](-4.306,-3.532)--(-2.391,-.901)--(2.872,-1.858)--(4.306,-3.396)--(3.212,-4.9)--cycle;
%          \filldraw[fill=red,fill opacity=0.7](2.872,-1.858)--(2.872,5.07)--(-.135,5.617)--(-2.391,3.949)--(-2.391,-.901)--cycle;
%          \filldraw[fill=green,fill opacity=0.7](4.306,-3.396)--(4.306,3.532)--(2.872,5.07)--(2.872,-1.858)--cycle;
%          \filldraw[fill=green,fill opacity=0.7](-2.872,1.858)--(-.135,5.617)--(-2.391,3.949)--(-5.127,.19)--cycle;
%          \filldraw[fill=red,fill opacity=0.7](3.212,-4.9)--(4.306,-3.396)--(4.306,3.532)--(2.391,.901)--(2.391,-3.949)--cycle;
%          \filldraw[fill=green,fill opacity=0.7](-4.306,-3.532)--(3.212,-4.9)--(2.391,-3.949)--(-5.127,-2.581)--cycle;
%          \filldraw[fill=red,fill opacity=0.7](-2.872,1.858)--(2.391,.901)--(4.306,3.532)--(2.872,5.07)--(-.135,5.617)--cycle;
%          \filldraw[fill=red,fill opacity=0.7](-5.127,-2.581)--(-5.127,.19)--(-2.872,1.858)--(2.391,.901)--(2.391,-3.949)--cycle;
%        \end{scope}
%      \end{tikzpicture}% End sketch output
%      \\
      \begin{tikzpicture}[line join=round,scale=.7]
%        \filldraw[white,fill=red,fill opacity=0.1](-4.306,-3.532)--(-2.391,-.901)--(-2.391,3.949)--(-5.127,.19)--(-5.127,-2.581)--cycle;
%        \filldraw[white,fill=red,fill opacity=0.1](-4.306,-3.532)--(-2.391,-.901)--(2.872,-1.858)--(4.306,-3.396)--(3.212,-4.9)--cycle;
%        \filldraw[white,fill=red,fill opacity=0.1](2.872,-1.858)--(2.872,5.07)--(-.135,5.617)--(-2.391,3.949)--(-2.391,-.901)--cycle;
%        \filldraw[white,fill=green,fill opacity=0.1](4.306,-3.396)--(4.306,3.532)--(2.872,5.07)--(2.872,-1.858)--cycle;
        \begin{scope}[font=\fontsize{8}{8}\selectfont]
          \node (A) at (-2.391,3.949) {$(A(B(CD)))E$};
          \node (B) at (-5.127,.19) {$(A((BC)D))E$}
            edge [->] node [l, above left] {$(Aa)E$} (A);
          \node (C) at (-5.127,-2.581) {$((A(BC))D)E$}
            edge [->] node [l, left] {$aE$} (B);
          \node (D) at (-4.306,-3.532) {$(((AB)C)D)E$}
            edge [->] node [l, left] {$(aD)E$} (C);
          \node (E) at (3.212,-4.9) {$((AB)C)(DE)$}
            edge [<-] node [l, below] {$a$} (D);
          \node (F) at (4.306,-3.396) {$(AB)(C(DE))$}
            edge [<-] node [l, below right] {$a$} (E);
          \node (G) at (4.306,3.532) {$A(B(C(DE)))$}
            edge [<-] node [l, right] {$a$} (F);
          \node (H) at (2.872,5.07) {$A(B((CD)E))$}
            edge [->] node [l, above right] {$A(Ba)$} (G);
          \node (I) at (-.135,5.617) {$A((B(CD))E)$}
            edge [->] node [l, above] {$Aa$} (H)
            edge [<-] node [l, above left] {$a$} (A);
          \node (M) at (-2.391,-.901) {$((AB)(CD))E$}
            edge [<-] node [l, right] {$aE$} (D)
            edge [->] node [l, right] {$aE$} (A);
          \node (N) at (2.872,-1.858) {$(AB((CD)E))$}
            edge [<-] node [l, above] {$a$} (M)
            edge [->] node [l, left] {$a$} (H)
            edge [->] node [l, left] {$(AB)a$} (F);
          \node at (-4,-.5) {$\Rightarrow \pi$};
          \node at (0,-3) {\tikz\node [rotate=-90] {$\Rightarrow$};};
          \node at (0.5,-3) {$\pi$};
          \node at (0,2) {\tikz\node [rotate=-45] {$\Rightarrow$};};
          \node at (0.5,2) {$\pi$};
          \node at (3.5,1) {$\cong$};
        \end{scope}
      \end{tikzpicture}% End sketch output
      \\
      =
      \\
      \begin{tikzpicture}[line join=round,scale=.7]
%        \filldraw[white,fill=green,fill opacity=0.1](-2.872,1.858)--(-.135,5.617)--(-2.391,3.949)--(-5.127,.19)--cycle;
%        \filldraw[white,fill=red,fill opacity=0.1](3.212,-4.9)--(4.306,-3.396)--(4.306,3.532)--(2.391,.901)--(2.391,-3.949)--cycle;
%        \filldraw[white,fill=green,fill opacity=0.1](-4.306,-3.532)--(3.212,-4.9)--(2.391,-3.949)--(-5.127,-2.581)--cycle;
%        \filldraw[white,fill=red,fill opacity=0.1](-2.872,1.858)--(2.391,.901)--(4.306,3.532)--(2.872,5.07)--(-.135,5.617)--cycle;
%        \filldraw[white,fill=red,fill opacity=0.1](-5.127,-2.581)--(-5.127,.19)--(-2.872,1.858)--(2.391,.901)--(2.391,-3.949)--cycle;
        \begin{scope}[font=\fontsize{8}{8}\selectfont]
          \node (A) at (-2.391,3.949) {$(A(B(CD)))E$};
          \node (B) at (-5.127,.19) {$(A((BC)D))E$}
            edge [->] node [l, above left] {$(Aa)E$} (A);
          \node (C) at (-5.127,-2.581) {$((A(BC))D)E$}
            edge [->] node [l, left] {$aE$} (B);
          \node (D) at (-4.306,-3.532) {$(((AB)C)D)E$}
            edge [->] node [l, left] {$(aD)E$} (C);
          \node (E) at (3.212,-4.9) {$((AB)C)(DE)$}
            edge [<-] node [l, below] {$a$} (D);
          \node (F) at (4.306,-3.396) {$(AB)(C(DE))$}
            edge [<-] node [l, below right] {$a$} (E);
          \node (G) at (4.306,3.532) {$A(B(C(DE)))$}
            edge [<-] node [l, right] {$a$} (F);
          \node (H) at (2.872,5.07) {$A(B((CD)E))$}
            edge [->] node [l, above right] {$A(Ba)$} (G);
          \node (I) at (-.135,5.617) {$A((B(CD))E)$}
            edge [->] node [l, above] {$Aa$} (H)
            edge [<-] node [l, above left] {$a$} (A);
          \node (J) at (-2.872,1.858) {$A(((BC)D)E)$}
            edge [->] node [l, below right] {$A(aE)$} (I)
            edge [<-] node [l, below right] {$a$} (B);
          \node (K) at (2.391,-3.949) {$(A(BC))(DE)$}
            edge [<-] node [l, left] {$a(DE)$} (E)
            edge [<-] node [l, above] {$a$} (C);
          \node (L) at (2.391,.901) {$A((BC)(DE))$}
            edge [<-] node [l, left] {$a$} (K)
            edge [<-] node [l, above] {$Aa$} (J)
            edge [->] node [l, above left] {$Aa$} (G);
          \node at (-1,-1) {\tikz\node [rotate=-45] {$\Rightarrow$};};
          \node at (-.5,-1) {$\pi$};
          \node at (1,3) {\tikz\node [rotate=-45] {$\Rightarrow$};};
          \node at (1.5,3) {$\pi$};
          \node at (3,-1.5) {\tikz\node [rotate=-45] {$\Rightarrow$};};
          \node at (3.5,-1.5) {$\pi$};
          \node at (-1,-3.7) {$\cong$};
          \node at (-2.5,3) {$\cong$};
        \end{scope}
      \end{tikzpicture}% End sketch output
    \end{center}
\newpage
  \item Two equations of 2-morphisms in $\CC$ for the interaction of $I$ with three other objects; the only cases not covered by the interaction with one or two other objects is where $I$ appears just to the left or right of the middle object.  Note that $I$ is an object, the unitors governing $I$ and one object are morphisms, the modifications governing $I$ and two objects are 2-morphisms, and this is an equation.  The unmarked isomorphisms are either naturality isomorphisms for the associator or unique coherence isomorphisms from $\CC$.  Each equation is a cube with one edge ``half-truncated'':
%    \begin{center}
%      \begin{tikzpicture}[line join=round]
%        \begin{scope}[scale=.4]
%          \filldraw[fill=yellow,fill opacity=0.7](3.522,2.529)--(3.522,-2.529)--(.439,-1.788)--(.439,.894)--cycle;
%          \filldraw[fill=red,fill opacity=0.7](-3.133,-2.249)--(-3.133,2.249)--(-.309,1.885)--(.439,.894)--(.439,-1.788)--cycle;
%          \filldraw[fill=green,fill opacity=0.7](-.875,-3.562)--(-3.133,-2.249)--(.439,-1.788)--(3.522,-2.529)--cycle;
%          \filldraw[fill=yellow,fill opacity=0.7](3.522,2.529)--(.439,.894)--(-.309,1.885)--cycle;
%          \filldraw[fill=green,fill opacity=0.7](-3.133,2.249)--(-.875,3.562)--(3.522,2.529)--(-.309,1.885)--cycle;
%          \filldraw[fill=yellow,fill opacity=0.7](-.875,-3.562)--(-3.133,-2.249)--(-3.133,2.249)--(-.875,3.562)--cycle;
%          \filldraw[fill=green,fill opacity=0.7](-.875,-3.562)--(-.875,3.562)--(3.522,2.529)--(3.522,-2.529)--cycle;
%        \end{scope}
%      \end{tikzpicture}% End sketch output
%    \end{center}
    \begin{center}
      \begin{tikzpicture}[line join=round,scale=.7]
        \begin{scope}[font=\fontsize{8}{8}\selectfont]
%          \filldraw[white,fill=yellow,fill opacity=0.1](-.875,-3.562)--(-3.133,-2.249)--(-3.133,2.249)--(-.875,3.562)--cycle;
%          \filldraw[white,fill=green,fill opacity=0.1](-.875,-3.562)--(-.875,3.562)--(3.522,2.529)--(3.522,-2.529)--cycle;
          \node (A) at (-.875,-3.562) {$(hg)f$};
          \node (B) at (-3.133,-2.249) {$((hI)g)f$}
            edge [<-] node [l, below left] {$(r^*g)f$} (A);
          \node (C) at (-3.133,2.249) {$(h(Ig))f$}
            edge [<-] node [l, left] {$af$} (B);
          \node (D) at (-.875,3.562) {$(hg)f$}
            edge [<-] node [l, above left] {$(hl)f$} (C)
            edge [<-] node [l, right] {$(hg)f$} (A);
          \node (E) at (3.522,2.529) {$h(gf)$}
            edge [<-] node [l, above] {$a$} (D);
          \node (F) at (3.522,-2.529) {$h(gf)$}
            edge [->] node [l, right] {$h(gf)$} (E)
            edge [<-] node [l, below] {$a$} (A);
          \node at (-2,0) {$\Rightarrow \mu f$};
          \node at (1.5,0) {$\cong$};
        \end{scope}
        \node at (5,0) {=};
      \end{tikzpicture}% End sketch output
      \begin{tikzpicture}[line join=round,scale=.7]
        \begin{scope}[font=\fontsize{8}{8}\selectfont]
%          \filldraw[white,fill=yellow,fill opacity=0.1](3.522,2.529)--(3.522,-2.529)--(.439,-1.788)--(.439,.894)--cycle;
%          \filldraw[white,fill=red,fill opacity=0.1](-3.133,-2.249)--(-3.133,2.249)--(-.309,1.885)--(.439,.894)--(.439,-1.788)--cycle;
%          \filldraw[white,fill=green,fill opacity=0.1](-.875,-3.562)--(-3.133,-2.249)--(.439,-1.788)--(3.522,-2.529)--cycle;
%          \filldraw[white,fill=yellow,fill opacity=0.1](3.522,2.529)--(.439,.894)--(-.309,1.885)--cycle;
%          \filldraw[white,fill=green,fill opacity=0.1](-3.133,2.249)--(-.875,3.562)--(3.522,2.529)--(-.309,1.885)--cycle;
          \node (A) at (-.875,-3.562) {$(hg)f$};
          \node (B) at (-3.133,-2.249) {$((hI)g)f$}
            edge [<-] node [l, below left] {$(r^*g)f$} (A);
          \node (C) at (-3.133,2.249) {$(h(Ig))f$}
            edge [<-] node [l, left] {$af$} (B);
          \node (D) at (-.875,3.562) {$(hg)f$}
            edge [<-] node [l, above left] {$(hl)f$} (C);
          \node (E) at (3.522,2.529) {$h(gf)$}
            edge [<-] node [l, above] {$a$} (D);
          \node (F) at (3.522,-2.529) {$h(gf)$}
            edge [->] node [l, right] {$h(gf)$} (E)
            edge [<-] node [l, below] {$a$} (A);
          \node (G) at (.439,-1.788) {$(hI)(gf)$}
            edge [<-] node [l, above] {$r^*(gf)$} (F)
            edge [<-] node [l, above] {$a$} (B);
          \node (H) at (.439,.894) {$h(I(gf))$}
            edge [<-] node [l, left] {$a$} (G)
            edge [->] node [l, below right] {$hl$} (E);
          \node (I) at (-.309,1.885) {$h((Ig)f)$}
            edge [->] node [l, left] {$ha$} (H)
            edge [->] node [l, above left] {$h(lf)$} (E)
            edge [<-] node [l, below] {$a$} (C);
          \node at (-1.5,0) {\tikz\node [rotate=-45] {$\Rightarrow$};};
          \node at (-1,0) {$\pi$};
          \node at (2,0) {$\Rightarrow \mu$};
          \node at (-.5,2.75) {$\cong$};
          \node at (-.5,-2.75) {$\cong$};
          \node at (.75,1.7) {\tikz\node [rotate=-90] {$\Rightarrow$};};
          \node at (1.25,1.7) {$h\lambda$};
        \end{scope}
      \end{tikzpicture}% End sketch output
    \end{center}
%    \begin{center}
%      \begin{tikzpicture}[line join=round]
%        \begin{scope}[scale=.4]
%          \filldraw[fill=yellow,fill opacity=0.7](-3.522,2.529)--(-3.522,-2.529)--(-.439,-1.788)--(-.439,.894)--cycle;
%          \filldraw[fill=red,fill opacity=0.7](3.133,-2.249)--(3.133,2.249)--(.309,1.885)--(-.439,.894)--(-.439,-1.788)--cycle;
%          \filldraw[fill=green,fill opacity=0.7](.875,-3.562)--(3.133,-2.249)--(-.439,-1.788)--(-3.522,-2.529)--cycle;
%          \filldraw[fill=yellow,fill opacity=0.7](-3.522,2.529)--(-.439,.894)--(.309,1.885)--cycle;
%          \filldraw[fill=green,fill opacity=0.7](3.133,2.249)--(.875,3.562)--(-3.522,2.529)--(.309,1.885)--cycle;
%          \filldraw[fill=yellow,fill opacity=0.7](.875,-3.562)--(3.133,-2.249)--(3.133,2.249)--(.875,3.562)--cycle;
%          \filldraw[fill=green,fill opacity=0.7](.875,-3.562)--(.875,3.562)--(-3.522,2.529)--(-3.522,-2.529)--cycle;
%        \end{scope}
%      \end{tikzpicture}% End sketch output
%    \end{center}
    \begin{center}
      \begin{tikzpicture}[line join=round,scale=.7]
        \begin{scope}[font=\fontsize{8}{8}\selectfont]
%          \filldraw[white,fill=yellow,fill opacity=0.1](.875,-3.562)--(3.133,-2.249)--(3.133,2.249)--(.875,3.562)--cycle;
%          \filldraw[white,fill=green,fill opacity=0.1](.875,-3.562)--(.875,3.562)--(-3.522,2.529)--(-3.522,-2.529)--cycle;
          \node (A) at (.875,-3.562) {$h(gf)$};
          \node (B) at (3.133,-2.249) {$h(g(If))$}
            edge [->] node [l, below right] {$h(gl)$} (A);
          \node (C) at (3.133,2.249) {$h((gI)f)$}
            edge [->] node [l, right] {$ha$} (B);
          \node (D) at (.875,3.562) {$h(gf)$}
            edge [->] node [l, above right] {$h(r^*f)$} (C)
            edge [->] node [l, left] {$h(gf)$} (A);
          \node (E) at (-3.522,2.529) {$(hg)f$}
            edge [->] node [l, above] {$a$} (D);
          \node (F) at (-3.522,-2.529) {$(hg)f$}
            edge [<-] node [l, left] {$(hg)f$} (E)
            edge [->] node [l, below] {$a$} (A);
          \node at (2,0) {$\Leftarrow \mu f$};
          \node at (-1.5,0) {$\cong$};
        \end{scope}
        \node at (5,0) {=};
      \end{tikzpicture}% End sketch output
      \begin{tikzpicture}[line join=round,scale=.7]
        \begin{scope}[font=\fontsize{8}{8}\selectfont]
%          \filldraw[white,fill=yellow,fill opacity=0.1](-3.522,2.529)--(-3.522,-2.529)--(-.439,-1.788)--(-.439,.894)--cycle;
%          \filldraw[white,fill=red,fill opacity=0.1](3.133,-2.249)--(3.133,2.249)--(.309,1.885)--(-.439,.894)--(-.439,-1.788)--cycle;
%          \filldraw[white,fill=green,fill opacity=0.1](.875,-3.562)--(3.133,-2.249)--(-.439,-1.788)--(-3.522,-2.529)--cycle;
%          \filldraw[white,fill=yellow,fill opacity=0.1](-3.522,2.529)--(-.439,.894)--(.309,1.885)--cycle;
%          \filldraw[white,fill=green,fill opacity=0.1](3.133,2.249)--(.875,3.562)--(-3.522,2.529)--(.309,1.885)--cycle;
          \node (A) at (.875,-3.562) {$h(gf)$};
          \node (B) at (3.133,-2.249) {$h(g(If))$}
            edge [->] node [l, below right] {$h(gl)$} (A);
          \node (C) at (3.133,2.249) {$h((gI)f)$}
            edge [->] node [l, right] {$ha$} (B);
          \node (D) at (.875,3.562) {$h(gf)$}
            edge [->] node [l, above right] {$h(r^*f)$} (C);
          \node (E) at (-3.522,2.529) {$(hg)f$}
            edge [->] node [l, above] {$a$} (D);
          \node (F) at (-3.522,-2.529) {$(hg)f$}
            edge [<-] node [l, left] {$(hg)f$} (E)
            edge [->] node [l, below] {$a$} (A);
          \node (G) at (-.439,-1.788) {$(hg)(If)$}
            edge [->] node [l, above] {$(hg)l$} (F)
            edge [->] node [l, above] {$a$} (B);
          \node (H) at (-.439,.894) {$((hg)I)f$}
            edge [->] node [l, right] {$a$} (G)
            edge [<-] node [l, below left] {$r^*f$} (E);
          \node (I) at (.309,1.885) {$(h(gI))f$}
            edge [<-] node [l, right] {$af$} (H)
            edge [<-] node [l, above right] {$(hr^*)f$} (E)
            edge [->] node [l, below] {$a$} (C);
          \node at (1,0) {\tikz\node [rotate=-135] {$\Rightarrow$};};
          \node at (1.5,0) {$\pi$};
          \node at (-2,0) {$\Leftarrow \mu$};
          \node at (.5,2.75) {$\cong$};
          \node at (.5,-2.75) {$\cong$};
          \node at (-1.25,1.7) {\tikz\node [rotate=-90] {$\Rightarrow$};};
          \node at (-.75,1.7) {$\rho f$};
        \end{scope}
      \end{tikzpicture}% End sketch output
    \end{center}
\end{itemize}
\end{defn}
\newpage
\begin{defn}\label{braidedDEF}
  A {\bf braided} monoidal bicategory $\CC$ consists of the following data subject to the following axioms. \\
  DATA:
  \begin{itemize}
    \item A monoidal bicategory $\CC$;
    \item A pseudonatural isomorphism
      \[ b\maps A \tensor B \Rightarrow B \tensor A. \]
    \item Invertible modifications for braiding.
    \begin{center}
      \begin{tikzpicture}
%        \filldraw[white,fill=blue,fill opacity=0.1](0:2cm)--(60:2cm)--(120:2cm)--(180:2cm)--(240:2cm)--(300:2cm)--cycle;
        \node (A) at (  0:2cm) {$B(CA)$};
        \node (B) at ( 60:2cm) {$(BC)A$}
          edge [->] node [l, above right] {$a$} (A);
        \node (C) at (120:2cm) {$A(BC)$}
          edge [->] node [l, above] {$b$} (B);
        \node (D) at (180:2cm) {$(AB)C$}
          edge [->] node [l, above left] {$a$} (C);
        \node (E) at (240:2cm) {$(BA)C$}
          edge [<-] node [l, below left] {$bC$} (D);
        \node (F) at (300:2cm) {$B(AC)$}
          edge [<-] node [l, below] {$a$} (E)
          edge [->] node [l, below right] {$Bb$} (A);
        \node at (-0.25,0) {\tikz\node [rotate=-90] {$\Rightarrow$};};
        \node at (0.25,0) {$R$};
      \end{tikzpicture}
    \end{center}
    \begin{center}
      \begin{tikzpicture}
%        \filldraw[white,fill=blue,fill opacity=0.1](0:2cm)--(60:2cm)--(120:2cm)--(180:2cm)--(240:2cm)--(300:2cm)--cycle;
        \node (A) at (  0:2cm) {$(CA)B$};
        \node (B) at ( 60:2cm) {$C(AB)$}
          edge [->] node [l, above right] {$a^*$} (A);
        \node (C) at (120:2cm) {$(AB)C$}
          edge [->] node [l, above] {$b$} (B);
        \node (D) at (180:2cm) {$A(BC)$}
          edge [->] node [l, above left] {$a^*$} (C);
        \node (E) at (240:2cm) {$A(CB)$}
          edge [<-] node [l, below left] {$Ab$} (D);
        \node (F) at (300:2cm) {$(AC)B$}
          edge [<-] node [l, below] {$a^*$} (E)
          edge [->] node [l, below right] {$bB$} (A);
        \node at (-0.25,0) {\tikz\node [rotate=-90] {$\Rightarrow$};};
        \node at (0.25,0) {$S$};
      \end{tikzpicture}
    \end{center}
  \end{itemize}
  AXIOMS:
  \begin{itemize}
\item This equation governs shuffling one object $A$ and three objects $B, C, D$; for all objects $A, B, C$ and $D$ of $\CC$ the following equation holds:     
    \begin{center}
      \begin{tikzpicture}[line join=round,scale=1.25]
%        \filldraw[white,fill=red,fill opacity=0.1](1.519,2.346)--(-1.16,2.351)--(-1.798,2.636)--(-.144,3.591)--(2.009,2.629)--cycle;
%        \filldraw[white,fill=red,fill opacity=0.1](3.281,.425)--(2.791,.142)--(.894,-3.158)--(2.548,-2.203)--(3.11,-1.226)--cycle;
%        \filldraw[white,fill=red,fill opacity=0.1](-3.234,-1.214)--(-2.668,-2.194)--(-.515,-3.155)--(-2.425,.152)--(-3.062,.436)--cycle;
%        \filldraw[white,fill=green,fill opacity=0.1](3.281,.425)--(2.009,2.629)--(1.519,2.346)--(2.791,.142)--cycle;
%        \filldraw[white,fill=green,fill opacity=0.1](-3.062,.436)--(-1.798,2.636)--(-1.16,2.351)--(-2.425,.152)--cycle;
%        \filldraw[white,fill=blue,fill opacity=0.1](2.791,.142)--(1.519,2.346)--(-1.16,2.351)--(-2.425,.152)--(-.515,-3.155)--(.894,-3.158)--cycle;
        \begin{scope}[font=\fontsize{8}{8}\selectfont]
          \node (A) at (-.144,3.591) {};
          \node[anchor=south] at (A) {$(A(BC))D$};
          \node (B) at (2.009,2.629) {}
            edge [<-] node [l, above right] {$a$} (A);
          \node[anchor=west] at (B) {$A((BC)D)$};
          \node (C) at (3.281,.425) {}
            edge [<-] node [l, above right] {$b$} (B);
          \node[anchor=west] at (C) {$((BC)D)A$};
          \node (D) at (3.11,-1.226) {}
            edge [<-] node [l, right] {$a$} (C);
          \node[anchor=west] at (D) {$(BC)(DA)$};
          \node (E) at (2.548,-2.203) {}
            edge [<-] node [l, right] {$a$} (D);
          \node[anchor=north west] at (E) {$B(C(DA))$};
          \node (F) at (.894,-3.158) {}
            edge [->] node [l, below right] {$Ba$} (E);
          \node[anchor=north] at (F) {$B((CD)A)$};
          \node (G) at (-.515,-3.155) {}
            edge [->] node [l, above] {$Bb$} (F);
          \node[anchor=north] at (G) {$B(A(CD))$};
          \node (H) at (-2.668,-2.194) {}
            edge [->] node [l, below left] {$Ba$} (G);
          \node[anchor=north east] at (H) {$B((AC)D)$};
          \node (I) at (-3.234,-1.214) {}
            edge [->] node [l, left] {$a$} (H);
          \node[anchor=east] at (I) {$(B(AC))D$};
          \node (J) at (-3.062,.436) {}
            edge [->] node [l, left] {$aD$} (I);
          \node[anchor=east] at (J) {$((BA)C)D$};
          \node (K) at (-1.798,2.636) {}
            edge [->] node [l, above left] {$(bC)D$} (J)
            edge [->] node [l, above left] {$aD$} (A);
          \node[anchor=east] at (K) {$((AB)C)D$};
          \node (L) at (-1.16,2.351) {}
            edge [<-] node [l, below left] {$a$} (K);
          \node[anchor=north west] at (L) {$(AB)(CD)$};
          \node (M) at (1.519,2.346) {}
            edge [<-] node [l, above] {$a$} (L)
            edge [<-] node [l, below, sloped] {$Aa$} (B);
          \node[anchor=north east] at (M) {$A(B(CD))$};
          \node (N) at (2.791,.142) {}
            edge [<-] node [l, left] {$b$} (M)
            edge [->] node [l, left] {$a$} (F)
            edge [<-] node [l, above, sloped] {$aA$} (C);
          \node[anchor=east] at (N) {$(B(CD))A$};
          \node (O) at (-2.425,.152) {}
            edge [<-] node [l, below right] {$b(CD)$} (L)
            edge [<-] node [l, above] {$a$} (J)
            edge [->] node [l, right] {$a$} (G);
          \node[anchor=west] at (O) {$(BA)(CD)$};
          \node at (-0.25,3) {\tikz\node [rotate=-90] {$\Rightarrow$};};
          \node at (0,3) {$\pi$};
          \node at (-2,1.5) {$\cong$};
          \node at (2.3,1.5) {$\cong$};
          \node at (-2.25,-1.5) {$\Lleftarrow \pi^{-1}$};
          \node at (2.5,-1.5) {$\Lleftarrow \pi^{-1}$};
          \node at (-0,0) {\tikz\node [rotate=-135] {$\Rightarrow$};};
          \node at (0.25,0) {$R$};
        \end{scope}
      \end{tikzpicture}% End sketch output
      \\
      =
      \\
      \begin{tikzpicture}[line join=round,scale=1.25]
%        \filldraw[white,fill=green,fill opacity=0.1](3.11,-1.226)--(1.675,-.585)--(1.112,-1.562)--(2.548,-2.203)--cycle;
%        \filldraw[white,fill=blue,fill opacity=0.1](3.281,.425)--(2.009,2.629)--(-.144,3.591)--(-.235,2.722)--(1.675,-.585)--(3.11,-1.226)--cycle;
%        \filldraw[white,fill=blue,fill opacity=0.1](-3.234,-1.214)--(-2.132,-.578)--(-.235,2.722)--(-.144,3.591)--(-1.798,2.636)--(-3.062,.436)--cycle;
%        \filldraw[white,fill=red,fill opacity=0.1](-1.566,-1.557)--(-2.132,-.578)--(-.235,2.722)--(1.675,-.585)--(1.112,-1.562)--cycle;
%        \filldraw[white,fill=blue,fill opacity=0.1](2.548,-2.203)--(1.112,-1.562)--(-1.566,-1.557)--(-2.668,-2.194)--(-.515,-3.155)--(.894,-3.158)--cycle;
%        \filldraw[white,fill=green,fill opacity=0.1](-3.234,-1.214)--(-2.132,-.578)--(-1.566,-1.557)--(-2.668,-2.194)--cycle;
        \begin{scope}[font=\fontsize{8}{8}\selectfont]
          \node (A) at (-.144,3.591) {};
          \node[anchor=south] at (A) {$(A(BC))D$};
          \node (B) at (2.009,2.629) {}
            edge [<-] node [l, above right] {$a$} (A);
          \node[anchor=west] at (B) {$A((BC)D)$};
          \node (C) at (3.281,.425) {}
            edge [<-] node [l, above right] {$b$} (B);
          \node[anchor=west] at (C) {$((BC)D)A$};
          \node (D) at (3.11,-1.226) {}
            edge [<-] node [l, right] {$a$} (C);
          \node[anchor=west] at (D) {$(BC)(DA)$};
          \node (E) at (2.548,-2.203) {}
            edge [<-] node [l, right] {$a$} (D);
          \node[anchor=north west] at (E) {$B(C(DA))$};
          \node (F) at (.894,-3.158) {}
            edge [->] node [l, below right] {$Ba$} (E);
          \node[anchor=north] at (F) {$B((CD)A)$};
          \node (G) at (-.515,-3.155) {}
            edge [->] node [l, above] {$Bb$} (F);
          \node[anchor=north] at (G) {$B(A(CD))$};
          \node (H) at (-2.668,-2.194) {}
            edge [->] node [l, below left] {$Ba$} (G);
          \node[anchor=north east] at (H) {$B((AC)D)$};
          \node (I) at (-3.234,-1.214) {}
            edge [->] node [l, left] {$a$} (H);
          \node[anchor=east] at (I) {$(B(AC))D$};
          \node (J) at (-3.062,.436) {}
            edge [->] node [l, left] {$aD$} (I);
          \node[anchor=east] at (J) {$((BA)C)D$};
          \node (K) at (-1.798,2.636) {}
            edge [->] node [l, above left] {$(bC)D$} (J)
            edge [->] node [l, above left] {$aD$} (A);
          \node[anchor=east] at (K) {$((AB)C)D$};
          \node (P) at (-2.132,-.578) {}
            edge [<-] node [l, above, sloped] {$(Bb)D$} (I);
          \node[anchor=west] at (P) {$(B(CA))D$};
          \node (Q) at (-.235,2.722) {}
            edge [->] node [l, below right] {$aD$} (P)
            edge [<-] node [l, left] {$bD$} (A);
          \node[anchor=west] at (Q) {$((BC)A)D$};
          \node (R) at (1.675,-.585) {}
            edge [<-] node [l, below left] {$a$} (Q)
            edge [->] node [l, above, sloped] {$(BC)b$} (D);
          \node[anchor=east] at (R) {$(BC)(AD)$};
          \node (S) at (1.112,-1.562) {}
            edge [<-] node [l, below right] {$a$} (R)
            edge [->] node [l, above, sloped] {$B(Cb)$} (E);
          \node[anchor=south east] at (S) {$B(C(AD))$};
          \node (T) at (-1.566,-1.557) {}
            edge [->] node [l, below] {$Ba$} (S)
            edge [<-] node [l, below left] {$a$} (P)
            edge [<-] node [l, above, sloped] {$B(bD)$} (H);
          \node[anchor=south west] at (T) {$B((CA)D)$};
          \node at (-0.25,0) {\tikz\node [rotate=-135] {$\Rightarrow$};};
          \node at (0.25,0) {$\pi^{-1}$};
          \node at (2,1) {$\Lleftarrow R$};
          \node at (-2.25,1) {\tikz\node [rotate=-135] {$\Rightarrow$};};
          \node at (-2,1) {$RD$};
          \node at (2.25,-1.25) {$\cong$};
          \node at (-2.5,-1.25) {$\cong$};
          \node at (-0.5,-2.5) {\tikz\node [rotate=-90] {$\Rightarrow$};};
          \node at (0,-2.5) {$BR^{-1}$};
        \end{scope}
      \end{tikzpicture}% End sketch output
    \end{center}
\newpage
    \item This equation governs shuffling three objects $A,B,C$ and one object $D$; for all objects $A, B, C$ and $D$ of $\CC$ the following equation holds: \\
    \begin{center}
      \begin{tikzpicture}[line join=round,scale=1.25]
%        \filldraw[white,fill=red,fill opacity=0.1](1.519,2.346)--(-1.16,2.351)--(-1.798,2.636)--(-.144,3.591)--(2.009,2.629)--cycle;
%        \filldraw[white,fill=red,fill opacity=0.1](3.281,.425)--(2.791,.142)--(.894,-3.158)--(2.548,-2.203)--(3.11,-1.226)--cycle;
%        \filldraw[white,fill=red,fill opacity=0.1](-3.234,-1.214)--(-2.668,-2.194)--(-.515,-3.155)--(-2.425,.152)--(-3.062,.436)--cycle;
%        \filldraw[white,fill=green,fill opacity=0.1](3.281,.425)--(2.009,2.629)--(1.519,2.346)--(2.791,.142)--cycle;
%        \filldraw[white,fill=green,fill opacity=0.1](-3.062,.436)--(-1.798,2.636)--(-1.16,2.351)--(-2.425,.152)--cycle;
%        \filldraw[white,fill=blue,fill opacity=0.1](2.791,.142)--(1.519,2.346)--(-1.16,2.351)--(-2.425,.152)--(-.515,-3.155)--(.894,-3.158)--cycle;
        \begin{scope}[font=\fontsize{8}{8}\selectfont]
          \node (A) at (-.144,3.591) {};
          \node[anchor=south] at (A) {$A((BC)D)$};
          \node (B) at (2.009,2.629) {}
            edge [<-] node [l, above right] {$a^*$} (A);
          \node[anchor=west] at (B) {$(A(BC))D$};
          \node (C) at (3.281,.425) {}
            edge [<-] node [l, above right] {$b$} (B);
          \node[anchor=west] at (C) {$D(A(BC))$};
          \node (D) at (3.11,-1.226) {}
            edge [<-] node [l, right] {$a^*$} (C);
          \node[anchor=west] at (D) {$(DA)(BC)$};
          \node (E) at (2.548,-2.203) {}
            edge [<-] node [l, right] {$a^*$} (D);
          \node[anchor=north west] at (E) {$((DA)B)C$};
          \node (F) at (.894,-3.158) {}
            edge [->] node [l, below right] {$a^*C$} (E);
          \node[anchor=north] at (F) {$(D(AB))C$};
          \node (G) at (-.515,-3.155) {}
            edge [->] node [l, above] {$bC$} (F);
          \node[anchor=north] at (G) {$((AB)D)C$};
          \node (H) at (-2.668,-2.194) {}
            edge [->] node [l, below left] {$a^*C$} (G);
          \node[anchor=north east] at (H) {$(A(BD))C$};
          \node (I) at (-3.234,-1.214) {}
            edge [->] node [l, left] {$a^*$} (H);
          \node[anchor=east] at (I) {$A((BD)C)$};
          \node (J) at (-3.062,.436) {}
            edge [->] node [l, left] {$Aa^*$} (I);
          \node[anchor=east] at (J) {$A(B(DC))$};
          \node (K) at (-1.798,2.636) {}
            edge [->] node [l, above left] {$a(Bb)$} (J)
            edge [->] node [l, above left] {$Aa^*$} (A);
          \node[anchor=east] at (K) {$A(B(CD))$};
          \node (L) at (-1.16,2.351) {}
            edge [<-] node [l, below left] {$a^*$} (K);
          \node[anchor=north west] at (L) {$(AB)(CD)$};
          \node (M) at (1.519,2.346) {}
            edge [<-] node [l, above] {$a^*$} (L)
            edge [<-] node [l, below, sloped] {$a^*D$} (B);
          \node[anchor=north east] at (M) {$((AB)C)D$};
          \node (N) at (2.791,.142) {}
            edge [<-] node [l, left] {$b$} (M)
            edge [->] node [l, left] {$a^*$} (F)
            edge [<-] node [l, above, sloped] {$Da^*$} (C);
          \node[anchor=east] at (N) {$D((AB)C)$};
          \node (O) at (-2.425,.152) {}
            edge [<-] node [l, below right] {$(AB)b$} (L)
            edge [<-] node [l, above] {$a^*$} (J)
            edge [->] node [l, right] {$a^*$} (G);
          \node[anchor=west] at (O) {$(AB)(DC)$};
          \node at (-0.25,3) {\tikz\node [rotate=-90] {$\Rightarrow$};};
          \node at (0,3) {$\pi^*$};
          \node at (-2,1.5) {$\cong$};
          \node at (2.3,1.5) {$\cong$};
          \node at (-2.25,-1.5) {$\Leftarrow \pi^{*-1}$};
          \node at (2.5,-1.5) {$\Leftarrow \pi^{*-1}$};
          \node at (-0,0) {\tikz\node [rotate=-135] {$\Rightarrow$};};
          \node at (0.25,0) {$S$};
        \end{scope}
      \end{tikzpicture}% End sketch output
      \\
      =
      \\
      \begin{tikzpicture}[line join=round,scale=1.25]
%        \filldraw[white,fill=green,fill opacity=0.1](3.11,-1.226)--(1.675,-.585)--(1.112,-1.562)--(2.548,-2.203)--cycle;
%        \filldraw[white,fill=blue,fill opacity=0.1](3.281,.425)--(2.009,2.629)--(-.144,3.591)--(-.235,2.722)--(1.675,-.585)--(3.11,-1.226)--cycle;
%        \filldraw[white,fill=blue,fill opacity=0.1](-3.234,-1.214)--(-2.132,-.578)--(-.235,2.722)--(-.144,3.591)--(-1.798,2.636)--(-3.062,.436)--cycle;
%        \filldraw[white,fill=red,fill opacity=0.1](-1.566,-1.557)--(-2.132,-.578)--(-.235,2.722)--(1.675,-.585)--(1.112,-1.562)--cycle;
%        \filldraw[white,fill=blue,fill opacity=0.1](2.548,-2.203)--(1.112,-1.562)--(-1.566,-1.557)--(-2.668,-2.194)--(-.515,-3.155)--(.894,-3.158)--cycle;
%        \filldraw[white,fill=green,fill opacity=0.1](-3.234,-1.214)--(-2.132,-.578)--(-1.566,-1.557)--(-2.668,-2.194)--cycle;
        \begin{scope}[font=\fontsize{8}{8}\selectfont]
          \node (A) at (-.144,3.591) {};
          \node[anchor=south] at (A) {$A((BC)D)$};
          \node (B) at (2.009,2.629) {}
            edge [<-] node [l, above right] {$a^*$} (A);
          \node[anchor=west] at (B) {$(A(BC))D$};
          \node (C) at (3.281,.425) {}
            edge [<-] node [l, above right] {$b$} (B);
          \node[anchor=west] at (C) {$D(A(BC))$};
          \node (D) at (3.11,-1.226) {}
            edge [<-] node [l, right] {$a^*$} (C);
          \node[anchor=west] at (D) {$(DA)(BC)$};
          \node (E) at (2.548,-2.203) {}
            edge [<-] node [l, right] {$a^*$} (D);
          \node[anchor=north west] at (E) {$((DA)B)C$};
          \node (F) at (.894,-3.158) {}
            edge [->] node [l, below right] {$a^*C$} (E);
          \node[anchor=north] at (F) {$(D(AB))C$};
          \node (G) at (-.515,-3.155) {}
            edge [->] node [l, above] {$bC$} (F);
          \node[anchor=north] at (G) {$((AB)D)C$};
          \node (H) at (-2.668,-2.194) {}
            edge [->] node [l, below left] {$a^*C$} (G);
          \node[anchor=north east] at (H) {$(A(BD))C$};
          \node (I) at (-3.234,-1.214) {}
            edge [->] node [l, left] {$a^*$} (H);
          \node[anchor=east] at (I) {$A((BD)C)$};
          \node (J) at (-3.062,.436) {}
            edge [->] node [l, left] {$Aa$} (I);
          \node[anchor=east] at (J) {$A(B(DC))$};
          \node (K) at (-1.798,2.636) {}
            edge [->] node [l, above left] {$A(Bb)$} (J)
            edge [->] node [l, above left] {$Aa^*$} (A);
          \node[anchor=east] at (K) {$A(B(CD))$};
          \node (P) at (-2.132,-.578) {}
            edge [<-] node [l, above, sloped] {$A(bC)$} (I);
          \node[anchor=west] at (P) {$A((DB)C)$};
          \node (Q) at (-.235,2.722) {}
            edge [->] node [l, below right] {$Aa^*$} (P)
            edge [<-] node [l, left] {$Ab$} (A);
          \node[anchor=west] at (Q) {$A(D(BC))$};
          \node (R) at (1.675,-.585) {}
            edge [<-] node [l, below left] {$a^*$} (Q)
            edge [->] node [l, above, sloped] {$b(BC)$} (D);
          \node[anchor=east] at (R) {$(AD)(BC)$};
          \node (S) at (1.112,-1.562) {}
            edge [<-] node [l, below right] {$a^*$} (R)
            edge [->] node [l, above, sloped] {$(bB)C$} (E);
          \node[anchor=south east] at (S) {$((AD)B)C$};
          \node (T) at (-1.566,-1.557) {}
            edge [->] node [l, below] {$a^*C$} (S)
            edge [<-] node [l, below left] {$a^*$} (P)
            edge [<-] node [l, above, sloped] {$(Ab)C$} (H);
          \node[anchor=south west] at (T) {$(A(DB))C$};
          \node at (-0.25,0) {\tikz\node [rotate=-135] {$\Rightarrow$};};
          \node at (0.25,0) {$\pi^{*-1}$};
          \node at (2,1) {$\Leftarrow S$};
          \node at (-2.25,1) {\tikz\node [rotate=-135] {$\Rightarrow$};};
          \node at (-2,1) {$AS$};
          \node at (2.25,-1.25) {$\cong$};
          \node at (-2.5,-1.25) {$\cong$};
          \node at (-0.5,-2.5) {\tikz\node [rotate=-90] {$\Rightarrow$};};
          \node at (0,-2.5) {$S^{-1}C$};
        \end{scope}
      \end{tikzpicture}% End sketch output
    \end{center}

    \item 
This equation governs shuffling two objects $A,B$ and two other objects $C,D$; For all objects $A, B, C$ and $D$ of $\CC$ the following equation holds: \\
    \begin{center}
      \begin{tikzpicture}[line join=round, scale=2, font=\fontsize{5}{5}\selectfont]
%        \filldraw[white,fill=blue,fill opacity=0.1](-.453,-1.75)--(-2.308,-.977)--(-1.412,-.174)--(1.412,-.174)--(2.308,-.977)--(.453,-1.75)--cycle;
%        \filldraw[white,fill=blue,fill opacity=0.1](.492,2.021)--(-.492,2.021)--(-2.405,.899)--(-1.412,-.174)--(1.412,-.174)--(2.405,.899)--cycle;
%        \filldraw[white,fill=red,fill opacity=0.1](2.625,.433)--(2.405,.899)--(1.412,-.174)--(2.308,-.977)--(2.573,-.48)--cycle;
%        \filldraw[white,fill=red,fill opacity=0.1](-2.625,.433)--(-2.405,.899)--(-1.412,-.174)--(-2.308,-.977)--(-2.573,-.48)--cycle;
        \node (A) at (-2.573,-.48) {};
        \node [left] at (A) {$(A(BC))D$};

        \node (B) at (-2.625,.433) {}
          edge [<-] node [l, left] {$a$} (A);
        \node [left] at (B) {$A((BC)D)$};

        \node (C) at (-2.405,.899) {}
          edge [<-] node [l, left] {$Aa$} (B);
        \node [left] at (C) {$A(B(CD))$};

        \node (D) at (-.492,2.021) {}
          edge [<-] node [l, above left] {$Ab$} (C);
        \node [above] at (D) {$A((CD)B)$};

        \node (E) at (.492,2.021) {}
          edge [<-] node [l, above] {$a^*$} (D);
        \node [above] at (E) {$(A(CD))B$};

        \node (F) at (2.405,.899) {}
          edge [<-] node [l, above right] {$bB$} (E);
        \node [right] at (F) {$((CD)A)B$};

        \node (G) at (2.625,.433) {}
          edge [<-] node [l, right] {$aB$} (F);
        \node [right] at (G) {$(C(DA))B$};

        \node (H) at (2.573,-.48) {}
          edge [<-] node [l, right] {$a$} (G);
        \node [right] at (H) {$C((DA)B)$};

        \node (I) at (2.308,-.977) {}
          edge [->] node [l, right] {$Ca^*$} (H);
        \node [right] at (I) {$C(D(AB))$};

        \node (J) at (.453,-1.75) {}
          edge [->] node [l, below right] {$Cb$} (I);
        \node [below] at (J) {$C((AB)D)$};

        \node (K) at (-.453,-1.75) {}
          edge [->] node [l, below] {$a$} (J);
        \node [below] at (K) {$(C(AB))D$};

        \node (L) at (-2.308,-.977) {}
          edge [->] node [l, below left] {$bD$} (K)
          edge [<-] node [l, left] {$a^*D$} (A);
        \node [left] at (L) {$((AB)C)D$};

        \node (M) at (-1.412,-.174) {$(AB)(CD)$}
          edge [<-] node [l, below right] {$a$} (L)
          edge [<-] node [l, above right] {$a^*$} (C);

        \node (N) at (1.412,-.174) {$(CD)(AB)$}
          edge [<-] node [l, above] {$b$} (M)
          edge [->] node [l, below left] {$a$} (I)
          edge [->] node [l, above left] {$a^*$} (F);
        \begin{scope}[font=\fontsize{10}{10}\selectfont]
          \node at (-2.2,0) {\tikz\node [rotate=90] {$\Rightarrow$};};
          \node at (-2,0) {$\pi_1$};
          \node at (0,1) {\tikz\node [rotate=90] {$\Rightarrow$};};
          \node at (.2,1) {$S$};
          \node at (0,-1) {\tikz\node [rotate=90] {$\Rightarrow$};};
          \node at (.2,-1) {$R$};
          \node at (2,0) {\tikz\node [rotate=90] {$\Rightarrow$};};
          \node at (2.2,0) {$\pi_2$};
        \end{scope}
      \end{tikzpicture}% End sketch output
      \\ = \\
      \begin{tikzpicture}[line join=round, scale=2, font=\fontsize{5}{5}\selectfont]

        \node (A) at (-2.573,-.48) {};
        \node [left] at (A) {$(A(BC))D$};

        \node (B) at (-2.625,.433) {}
          edge [<-] node [l, left] {$a$} (A);
        \node [left] at (B) {$A((BC)D)$};

        \node (C) at (-2.405,.899) {}
          edge [<-] node [l, left] {$Aa$} (B);
        \node [left] at (C) {$A(B(CD))$};

        \node (D) at (-.492,2.021) {}
          edge [<-] node [l, above left] {$Ab$} (C);
        \node [above] at (D) {$A((CD)B)$};

        \node (E) at (.492,2.021) {}
          edge [<-] node [l, above] {$a^*$} (D);
        \node [above] at (E) {$(A(CD))B$};

        \node (F) at (2.405,.899) {}
          edge [<-] node [l, above right] {$bB$} (E);
        \node [right] at (F) {$((CD)A)B$};

        \node (G) at (2.625,.433) {}
          edge [<-] node [l, right] {$aB$} (F);
        \node [right] at (G) {$(C(DA))B$};

        \node (H) at (2.573,-.48) {}
          edge [<-] node [l, right] {$a$} (G);
        \node [right] at (H) {$C((DA)B)$};

        \node (I) at (2.308,-.977) {}
          edge [->] node [l, right] {$Ca^*$} (H);
        \node [right] at (I) {$C(D(AB))$};

        \node (J) at (.453,-1.75) {}
          edge [->] node [l, below right] {$Cb$} (I);
        \node [below] at (J) {$C((AB)D)$};

        \node (K) at (-.453,-1.75) {}
          edge [->] node [l, below] {$a$} (J);
        \node [below] at (K) {$(C(AB))D$};

        \node (L) at (-2.308,-.977) {}
          edge [->] node [l, below left] {$bD$} (K)
          edge [<-] node [l, left] {$a^*D$} (A);
        \node [left] at (L) {$((AB)C)D$};

        \node (O) at (-2.075,-.32) {}
          edge [<-] node [l, below, sloped] {$(Ab)D$} (A);
        \node [above right] at (O) {$(A(CB))D$};

        \node (P) at (-2.109,.417) {}
          edge [<-] node [l, left] {$a$} (O)
          edge [<-] node [l, above, sloped] {$A(bD)$} (B);
        \node [below right] at (P) {$A((BC)D)$};

        \node (Q) at (-1.72,.642) {}
          edge [<-] node [l, above, sloped] {$Aa$} (P);
        \node [right] at (Q) {$A(C(BD))$};

        \node (R) at (-.944,.116) {$(AC)(BD)$}
          edge [<-] node [l, above right] {$a^*$} (Q);

        \node (S) at (-1.678,-.488) {}
          edge [->] node [l, below right] {$a$} (R)
          edge [<-] node [l, below, sloped] {$a^*D$} (O);
        \node [right] at (S) {$((AC)B)D$};

        \node (T) at (-.546,-1.564) {}
          edge [<-] node [l, above, sloped] {$(bB)D$} (S)
          edge [<-] node [right] {$a^*D$} (K);
        \node at (T) {$((CA)B)D$};
        
        \node (U) at (0,-.807) {$(CA)(BD)$}
          edge [<-] node [l, above left] {$a$} (T)
          edge [<-] node [l, above, sloped] {$b(BD)$} (R);

        \node (V) at (.546,-1.564) {}
          edge [<-] node [l, above right] {$a$} (U)
          edge [->] node [left] {$Ca^*$} (J);
        \node at (V) {$C(A(BD))$};

        \node (W) at (1.678,-.488) {}
          edge [<-] node [l, above, sloped] {$C(Ab)$} (V);
        \node [left] at (W) {$C(A(DB))$};

        \node (X) at (2.075,-.32) {}
          edge [<-] node [l, below, sloped] {$Ca^*$} (W)
          edge [->] node [l, below, sloped] {$C(bB)$} (H);
        \node [above left] at (X) {$C((AD)B)$};

        \node (Y) at (2.109,.417) {}
          edge [->] node [l, right] {$a$} (X)
          edge [->] node [l, above, sloped] {$(Cb)B$} (G);
        \node [below left] at (Y) {$(C(AD))B$};

        \node (Z) at (1.72,.642) {}
          edge [->] node [l, above, sloped] {$aB$} (Y);
        \node [left] at (Z) {$((CA)D)B$};

        \node (A') at (.944,.116) {$(CA)(DB)$}
          edge [->] node [l, above left] {$a^*$} (Z)
          edge [->] node [l, below left] {$a$} (W)
          edge [<-] node [l, above, sloped] {$(CA)b$} (U);

        \node (B') at (0,1.079) {$(AC)(DB)$}
          edge [->] node [l, below, sloped] {$b(DB)$} (A')
          edge [<-] node [l, below, sloped] {$(AC)b$} (R);

        \node (C') at (-.588,1.829) {}
          edge [->] node [l, below left] {$a^*$} (B')
          edge [<-] node [l, below, sloped] {$A(Cb)$} (Q)
          edge [<-] node [right] {$Aa$} (D);
        \node at (C') {$A(C(DB))$};

        \node (D') at (.588,1.829) {}
          edge [<-] node [l, below right] {$a^*$} (B')
          edge [->] node [l, below, sloped] {$(bD)B$} (Z)
          edge [->] node [left] {$aB$} (E);
        \node at (D') {$((AC)D)B$};

        \begin{scope} [font=\fontsize{10}{10}\selectfont]
          \node at (-2.4,0) {$\cong$};
          \node at (-1.7,0) {\tikz\node [rotate=90] {$\Rightarrow$};};
          \node at (-1.5,0) {$\pi_1$};
          \node at (-2,-.8) {\tikz\node [rotate=90] {$\Rightarrow$};};
          \node at (-1.8,-.8) {$SD$};
          \node at (-2,.9) {\tikz\node [rotate=90] {$\Rightarrow$};};
          \node at (-1.75,.9) {$AR^{-1}$};
          \node at (-0.1,1.6) {\tikz\node [rotate=90] {$\Rightarrow$};};
          \node at (0.1,1.6) {$\pi_4$};
          \node at (-.75,.75) {$\cong$};
          \node at (-.75,-.75) {$\cong$};
          \node at (0,0) {$\cong$};
          \node at (.75,-.75) {$\cong$};
          \node at (.75,.75) {$\cong$};
          \node at (-0.1,-1.5) {\tikz\node [rotate=90] {$\Rightarrow$};};
          \node at (0.1,-1.5) {$\pi_3$};
          \node at (1.8,.9) {\tikz\node [rotate=90] {$\Rightarrow$};};
          \node at (2.05,.9) {$R^{-1}B$};
          \node at (1.8,-.8) {\tikz\node [rotate=90] {$\Rightarrow$};};
          \node at (2, -.8) {$CS$};
          \node at (1.5,0) {\tikz\node [rotate=90] {$\Rightarrow$};};
          \node at (1.7,0) {$\pi_2$};
          \node at (2.4,0) {$\cong$};
        \end{scope}
      \end{tikzpicture}% End sketch output
    \end{center}
    \item If the tensor product were associative, the Yang-Baxter equations would hold:
    \begin{center}
      \begin{tikzpicture}
        \begin{scope}[scale=.3, font=\fontsize{8}{8}\selectfont]
          \node (ACB) at (5,0) {ACB};
          \node (ABC) at (0,3) {ABC}
            edge [->] node [l, below left] {$Ab$} (ACB);
          \node (CAB) at (10,3) {CAB}
            edge [<-] node [l, below right] {$b B$} (ACB);
          \node (BAC) at (0,9) {BAC}
            edge [<-] node [l, left] {$b C$} (ABC);
          \node (CBA) at (10,9) {CBA}
            edge [<-] node [l, above left] {$b$} (ACB)
            edge [<-] node [l, right] {$Cb$} (CAB);
          \node (BCA) at (5,12) {BCA}
            edge [<-] node [l, above left] {$Bb$} (BAC)
            edge [<-] node [l, below right] {$b$} (ABC)
            edge [->] node [l, above right] {$b A$} (CBA);

          \node (EQ) at (13,6) {=};
        \end{scope}
      \end{tikzpicture}
      \begin{tikzpicture}
        \begin{scope}[scale=.3, font=\fontsize{8}{8}\selectfont]
          \node (ACB) at (5,0) {ACB};
          \node (ABC) at (0,3) {ABC}
            edge [->] node [l, below left] {$Ab$} (ACB);
          \node (CAB) at (10,3) {CAB}
            edge [<-] node [l, above] {$b$} (ABC)
            edge [<-] node [l, below right] {$b B$} (ACB);
          \node (BAC) at (0,9) {BAC}
            edge [<-] node [l, left] {$b C$} (ABC);
          \node (CBA) at (10,9) {CBA}
            edge [<-] node [l, below] {$b$} (BAC)
            edge [<-] node [l, right] {$Cb$} (CAB);
          \node (BCA) at (5,12) {BCA}
            edge [<-] node [l, above left] {$Bb$} (BAC)
            edge [->] node [l, above right] {$b A$} (CBA);
        \end{scope}
      \end{tikzpicture}
    \end{center}
    
    Again, relaxing the associativity truncates all the corners and some of the edges.  For all objects $A, B$ and $C$ of $\CC$ the following equation holds:\\
    \begin{center}
      \begin{tikzpicture}
        \begin{scope}[scale=.3, font=\fontsize{7}{7}\selectfont]
%          \filldraw[white,fill=blue,fill opacity=0.1] (0,8)--(0,16)--(2,20)--(8,24)--(10,20)--(4,8)--cycle;
%          \filldraw[white,fill=green,fill opacity=0.1] (10,20)--(4,8)--(10,4)--(16,16)--cycle;
%          \filldraw[white,fill=green,fill opacity=0.1] (10,20)--(12,24)--(18,20)--(16,16)--cycle;
%          \filldraw[white,fill=green,fill opacity=0.1] (4,8)--(10,4)--(8,0)--(2,4)--cycle;
%          \filldraw[white,fill=blue,fill opacity=0.1] (20,16)--(20,8)--(18,4)--(12,0)--(10,4)--(16,16)--cycle;
%          \filldraw[white,fill=green,fill opacity=0.1] (2,4)--(4,8)--(0,8)--cycle;
%          \filldraw[white,fill=yellow,fill opacity=0.1] (10,20)--(12,24)--(8,24)--cycle;
%          \filldraw[white,fill=green,fill opacity=0.1] (16,16)--(20,16)--(18,20)--cycle;
%          \filldraw[white,fill=yellow,fill opacity=0.1] (8,0)--(12,0)--(10,4)--cycle;
          \node (ACB1) at (8, 0) {};
          \node [below left] at (ACB1) {$A(CB)$};
          \node (ACB2) at (12, 0) {}
            edge [<-] node [l, below] {$a^*$} (ACB1);
          \node [below right] at (ACB2) {$(AC)B$};
          \node (ACB3) at (10, 4) {A(CB)}
            edge [<-] node [l, above left] {$1$} (ACB1)
            edge [<-] node [l, above right] {$a$} (ACB2);
          \node (ABC1) at (2, 4) {}
            edge [->] node [l, below left] {$Ab$} (ACB1);
          \node [below left] at (ABC1) {$A(BC)$};
          \node (ABC2) at (4, 8) {A(BC)}
            edge [<-] node [l, below right] {$1$} (ABC1)
            edge [->] node [l, above right] {$Ab$} (ACB3);
          \node (ABC3) at (0,8) {}
            edge [->] node [l, above] {$a$} (ABC2)
            edge [->] node [l, below left] {$a$} (ABC1);
          \node [left] at (ABC3) {$(AB)C$};
          \node (CAB1) at (18, 4) {}
            edge [<-] node [l, below right] {$b B$} (ACB2);
          \node [right] at (CAB1) {$(CA)B$};
          \node (CAB2) at (20, 8) {}
            edge [<-] node [l, below right] {$a$} (CAB1);
          \node [right] at (CAB2) {$C(AB)$};
          \node (BAC1) at (0, 16) {}
            edge [<-] node [l, left] {$b C$} (ABC3);
          \node [left] at (BAC1) {$(BA)C$};
          \node (BAC3) at (2, 20) {}
            edge [<-] node [l, above left] {$a$} (BAC1);
          \node [above left] at (BAC3) {$B(AC)$};
          \node (CBA1) at (16, 16) {(CB)A}
            edge [<-] node [l, below right] {$b$} (ACB3);
          \node (CBA2) at (20, 16) {}
            edge [<-] node [l, below] {$a$} (CBA1)
            edge [<-] node [l, right] {$Cb$} (CAB2);
          \node [right] at (CBA2) {$C(BA)$};
          \node (CBA3) at (18, 20) {}
            edge [<-] node [l, above left] {$1$} (CBA1)
            edge [->] node [l, above right] {$a$} (CBA2);
          \node [above right] at (CBA3) {$(CB)A$};
          \node (BCA1) at (10, 20) {(BC)A}
            edge [<-] node [l, above left] {$b$} (ABC2)
            edge [->] node [l, below left] {$b A$} (CBA1);
          \node (BCA2) at (12, 24) {}
            edge [<-] node [l, below right] {$1$} (BCA1)
            edge [->] node [l, above right] {$b A$} (CBA3);
          \node [above right] at (BCA2) {$(BC)A$};
          \node (BCA3) at (8, 24) {}
            edge [<-] node [l, above left] {$Bb$} (BAC3)
            edge [<-] node [l, below left] {$a$} (BCA1)
            edge [->] node [l, above] {$a^*$} (BCA2);
          \node [above left] at (BCA3) {$B(CA)$};
            
          \node (R1A) at (4, 16) {\tikz\node [rotate=-45] {$\Rightarrow$};};
          \node (R1B) at (5.5, 16) {$R^{-1}$};
          \node (R2A) at (16, 8) {\tikz\node [rotate=-45] {$\Rightarrow$};};
          \node (R2B) at (17, 8) {$R$};
          \node (N1) at (14, 20) {$\cong$};
          \node (N2) at (10, 12) {$\cong$};
          \node (N3) at (6, 4) {$\cong$};
          \node (D1) at (18, 17.5) {\tikz\node [rotate=-90] {$\Rightarrow$};};
          \node at (18.5,17.5) {$1$};
          \node (D2) at (2, 6.5) {\tikz\node [rotate=-90] {$\Rightarrow$};};
          \node at (2.5,6.5) {$1$};
          \node (E1A) at (10, 22.5) {\tikz\node [rotate=-45] {$\Rightarrow$};};
          \node (E1B) at (10.5, 22.5) {$\epsilon$};
          \node (E2A) at (10, 1.5) {\tikz\node [rotate=-45] {$\Rightarrow$};};
          \node (E2B) at (10.5, 1.5) {$\eta$};
        \end{scope}
      \end{tikzpicture}
      \\ = \\
      \begin{tikzpicture}
        \begin{scope}[scale=.3, font=\fontsize{7}{7}\selectfont]
%          \filldraw[white,fill=blue,fill opacity=0.1] (4, 16)--(2, 20)--(8, 24)--(12, 24)--(18,20)--(16,16)--cycle;
%          \filldraw[white,fill=blue,fill opacity=0.1] (4, 8)--(2, 4)--(8, 0)--(12, 0)--(18,4)--(16,8)--cycle;
%          \filldraw[white,fill=green,fill opacity=0.1] (0,8)--(20,8)--(20,16)--(0,16)--cycle;
%          \filldraw[white,fill=yellow,fill opacity=0.1] (0,16)--(4,16)--(2,20)--cycle;
%          \filldraw[white,fill=yellow,fill opacity=0.1] (18,4)--(20,8)--(16,8)--cycle;
%          \filldraw[white,fill=yellow,fill opacity=0.1] (2,4)--(4,8)--(0,8)--cycle;
%          \filldraw[white,fill=yellow,fill opacity=0.1] (16,16)--(20,16)--(18,20)--cycle;

          \node (ACB1) at (8, 0) {};
          \node [below left] at (ACB1) {$A(CB)$};
          \node (ACB2) at (12, 0) {}
            edge [<-] node [l, below] {$a^*$} (ACB1);
          \node [below right] at (ACB2) {$(AC)B$};
          \node (ABC1) at (2, 4) {}
            edge [->] node [l, below left] {$Ab$} (ACB1);
          \node [below left] at (ABC1) {$A(BC)$};
          \node (ABC2) at (4, 8) {A(BC)}
            edge [<-] node [l, below right] {$a^*$} (ABC1);
          \node (ABC3) at (0,8) {}
            edge [->] node [l, above] {$1$} (ABC2)
            edge [->] node [l, below left] {$a$} (ABC1);
          \node [left] at (ABC3) {$(AB)C$};
          \node (CAB1) at (18, 4) {}
            edge [<-] node [l, below right] {$b B$} (ACB2);
          \node [right] at (CAB1) {$(CA)B$};
          \node (CAB2) at (20, 8) {}
            edge [<-] node [l, below right] {$a$} (CAB1);
          \node [right] at (CAB2) {$C(AB)$};
          \node (CAB3) at (16, 8) {C(AB)}
            edge [<-] node [l, below] {$b$} (ABC2)
            edge [->] node [l, above] {$1$} (CAB2)
            edge [->] node [l, below left] {$a^*$} (CAB1);
          \node (BAC1) at (0, 16) {}
            edge [<-] node [l, left] {$b C$} (ABC3);
          \node [left] at (BAC1) {$(BA)C$};
          \node (BAC2) at (4, 16) {(BA)C}
            edge [<-] node [l, below] {$1$} (BAC1)
            edge [<-] node [l, right] {$b C$} (ABC2);
          \node (BAC3) at (2, 20) {}
            edge [<-] node [l, above left] {$a$} (BAC1)
            edge [->] node [l, above right] {$a^*$} (BAC2);
          \node [above left] at (BAC3) {$B(AC)$};
          \node (CBA1) at (16, 16) {C(BA)}
            edge [<-] node [l, above] {$b$} (BAC2)
            edge [<-] node [l, left] {$C b$} (CAB3);
          \node (CBA2) at (20, 16) {}
            edge [<-] node [l, below] {$1$} (CBA1)
            edge [<-] node [l, right] {$C b$} (CAB2);
          \node [right] at (CBA2) {$C(BA)$};
          \node (CBA3) at (18, 20) {}
            edge [<-] node [l, above left] {$a^*$} (CBA1)
            edge [->] node [l, above right] {$a$} (CBA2);
          \node [above right] at (CBA3) {$(CB)A$};
          \node (BCA2) at (12, 24) {}
            edge [->] node [l, above right] {$b A$} (CBA3);
          \node [above right] at (BCA2) {$(BC)A$};
          \node (BCA3) at (8, 24) {}
            edge [<-] node [l, above left] {$Bb$} (BAC3)
            edge [->] node [l, above] {$a^*$} (BCA2);
          \node [above left] at (BCA3) {$B(CA)$};

          \node (S1) at (9.5, 20) {\tikz\node [rotate=-90] {$\Rightarrow$};};
          \node at (11,20) {$S^{-1}$};
          \node (S2) at (10, 4) {\tikz\node [rotate=-90] {$\Rightarrow$};};
          \node at (11,4) {$S$};
          \node (N1) at (2, 12) {$\cong$};
          \node (N1) at (10, 12) {$\cong$};
          \node (N1) at (18, 12) {$\cong$};
          \node (E1) at (2, 17.5) {\tikz\node [rotate=-90] {$\Rightarrow$};};
          \node at (2.5,17.5) {$\epsilon$};
          \node (E2) at (1.5, 6.5) {\tikz\node [rotate=-90] {$\Rightarrow$};};
          \node at (2.5,6.5) {$\epsilon^{-1}$};
          \node (E3) at (17.5, 17) {\tikz\node [rotate=-90] {$\Rightarrow$};};
          \node at (18.5,17) {$\eta^{-1}$};
          \node (E4) at (18, 6.5) {\tikz\node [rotate=-90] {$\Rightarrow$};};
          \node at (18.5,6.5) {$\eta$};
        \end{scope}
      \end{tikzpicture}
    \end{center}
  \end{itemize}
\end{defn}

\newpage

\end{document}